\documentclass[10pt]{amsart}

\oddsidemargin=.6 truecm
\evensidemargin=.6 truecm
\textwidth=15 truecm

\usepackage{eucal,url,amssymb,stmaryrd,enumerate,amscd,}
\usepackage[pagebackref,colorlinks=true]{hyperref}
\usepackage{amsfonts}
\usepackage{amsmath,amsthm,amssymb,amscd,enumerate,eucal,url,stmaryrd,verbatim}

\setcounter{MaxMatrixCols}{10}

\setlength{\marginparwidth}{.8in} \setlength{\textheight}{8in}
\setlength{\oddsidemargin}{0.35in} \setlength{\evensidemargin}{0.35in}
\setlength{\textwidth}{6.3in} \setlength{\topmargin}{-0.13in}
\setlength{\headheight}{0.18in} \setlength{\marginparwidth}{1.0in}

\numberwithin{equation}{section}
\newtheorem{thrm}{Theorem}[section]
\newtheorem{lemma}[thrm]{Lemma}
\newtheorem{prop}[thrm]{Proposition}
\newtheorem{cor}[thrm]{Corollary}
\newtheorem{dfn}[thrm]{Definition}

\newtheorem{rmrk}[thrm]{Remark}

\newtheorem{conv}[thrm]{Convention}
\setlength{\oddsidemargin}{0.3in}
\setlength{\evensidemargin}{0.3in}
\setlength{\textwidth}{6.2in}
\setlength{\rightmargin}{0.7in}
\setlength{\leftmargin}{-1.5in}
\setlength{\textheight}{8.7in}

\newcommand{\f}{\omega}
\newcommand{\C}{\nabla}


\newcommand{\be}{\begin{equation}}
\newcommand{\ee}{\end{equation}}
\newcommand{\ta}{T^a}
\newcommand{\tb}{T^{sym}}

\newcommand{\dx}{\partial/\partial x}
\newcommand{\dy}{\partial/\partial y}
\newcommand{\dz}{\partial/\partial z}

\newcommand{\dta}{\partial/\partial t_{a}}
\newcommand{\dxa}{\partial/\partial x_{a}}
\newcommand{\dya}{\partial/\partial y_{a}}
\newcommand{\dza}{\partial/\partial z_{a}}








\title[Conformal paraquaternionic contact curvature and the local flatness theorem]
{Conformal paraquaternionic contact curvature and the local flatness theorem}

\begin{document}

\begin{abstract}
A tensor invariant is defined on a paraquaternionic contact  manifold in terms of the
curvature and torsion of the canonical paraquaternionic connection involving  derivatives up to
 third order of the contact form. This tensor,
called paraquaternionic contact conformal curvature, is similar to
the Weyl conformal curvature in Riemannian geometry, the
Chern-Moser tensor in CR geometry, the para contact curvature in para CR geometry and to the quaternionic contact conformal curvature in quaternionic contact geometry.

It is shown that a paraquaternionic contact
manifold is locally paraquaternionic contact conformal to the standard flat paraquaternionic contact structure
on the paraquaternionic Heisenberg group, or equivalently, to the
standard para 3-Sasakian structure on the paraquaternionic pseudo-sphere iff the paraquaternionic contact
conformal curvature vanishes.


\end{abstract}

\keywords{geometry, paraquaternionic contact
conformal curvature, locally flat paraquaternionic contact structure} \subjclass{58G30, 53C17}

\date{\today}

 \author{Stefan Ivanov}
\address[Stefan Ivanov]{University of Sofia, Faculty of Mathematics and Informatics,
blvd. James Bourchier 5, 1164, Sofia, Bulgaria}
\address{and Institute of Mathematics and Informatics, Bulgarian Academy of
Sciences} \email{ivanovsp@fmi.uni-sofia.bg}

\author{Marina Tchomakova}
\address[Marina Tchomakova]
{University of Sofia, Faculty of Mathematics and Informatics,
blvd. James Bourchier 5, 1164, Sofia, Bulgaria} \email{mari\_06@abv.bg}

\author{Simeon Zamkovoy}

\maketitle

\tableofcontents

\setcounter{tocdepth}{2}

\section{Introduction}

We develop  a tensor invariant in  the sub-Riemannian geometry of para contact structures on a $4n+3$-dimensional differentiable manifold related to  the algebra of para quaternions, known also as split quaternions \cite{Swann}, quaternions of the second kind \cite{L}, and complex product structures \cite{AS}. The paraquaternionic contact structures,  introduced in \cite{CIZ},  turns out to be a  generalization of the para 3-Sasakian geometry developed in \cite{AK0,Swann}.

Paraquaternionic contact  geometry is a topic with some analogies  with the quaternionic contact  geometry introduced by O.Biquard \cite{B} and its developments in connection with finding  the extremals and the best constant in the $L^2$ Folland-Stein inequality on the quaternionic Heisenberg group and related quaternionic contact Yamabe problem \cite{IMV1,IMV2,IP,IMV3,IV}, but  also with differences mainly because the paraquaternionic contact structure lead to sub-hyperbolic PDE instead of sub-elliptic PDE in the quaternionic contact case. 


A paraquaternionic contact (pqc) manifold $(M, [g], \mathbb{PQ})$ is a 
$4n+3$-dimensional manifold $M$ with a codimension three
distribution $H$  locally given as the kernel of a 1-form
$\eta=(\eta_1,\eta_2,\eta_3)$ with values in $\mathbb{R}^3$. In addition, $H$ has a conformal  $Sp(n,\mathbb R)Sp(1,\mathbb R)$ structure, i.e. it is
equiped with a conformal class of neutral  metrics $[g]$ of signature $(2n,2n)$ and a rank-three bundle
$\mathbb {PQ}$ consisting of (1,1)-tensors on $H$, locally generated
by two almost para complex structures $I_1,I_2$ and an almost complex structure $I_3$ on $H$, satisfying
the identities of the imaginary unit para quaternions,
$I_1^2=I_2^2=id_{H},\quad I_3^2=-id_H,\quad I_1I_2=-I_2I_1=I_3,$ such that 
$-\epsilon_i2g(I_iX,Y)\ =\ d\eta_i(X,Y), \quad \epsilon_1=\epsilon_2=-\epsilon_3=1,\quad g\in[g].$ 

The 1-form $\eta$ is determined up to a conformal factor. Hence,
$H$ becomes equipped with a conformal class [g] of neutral
Riemannian metrics of signature (2n,2n). Transformations preserving a
given pqc structure $\eta$, i.e.
$\bar\eta=\mu\Phi\eta$ for a non-vanishing smooth function $\mu$ and a $SO(1,2)$ valued  smooth matrix $\Phi$, are
called \emph{paraquaternionic  contact conformal (pqc conformal) transformations}. To every metric in the fixed conformal class one can
associate a linear connection $\nabla$ preserving the pqc structure, introduced in  \cite{CIZ},  the (canonical)  pqc connection.

A basic example
is provided by any para 3-Sasakian manifold, which can be defined as a $(4n+3)$%
-dimensional pseudo Riemannian manifold, whose Riemannian cone is a hypersymplectic manifold. 


The paraquaternionic Heisenberg group $pQH$ with its "standard"
left-invariant pqc structure is the unique (up to a $SO(1,2)$-action)
example of a pqc structure with flat canonical connection \cite{CIZ}.
{As a manifold $pQH \ = pH^n\times\text {Im}\, pH$,
while the group multiplication is given by
\[
( q', \omega')\ =\ (q_o, \omega_o)\circ(q, \omega)\ =\ (q_o\ +\ q, \omega\ +\ \omega_o\ +
\ 2\ \text {Im}\  q_o\, \bar q),
\]
\noindent where $q,\ q_o\in pH^n$ and $\omega, \omega_o\in \text
{Im}\, pH$. The standard flat paraquaternionic contact structure
is defined by the left-invariant paraquaternionic contact form
\[\tilde\Theta\ =\ (\tilde\Theta_1,\ \tilde\Theta_2, \
\tilde\Theta_3)\ =\ \frac 12\ (d\omega \ - \ q \cdot d\bar q \ + \
dq\, \cdot\bar q),\] where $.$ denotes the para quaternion
multiplication.}

The aim of this paper is to find a tensor invariant on the tangent
bundle, characterizing locally the pqc structures, which are
paraquaternionic contact conformally equivalent to the flat
pqc structure on the  paraquaternionic Heisenberg group $pQH$ . With this goal in mind, we describe a curvature-type
tensor $W^{pqc}$ defined in terms of the curvature and torsion of
the canonical connection by \eqref{qccm}, involving  derivatives up to
second order of the horizontal metric, whose form is
similar to the Weyl conformal curvature in Riemannian geometry
(see e.g. \cite{Eis}), to the Chern-Moser invariant in CR
geometry \cite{ChM}, see also \cite{W}, the para contact conformal curvature developed in \cite{IVZ} and to the quaternionic contact conformal curvature  described by Ivanov-Vassilev in \cite{IV1}, see also \cite{IV}. We call $W^{pqc}$ the
\emph{paraquaternionic contact conformal curvature}, or \emph{pqc conformal
curvature}. The main purpose of this article is to prove
the following two facts.
\begin{thrm}\label{main1}
The pqc conformal curvature $W^{pqc}$ is invariant under pqc conformal transformations.
\end{thrm}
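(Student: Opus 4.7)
The plan is to reduce the theorem to two basic kinds of pqc conformal transformations and then carry out an explicit computation in the nontrivial one. Any pqc conformal transformation $\bar\eta = \mu\Phi\eta$ factors as the composition of an $SO(1,2)$-rotation $\bar\eta = \Phi\eta$ (with $\mu \equiv 1$) and a pure rescaling $\bar\eta = \mu\eta$ (with $\Phi \equiv I$), so it suffices to verify invariance in each of the two cases separately.

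For the rotation case the verification is essentially structural. The building blocks of $W^{pqc}$ --- the horizontal metric $g$, the endomorphisms $I_1, I_2, I_3$, and the curvature and torsion of the canonical pqc connection $\nabla$ --- transform equivariantly under the pointwise $SO(1,2)$-action which mixes the three 1-forms $\eta_i$ and correspondingly rotates the triple $(I_1,I_2,I_3)$. Since $W^{pqc}$ is defined by \eqref{qccm} as an $Sp(n,\mathbb R)Sp(1,\mathbb R)$-equivariant full contraction of these objects, its invariance under a rotation of $\eta$ is automatic from the form of the definition.

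The core of the theorem is thus the rescaling case $\bar\eta = \mu\eta$. The first step is to compute how all the pqc data transform: on the horizontal distribution $\bar g = \mu g$ while $\bar I_i = I_i$, so the pqc normalization $-\epsilon_i 2\bar g(\bar I_i\cdot,\cdot) = d\bar\eta_i$ is preserved; the new Reeb triple $\bar\xi_1,\bar\xi_2,\bar\xi_3$ differs from the old one by horizontal correction terms built from the horizontal gradient of $\mu$ that are forced by the uniqueness statement for the pqc Reeb vectors from \cite{CIZ}. From this I would extract the difference tensor $S := \bar\nabla - \nabla$: because the canonical pqc connection is characterized by its torsion-type conditions, $S$ is determined algebraically and is expressible in terms of the first horizontal derivatives of $\ln\mu$, together with its second horizontal derivatives in the vertical pieces. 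This step parallels Biquard's computation in the quaternionic contact case and its para contact adaptation in \cite{IVZ}.

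The final step is to substitute $S$ into the structural identities that express the curvature and torsion of $\bar\nabla$ in terms of those of $\nabla$, expand, and check that the specific combination \eqref{qccm} kills all the inhomogeneous terms. This is the main obstacle. The curvature $\bar R$ picks up terms that are linear in the second horizontal derivatives of $\ln\mu$, plus quadratic products of first derivatives; the torsion $\bar T$ picks up first-derivative terms. The tensor $W^{pqc}$ is assembled so that the Ricci-type contractions and the $\mathfrak{sp}(1,\mathbb R)$-torsion pieces subtracted from $R$ absorb exactly these extra terms. Verifying the cancellation is a term-by-term computation in which one must carefully track the sign factors $\epsilon_1 = \epsilon_2 = -\epsilon_3 = 1$, distinguish the action of the two para-complex structures $I_1, I_2$ from that of the complex structure $I_3$, and exploit the pqc Bianchi identities for $\nabla$ to recognize various contractions as the precise trace pieces appearing in \eqref{qccm}. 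Once this has been carried out, the rotation and rescaling invariances combine to give Theorem \ref{main1}.
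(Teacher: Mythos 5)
Your overall skeleton coincides with the paper's: the reduction to the pure rescaling case $\bar\eta=\mu\eta$ (justified, as in Section 4.1, by the fact that the canonical pqc connection is unchanged under the pointwise $SO(1,2)$-action), the formula $\bar\xi_s=2h\xi_s+I_s\nabla h$ for the new Reeb fields, and the determination of the difference tensor $S=\bar\nabla-\nabla$ from the torsion and metric conditions (equations \eqref{qcw2}--\eqref{qcw3}) are all exactly what the paper does.

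The gap is in your final step. You say one must ``substitute $S$ \dots and check that the specific combination \eqref{qccm} kills all the inhomogeneous terms'' by a term-by-term computation, but you neither carry this out nor identify the structural mechanism that makes the cancellation happen, and that mechanism is the actual content of the theorem. The paper's proof rests on two facts you do not supply: first, the conformal change of the horizontal curvature organizes into the single identity \eqref{qcw4}, in which every correction term is a Kulkarni--Nomizu-type expression built from one $(0,2)$-tensor $M$ given by \eqref{qcw5}; second, tracing \eqref{qcw4} and comparing with Theorem~\ref{sixtyseven} yields $M_{[sym]}=\bar L-L$ (equation \eqref{mm}), where $L=\tfrac12\tau+\mu+\tfrac{Scal}{32n(n+2)}\,g$ is the Schouten-type tensor \eqref{lll}. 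It is precisely this coincidence that makes the auxiliary tensor $PWR$ of \eqref{qcwdef} manifestly invariant, and the identification $W^{pqc}=PWR$ (via the vanishing of $PWR_{[-1]}$ in Proposition~\ref{main0}) then finishes the argument. Working directly with \eqref{qccm} as you propose would additionally require the transformation laws of $\tau$, $\mu$, $Scal$ and of the $[3]$-projection of $R$ (i.e.\ \eqref{defor} and \eqref{pyam}), none of which you derive. As written, the decisive cancellation is asserted rather than proved, so the proposal is an outline of the theorem's statement rather than a proof of it.
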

\begin{thrm}\label{main2}
A pqc structure on a (4n+3)-dimensional smooth pqc manifold is locally pqc conformal to the
standard flat pqc structure on the paraquaternionic Heisenberg group $pQH$
if and only if the pqc conformal curvature vanishes,
$W^{pqc}=0$.
\end{thrm}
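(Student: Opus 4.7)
The plan is to prove the two directions separately. For the easy direction, suppose $(M,\eta)$ is locally pqc conformal to $(pQH,\tilde\Theta)$. By \cite{CIZ} the canonical pqc connection of $\tilde\Theta$ is flat, so all of its curvature and torsion (and hence all of their traces) vanish; since $W^{pqc}$ is assembled from precisely these ingredients, $W^{pqc}=0$ on $pQH$, and the pqc conformal invariance established in Theorem \ref{main1} transfers this vanishing to $M$.

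For the nontrivial converse, assume $W^{pqc}\equiv 0$ on $M$. Mimicking the Ivanov--Vassilev flatness theorem in the quaternionic contact setting \cite{IV1}, and ultimately the Chern--Moser strategy, the plan is to produce a local pqc conformal transformation $\bar\eta=\mu\Phi\eta$ whose canonical pqc connection is flat. The rigidity result of \cite{CIZ}---uniqueness up to an $SO(1,2)$-action of a pqc structure with flat canonical connection---then identifies $(M,\bar\eta)$ locally with $(pQH,\tilde\Theta)$, completing the argument.

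To implement this, I would first record the explicit pqc conformal transformation laws of the curvature $R$ and torsion $T$ of the canonical pqc connection. These laws split $R$ and $T$ into a conformally invariant piece (which is exactly $W^{pqc}$) and a piece involving the one-jet of $(\mu,\Phi)$ together with horizontal and vertical Hessians of $\log\mu$. Imposing $\bar R=0$ and $\bar T=0$ then becomes an overdetermined first-order PDE system for the pair $(\mu,\Phi)$, accompanied by algebraic prescriptions for certain auxiliary functions built from derivatives of $\mu$ along the Reeb directions. The hypothesis $W^{pqc}=0$ is precisely the obstruction whose vanishing renders the various compatibility conditions of this system algebraically consistent, so that each pqc conformal Ricci-type component of $\bar R$ can actually be arranged to vanish simultaneously.

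The main obstacle is then the \emph{integrability} of this overdetermined system. The plan is to prolong it on an adapted coframe bundle over $M$, introducing as new unknowns the covariant derivatives of $\mu$ of first and second order along both horizontal and vertical directions, until the system takes the form of a closed Pfaffian/Frobenius system. Verifying involutivity requires repeated differentiation and substitution using the pqc Bianchi identities of \cite{CIZ} combined with $W^{pqc}=0$; this is where the split signature and the $Sp(n,\mathbb R)Sp(1,\mathbb R)$ symmetry make the bookkeeping considerably heavier than in the quaternionic contact case, and is the technical step I expect to be hardest. Once involution is established, the Frobenius theorem yields a local solution $(\mu,\Phi)$, producing a pqc conformal transformation whose canonical connection is flat, and the argument concludes by invoking the characterization of $pQH$ from \cite{CIZ}.
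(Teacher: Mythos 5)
Your proposal follows essentially the same route as the paper: the easy direction via conformal invariance of $W^{pqc}$ plus flatness of the model, and the converse by setting up an overdetermined PDE system for the conformal factor (the paper reduces to a single scalar $h$, since the canonical connection is unchanged by the $SO(1,2)$ part, and only needs horizontal flatness $R_{|H}=0$ via Proposition~\ref{hflat}), whose integrability conditions are shown to follow from $W^{pqc}=0$ before applying the Frobenius theorem. Be aware, though, that the step you defer as ``the technical step I expect to be hardest'' --- verifying the Ricci-identity compatibility conditions case by case using the Bianchi identities --- is precisely where almost all of the paper's proof lives (the six cases of Section 5), so the plan is sound but the substance remains to be done.
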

We define a local map between the paraquaternionic Heisenberg group and the para 3-Sasakian pseudo-sphere (called the paraquaternionic Cayley transform) and show in Proposition~\ref{phps} below that the paraquaternionic Cayley transform  
 establishes  a conformal paraquaternionic
contact automorphism between the standard para 3-Sasaki structure on the
paraquaternionic pseudo-sphere $pS^{4n+3}$ and the standard flat  pqc structure on $pQH$.  As a consequence of Theorem~\ref{main2} and Proposition~\ref{phps}, we obtain 
\begin{cor}\label{main3}
A pQC manifold is locally paraquaternionic contact conformal to the paraquaternionic pseudosphere
$pS^{4n+3}$ if and only if the pqc conformal curvature vanishes,
$W^{pqc}=0$.
\end{cor}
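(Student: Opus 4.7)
The plan is to derive Corollary~\ref{main3} directly by chaining Theorem~\ref{main2} with Proposition~\ref{phps}, using that ``locally pqc conformal to'' is an equivalence relation on pqc structures, so that a pqc conformal equivalence between two model spaces transfers a local conformal flatness statement from one to the other.

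First I would invoke Theorem~\ref{main2}, which asserts that $W^{pqc} = 0$ on a $(4n+3)$-dimensional pqc manifold $M$ is equivalent to $M$ being locally pqc conformal to the standard flat pqc structure $\tilde\Theta$ on the paraquaternionic Heisenberg group $pQH$. Then I would appeal to Proposition~\ref{phps}, which exhibits the paraquaternionic Cayley transform $\mathcal{C}$ as a pqc conformal automorphism between the standard para 3-Sasakian structure on $pS^{4n+3}$ (with one point removed) and the flat structure on $pQH$. Concretely, $\mathcal{C}^{*}\tilde\Theta = \mu\,\Phi\,\eta_{pS}$ for some non-vanishing smooth function $\mu$ and some $SO(1,2)$-valued matrix $\Phi$, where $\eta_{pS}$ is the standard contact form on $pS^{4n+3}$.

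Next I would compose local pqc conformal equivalences. Given a point $p\in M$ with $W^{pqc}(p)=0$, Theorem~\ref{main2} supplies an open neighborhood $U\ni p$ and a pqc conformal diffeomorphism $F\colon U\to V\subset pQH$. Provided $V$ lies in the image of the Cayley transform (which can be arranged by shrinking $U$, since the Cayley transform is a diffeomorphism onto its image, an open dense subset of $pS^{4n+3}$), the composition $\mathcal{C}^{-1}\circ F\colon U\to pS^{4n+3}$ is a diffeomorphism onto an open subset and pulls back the para 3-Sasakian contact form to a $SO(1,2)$-multiple of the contact form on $U$, hence is a pqc conformal equivalence. Conversely, if $M$ is locally pqc conformal to $pS^{4n+3}$ near $p$, composing with $\mathcal{C}$ and shrinking the neighborhood if necessary gives a local pqc conformal equivalence with $pQH$, whence $W^{pqc}(p)=0$ by Theorem~\ref{main2}.

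There is no real obstacle here; the only subtle point is the compatibility of domains, namely that the local equivalence produced by Theorem~\ref{main2} lands in the image of the Cayley transform (or that one can translate inside $pQH$ via its left-invariant action, which is pqc automorphic, to arrange this), and this is handled by the usual local nature of the statement together with the fact that $\mathcal{C}$ is a diffeomorphism onto an open set of $pS^{4n+3}$. Thus both implications of the corollary follow immediately.
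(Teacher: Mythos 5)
Your proposal is correct and follows essentially the same route as the paper, which obtains Corollary~\ref{main3} precisely by combining Theorem~\ref{main2} with Proposition~\ref{phps} and the transitivity of local pqc conformal equivalence. The only slight imprecision is describing the excluded set $\Sigma_0$ as ``one point'': in the split signature it is the null set $|p-1|^2=(t-1)^2+x^2-y^2-z^2=0$, but since the Cayley transform is still a diffeomorphism of the open dense complement onto $\Sigma$ (and one can move points by automorphisms of $pS^{4n+3}$ or argue by continuity of $W^{pqc}$), this does not affect the local argument.
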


Our investigations follow the classical approach used by
H.Weyl, see e.g. \cite{Eis} and are  close to \cite{IVZ} and \cite{IV1} while  \cite{ChM} and \cite{W}  follow the Cartan method of equivalence.

\begin{rmrk}
Following the work of Cartan and Tanaka, a pqc structure can be
considered as an example of what has become known as a parabolic
geometry. The paraquaternionic Heisenberg group, as well as the
paraquaternionic pseudo-sphere, provide the flat
models of such a geometry due to the paraquaternionic Cayley transform.
 It is well known that the curvature of
the corresponding {regular} Cartan connection is the obstruction for
the local flatness. However, the Cartan curvature is not a tensor
field on the tangent bundle and it is highly nontrivial to extract a
tensor field involving the lowest order derivatives of the structure,
which implies the vanishing of the obstruction.
 Theorem 1.2 suggests that
a necessary and sufficient condition for the vanishing of the
Cartan curvature of a pqc structure is the vanishing of the
pqc conformal curvature tensor, $W^{pqc}=0$.
\end{rmrk}

\textbf{Organization of the paper.} The paper relies heavily on  the canonical connection introduced in
\cite{CIZ} and the properties of its torsion and curvature described in \cite{CIZ}.
In order to make the present paper self-contained, in Section \ref{s:review}
we give a review of the notion of a paraquaternionic contact structure and
collect formulas and results from \cite{CIZ}  that will be used in the
subsequent sections.
\begin{conv}\label{conven}
We use the following conventions:
\begin{enumerate}[a)]
\item We shall use $X,Y,Z,U$ to denote horizontal vector fields, i.e. $X,Y,Z,U\in H$;
\item $\{e_1,\dots,e_{n},I_1e_1,\dots,I_1e_{n},Ie_2,\dots,I_2e_{n},I_3e_1,\dots,I_3e_{n}\}$ denotes an adapted
orthonormal basis of the horizontal space $\mathbb H$.;
\item The summation convention over repeated vectors from the basis
$\{e_1,\dots,e_{4n}\}$ will be used,
\begin{multline*}
P(e_b,e_b)=\sum_{b=1}^{4n}g(e_b,e_b)R(e_b,e_b)=\sum_{b=1}^{n}\Big[P(e_b,e_b)-P(I_1e_b,I_1e_b)
-P(I_2e_b,I_2e_b)+P(I_3e_b,I_3e_b)\Big];
\end{multline*}
\item The triple $(i,j,k)$ denotes any cyclic permutation of
$(1,2,3)$. In particular, any equation involving $i,j,k$ holds for
any such permutation;
\item $s$ and $t$ will be any numbers from the set $\{1,2,3\}, \quad
s,t\in\{1,2,3\}$.
\end{enumerate}
\end{conv}

\textbf{Acknowledgements}  The research of M. Tch.  is partially  financed by the European Union-Next Generation EU, through the National Recovery and Resilience Plan of the Republic of Bulgaria, project N:
BG-RRP-2.004-0008-C01. The research of S.I.  is partially supported  by Contract KP-06-H72-1/05.12.2023 with the National Science Fund of Bulgaria, Contract 80-10-181 / 22.4.2024   with the Sofia University "St.Kl.Ohridski" and  the National Science Fund of Bulgaria, National Scientific Program ``VIHREN", Project KP-06-DV-7.

\section{Paraquaternionic contact manifolds and the canonical connection}

\label{s:review} In this section we will briefly review the basic notions of
paraquaternionic contact geometry and recall some results from \cite{CIZ}.

The algebra $pQ$ of para quaternions  (sometimes called  split quaternions \cite{Swann})  is a four-dimensional real vector space with
basis ${1, r_1,r_2,r_3}$, satisfying
$
r_1^2= r_2^2=1,\quad r_3^2=-1, \quad r_1r_2=-r_2r_1=r_3.
$

This carries a natural indefinite inner product given by $<p,q>= Re( \bar pq)$, where
$p = t+r_3x+r_1y+r_2z$ has $\bar p =  t-r_3x-r_1y-r_2z$. We have $||p||^2 = t^2 +x^2 -y^2 -z^2$,
so a metric of signature (2,2). This norm is multiplicative, $||pq||^2 = ||p||^2||q||^2$,
but the presence of elements of length zero means that $pQ$ contains zero divisors.


A paraquaternionic contact (pqc) manifold $(M, g,
pQ)$ is a $4n+3$ dimensional manifold $M$ with a codimension three
distribution $H$ equipped with a metric $g$ of neutral signature (2n,2n) and a $Sp(n,\mathbb R)Sp(1,\mathbb R)$ structure,
i.e., we have
\begin{enumerate}
\item[i)] a  rank-three bundle $\mathbb{PQ}$, consisting of (1,1)-tensors on $H$, locally generated
by two almost para complex structures $I_1,I_2$ and an almost complex structure $I_3$ on $H$, satisfying
the identities of the imaginary unit para quaternions,
\begin{equation}\label{paraq}I_s^2=\epsilon_s,\quad I_iI_j=-I_jI_i=-\epsilon_kI_k,  \quad where \quad\epsilon_1=\epsilon_2=-\epsilon_3=1, 
\end{equation}
which are paraquaternionic compatible with the neutral  metric $g$ on $H$,
\begin{equation}\label{param} g(I_s.,I_s.)=-\epsilon_sg(.,.).
\end{equation}
\item[ii)] $H$ is locally given as the kernel of a 1-form
$\eta=(\eta_1,\eta_2,\eta_3)$ with values in $\mathbb{R}^3, H=\cap_{s=1}^3 Ker\, \eta_s$ and the following compatibility condition holds 
\begin{equation}\label{ccon}-2\epsilon_sg(I_sX,Y)\ =\ d\eta_s(X,Y), \quad X,Y\in H.
\end{equation}
\end{enumerate}
A pqc manifold $(M, \bar g,\mathbb{PQ} )$ is called paraquaternionic contact conformal (pqc conformal) to $(M, g,\mathbb{Q} )$ if $\bar g=\nu g$, for a nowhere vanishing smooth function $\nu$. In that case, if $\bar\eta$ is a
corresponding associated one-form with paraquaternionic structure $\bar I_s$,
we have $\bar\eta\ =\ \nu\, \Psi\,\eta$ for some $\Psi\in SO(1,2)$. In particular, starting with a pqc manifold $%
(M, \eta)$ and defining $\bar\eta\ =\ \nu\, \eta$ we obtain a pqc manifold $%
(M, \bar\eta)$, pqc conformal to the original one.

On a paraquaternionic contact manifold there exists a canonical connection
defined in \cite{CIZ}.
\begin{thrm}
\cite{CIZ}\label{biqcon} {Let $(M, g, pQ)$ be a paraquaternionic contact
manifold} of dimension $4n+3>7$ with a fixed neutral metric $g$ on $H$. Then there exists a unique supplementary subspace $V$ to $H$ in $%
TM$, locally generated by a  vector fields $\{\xi_1,\xi_2,\xi_3\}$, satisfying the conditions
\begin{equation}  \label{xi}
\begin{aligned} \eta_s(\xi_t)=\delta_{st}, \qquad (\xi_s\lrcorner
d\eta_s)_{|H}=0,\\ (\xi_j\lrcorner d\eta_i)_{|H}=\epsilon_k(\xi_i\lrcorner
d\eta_j)_{|H},
\end{aligned}
\end{equation}
and a unique connection $\nabla$ with
torsion $T$ on $M^{4n+3}$, such that:
\begin{enumerate}
\item[i)] $\nabla$ preserves the splitting $H\oplus V$ and the $Sp(n,\mathbb R)Sp(1,\mathbb R)$-structure
on $H$, $\nabla g=0, \nabla pQ\subset pQ$;
\item[ii)] for $X,Y\in H$, one has $T(X,Y)=-[X,Y]_{|V}$;
\item[iii)] for $\xi\in V$, the endomorphism $T(\xi,.)_{|H}$ of $H$ lies in $%
(sp(n,\mathbb R)\oplus sp(1,\mathbb R))^{\bot}\subset gl(4n)$;
\item[iv)] the connection on $V$ is induced by the natural identification $\varphi
$ of $V$ with the subspace $sp(1,\mathbb R)$ of the endomorphisms of $H$, i.e. $%
\nabla\varphi=0$.
\end{enumerate}
\end{thrm}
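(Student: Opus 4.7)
The plan is to imitate Biquard's construction in the quaternionic contact setting, taking due care of the neutral signature that arises here. The proof splits naturally into three stages: producing the transverse Reeb frame $\xi_1,\xi_2,\xi_3$ uniquely; constructing the horizontal and vertical pieces of $\nabla$; and finally pinning down the mixed components through the torsion conditions.

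For the first stage, fix any transverse complement to $H$ and parametrize all candidate triples as $\xi_s = \xi_s^0 + X_s$ with $X_s \in H$, where the $\xi_s^0$ satisfy $\eta_s(\xi_t^0)=\delta_{st}$. The two remaining conditions in \eqref{xi} become, via the compatibility \eqref{ccon}, a linear system for the $X_s$ whose coefficient operator involves only the para (almost) complex structures $I_s$. Invertibility follows from \eqref{paraq} and the non-degeneracy of $g$ on $H$, giving existence and uniqueness of the $\xi_s$.

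For the second stage, on $H\times H$ the identity $\nabla g = 0$ together with the prescription $T(X,Y) = -[X,Y]_V$ from (ii) produces a Koszul-type formula that determines the horizontal part of $\nabla_X Y$ completely, while the vertical part is given directly by $-[X,Y]_V$. On $V$, condition (iv) combined with the identification $\varphi\colon V\to \mathfrak{sp}(1,\mathbb R)$ and $\nabla\varphi = 0$ returns $\nabla\xi_s$ as an explicit algebraic expression in the $\mathfrak{sp}(1,\mathbb R)$-part of the horizontal connection form. The only remaining unknown is the mixed torsion $T(\xi_s,X)|_H$, equivalently $\nabla_X \xi_s$.

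The main obstacle is the third stage. Condition (iii) forces the endomorphism $X \mapsto T(\xi_s,X)|_H$ to lie in $(\mathfrak{sp}(n,\mathbb R)\oplus \mathfrak{sp}(1,\mathbb R))^{\perp}\subset \mathfrak{gl}(4n)$ with respect to the natural trace pairing. One must decompose the naive candidate built from $[\xi_s,X]$ according to the orthogonal splitting $\mathfrak{gl}(4n) = (\mathfrak{sp}(n,\mathbb R)\oplus \mathfrak{sp}(1,\mathbb R)) \oplus (\mathfrak{sp}(n,\mathbb R)\oplus \mathfrak{sp}(1,\mathbb R))^{\perp}$, and show that the projection onto the second factor uniquely absorbs the remaining freedom in $\nabla_X \xi_s$. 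For $n\geq 2$ a representation-theoretic count under the $Sp(n,\mathbb R)Sp(1,\mathbb R)$-action confirms that the dimensions match, which is exactly where the restriction $4n+3>7$ enters. Uniqueness is then automatic since every step was forced, and existence reduces to checking that the resulting $\nabla$ preserves the full $Sp(n,\mathbb R)Sp(1,\mathbb R)$ structure, a calculation that boils down to the integrability of the paraquaternionic tensors already encoded in \eqref{paraq} and \eqref{ccon}.
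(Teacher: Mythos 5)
This theorem is not proved in the present paper at all: it is imported verbatim from \cite{CIZ}, so there is no internal proof to compare against, only the Biquard-type construction that \cite{CIZ} adapts to the split-quaternionic setting. Your outline follows the right general strategy (Reeb frame, then horizontal Koszul-type determination, then the mixed components via the $(sp(n,\mathbb R)\oplus sp(1,\mathbb R))^{\perp}$ condition on the torsion), but it contains a genuine gap in the first stage, and this gap propagates into a misstatement about where the hypothesis $4n+3>7$ is used.

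Concretely: writing $\xi_s=\xi_s^0+X_s$ with $X_s\in H$, the conditions \eqref{xi} do not form a square linear system. The three diagonal conditions $(\xi_s\lrcorner d\eta_s)_{|H}=0$ already determine each $X_s$ uniquely via \eqref{ccon} and the invertibility of $I_s$, so uniqueness is immediate; but the three off-diagonal conditions $(\xi_j\lrcorner d\eta_i)_{|H}=\epsilon_k(\xi_i\lrcorner d\eta_j)_{|H}$ are then $3\cdot 4n$ additional equations imposed on unknowns that are already fixed. The system is overdetermined, and the real content of this stage is \emph{solvability}, not invertibility: one must decompose the relevant components of $d\eta$ into irreducible $Sp(n,\mathbb R)Sp(1,\mathbb R)$-modules and show that the obstruction to compatibility lies in a summand that vanishes automatically when $n>1$. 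This is precisely where the dimension restriction enters --- the paper itself says so when it remarks that in dimension seven the conditions \eqref{xi} ``do not always hold'' but that, \emph{once Reeb fields satisfying \eqref{xi} are assumed to exist}, the rest of Theorem~\ref{biqcon} goes through. Your proposal instead asserts that the Reeb system is uniquely solvable by ``invertibility'' and relocates the $4n+3>7$ hypothesis to the third stage (the torsion decomposition), where in fact no dimension restriction is needed. The third stage does require a nontrivial argument --- namely that prescribing the $(sp(n,\mathbb R)\oplus sp(1,\mathbb R))^{\perp}$-component of $T(\xi,\cdot)_{|H}$ together with condition (iv) exhausts the residual freedom in $\nabla_X\xi_s$ and in the $sp(1,\mathbb R)$-forms $\alpha_s$ of \eqref{conef} --- but that argument works uniformly in $n$. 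As written, your stage one would not survive scrutiny, and the existence half of the theorem is not established.
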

We shall call the above connection \emph{the canonical pqc connection}.
The vector fields $\xi_1,\xi_2,\xi_3$ are called Reeb vector fields or
fundamental vector fields.

The fundamental 2-forms $\omega_s$
of the pqc structure $pQ$ are defined by

\centerline{$
-2\epsilon_s\omega_{s|H}\ =\ \, d\eta_{s|H},\qquad \xi\lrcorner\omega_s=0,\quad \xi\in
V.
$}

The second condition ii) yields that the torsion restricted to $H$ has the form
\begin{equation}  \label{torha}
T(X,Y)=-[X,Y]_{|V}=-2\sum_{s=1}^3\epsilon_s\f_s(X,Y)\xi_s=-2\omega_1(X,Y)\xi_1-2\omega_2(X,Y)\xi_2+2\omega_3(X,Y)\xi_3.
\end{equation}
The fourth condition iv) implies 
\be\label{conef}
\begin{split}
\nabla I_i=-\alpha_j\otimes I_k+\epsilon_k\alpha_k\otimes I_j,\quad
\nabla\xi_i=-\alpha_j\otimes\xi_k+\epsilon_k\alpha_k\otimes\xi_j,
\end{split}
\ee
where the  $sp(1,\mathbb R)$-connection 1-forms $\alpha_s$ are determined by the pqc structure \cite[Theorem~3.8]{CIZ}.

{\ If the dimension of $M$ is seven, the conditions \eqref{xi} do not
always hold. It is shown in \cite{CIZ} that if we additionally assume the
existence of Reeb vector fields as in \eqref{xi}, then Theorem~\ref{biqcon}
holds. Henceforth, by a pqc structure in dimension $7$ we shall 
mean a pqc structure satisfying \eqref{xi}.}

Notice that equations \eqref{xi} are invariant under the natural $SO(1,2)$
action. Using the triple of Reeb vector fields we extend $g$ to a metric on $%
M$ by requiring 
$span\{\xi_1,\xi_2,\xi_3\}=V\perp H \text{ and } g(\xi_s,\xi_k)=-\epsilon_s\delta_{sk}.
$ 
\hspace{2mm} \noindent The extended metric does not depend on the action of $%
SO(1,2)$ on $V$, but it changes in an obvious manner if $\eta$ is multiplied
by a conformal factor. Clearly, the canonical pqc connection preserves the
extended metric on $TM, \nabla g=0$. 
\subsection{The torsion endomorphism}
The properties of the canonical pqc connection are encoded in the properties of
the torsion endomorphism $T_{\xi}=T(\xi,.) : H\rightarrow H, \quad \xi\in V$.
Recall that any endomorphism $\Psi$ of $H$ can be decomposed with respect to the
paraquaternionic structure $(pQ,g)$ uniquely into
$Sp(n,\mathbb R)$-invariant
parts as follows 
$
\Psi=\Psi^{+++}+\Psi^{+--}+\Psi^{-+-}+\Psi^{--+}, 
$ 
where $\Psi^{+++}$ commutes with all three $I_i$,
$\Psi^{+--}$ commutes with $I_1$ and anti-commutes with the other
two and etc.  

The two $Sp(n,\mathbb R)Sp(1,\mathbb R)$-invariant components are
given by
\begin{equation}
{\label{New21}} \Psi_{[3]}=\Psi^{+++}, \qquad
\Psi_{[-1]}=\Psi^{+--}+\Psi^{-+-}+\Psi^{--+}.
\end{equation}
\noindent Denoting the corresponding (0,2) tensor via $g$ by the
same letter one sees that the $Sp(n,\mathbb R)Sp(1,\mathbb R)$-invariant components are
the projections on the eigenspaces of the Casimir operator
\begin{equation}  \label{e:cross}
\dag \ =-\ I_1\otimes I_1\ -\ I_2\otimes I_2\ +\ I_3\otimes I_3,
\end{equation}
corresponding to the eigenvalues $3$ and $-1$, respectively.
If $n=1$ then the space of symmetric endomorphisms commuting with all
$I_i, i=1,2,3$ is 1-dimensional, i.e. the [3]-component of any
symmetric
endomorphism $\Psi$ on $H$ is proportional to the identity, $\Psi_{[3]}=%
\frac{Tr(\Psi)}{4}Id_{|H}$.

Decomposing the torsion endomorphism $T_{\xi}\in(sp(n)+sp(1))^{\perp}$ into a symmetric part $\tb_{\xi}$ and an anti-symmetric part $T^a_{\xi}$,
we  define the tensors $\tau(X,Y)$ and $\mu(X,Y)$ on $H$ by 
\be\label{deftau}
\begin{split}
\tau(X,Y)=-\epsilon_i\tb(\xi_i,I_iX,Y)-\epsilon_j\tb(\xi_j,I_jX,Y)-\epsilon_k\tb(\xi_k,I_kX,Y);\\
\mu(X,Y)=\epsilon_s\ta(\xi_s,I_sX,Y).
\end{split}
\ee
The tensors $\tau$ and $\mu$ do not depend on the particular choice of the Reeb vector fields and are invariant under the natural action of $SO(1,2)$.

 We summarize the description of the torsion from \cite{CIZ} in the following Proposition. 
\begin{prop}
\cite{CIZ}\label{torb} 
The tensor $\tau$ on $H$ is symmetric, trace-free,  belongs to the [-1]- component and determines the symmetric part of the torsion endomorphism, i.e. it satisfies the relations
\begin{gather}\label{tau-trfree}
\tau(X,Y)=\tau(Y,X), \quad \tau(e_a,e_a)=\tau(I_se_a,e_a)=0;\\\label{tau-sym}
\tau(X,Y)-\tau(I_1,X,I_1,Y)-\tau(I_2X,I_2Y)+\tau(I_3X,I_3Y)=0;\\\label{tau-1}
\tb(\xi_s,X,Y)=-\frac14\Big[\tau(I_sX,Y)+\tau(X,I_sY) \Big].
\end{gather}
The tensor $\mu$ is symmetric, trace-free,  has the properties

\be\label{propmu}
\mu(I_sX,I_sY)=-\epsilon_s\mu(X,Y),
\ee
and belongs to the [3]- component and determines the skew-symmetric part of the torsion endomorphism 
\be\label{mus}
\ta(\xi_s,X,Y)=\mu(I_sX,Y).
\ee
If the dimension is seven then $\mu=0$.
\end{prop}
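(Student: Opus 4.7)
The plan is to derive all the claimed properties of $\tau$ and $\mu$ from the single algebraic constraint imposed by condition iii) of Theorem~\ref{biqcon}, namely $T(\xi,\cdot)_{|H}\in (sp(n,\mathbb R)\oplus sp(1,\mathbb R))^{\perp}\subset gl(4n)$, combined with the Reeb-field compatibility \eqref{xi}. The key tool is a twofold decomposition of $\mathrm{End}(H)$: $g$-orthogonally into symmetric and anti-symmetric pieces, and under the Casimir operator $\dag$ of \eqref{e:cross} into the $[3]$- and $[-1]$-components of \eqref{New21}. Both $sp(n,\mathbb R)$ and $sp(1,\mathbb R)$ lie entirely in the anti-symmetric part of $\mathrm{End}(H)$, with $sp(n,\mathbb R)$ filling precisely the anti-symmetric $[3]$-subspace (it consists of $g$-anti-symmetric endomorphisms commuting with every $I_s$), and $sp(1,\mathbb R)=\mathrm{span}(I_1,I_2,I_3)$ sitting inside the anti-symmetric $[-1]$-subspace. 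Since $g$-symmetric and $g$-anti-symmetric endomorphisms are automatically trace-orthogonal, condition iii) is a non-trivial restriction only on the anti-symmetric part of $T_\xi$: it forces the $[3]$-anti-symmetric component of $T^a_\xi$ to vanish and imposes $\mathrm{tr}(T^a_{\xi_s}\circ I_t)=0$ from orthogonality to $sp(1,\mathbb R)$.

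For the symmetric piece $\tb_\xi$, no orthogonality is imposed by iii); however, using \eqref{xi} together with \eqref{ccon} and \eqref{torha}, I would derive extra relations among the three tensors $\tb(\xi_s,\cdot,\cdot)$ that force their $[3]$-symmetric components to vanish, so each $\tb(\xi_s,\cdot,\cdot)$ lies in the $[-1]$-symmetric subspace. I would then define $\tau$ by \eqref{deftau}; its symmetry, the trace-free identities \eqref{tau-trfree} and the $[-1]$-characterization \eqref{tau-sym} follow mechanically from \eqref{paraq} and \eqref{param} combined with the $[-1]$-type of the $\tb(\xi_s,\cdot,\cdot)$. The inversion \eqref{tau-1} is obtained by substituting \eqref{deftau} into its right-hand side, expanding $I_sI_t=-\epsilon_u I_u$ for cyclic $(s,t,u)$ via \eqref{paraq}, and collecting terms with the help of \eqref{tau-sym}.

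For the anti-symmetric piece, since the $[3]$-part of $T^a_\xi$ already vanishes and its $[-1]$-part is orthogonal to $sp(1,\mathbb R)$, packaging $T^a_\xi$ through \eqref{deftau} produces a symmetric $(0,2)$ tensor $\mu$; the correspondence $\mu\leftrightarrow \mu(I_s\cdot,\cdot)$ is a bijection between symmetric $(0,2)$-tensors of $[3]$-type and anti-symmetric $(0,2)$-tensors of $[-1]$-type orthogonal to $sp(1,\mathbb R)$, which is exactly how \eqref{propmu} and the inversion \eqref{mus} come out; both can also be verified directly from \eqref{deftau} using $I_s^2=\epsilon_s$. The main obstacle I anticipate is the sign bookkeeping imposed by the mixed signs $\epsilon_1=\epsilon_2=-\epsilon_3=1$ that appear in \eqref{paraq}--\eqref{param} and in the Casimir $\dag$: every cross term acquires one of these signs and they must be tracked consistently to avoid errors in the $[3]/[-1]$-projections. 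Finally, the seven-dimensional case $n=1$ is a dimension count: the $g$-anti-symmetric endomorphisms of $H\cong\mathbb R^4$ form $so(2,2)$, of dimension $6=\dim sp(n,\mathbb R)+\dim sp(1,\mathbb R)$, so the two subalgebras fill the entire anti-symmetric part of $\mathrm{End}(H)$, leaving no room for $T^a_\xi$; hence $\mu=0$.
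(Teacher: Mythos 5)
You should first be aware that the paper does not prove this proposition: it is imported verbatim from \cite{CIZ}, so there is no internal argument to measure yours against, and the comparison below is with what such a proof must actually contain. Your algebraic framework is correct and well chosen: $g$-symmetric and $g$-antisymmetric endomorphisms are trace-orthogonal, $sp(n,\mathbb R)$ is exactly the antisymmetric $[3]$-block, $sp(1,\mathbb R)=\mathrm{span}(I_1,I_2,I_3)$ sits in the antisymmetric $[-1]$-block, and consequently condition iii) of Theorem~\ref{biqcon} constrains only $\ta_{\xi_s}$, forcing it to be of $[-1]$-type and orthogonal to the three $I_s$. Your dimension count for $n=1$ (the antisymmetric endomorphisms form $so(2,2)$, of dimension $6=\dim sp(1,\mathbb R)+\dim sp(1,\mathbb R)$, and the trace form is nondegenerate there, so $\ta_\xi=0$ and hence $\mu=0$) is correct and complete as it stands.

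The gap sits exactly where the content of the proposition lies. The proposition asserts far more than condition iii) gives: that each $\ta_{\xi_s}$ \emph{commutes} with $I_s$ (i.e.\ is purely of the single $[-1]$-type whose $s$-th sign is ``$+$''), that the symmetric $[3]$-tensor $\epsilon_s\ta(\xi_s,I_s\cdot,\cdot)$ is the \emph{same} for $s=1,2,3$, and that the three symmetric parts $\tb_{\xi_s}$ have vanishing $[3]$-component and are all reconstructed from one symmetric $[-1]$-tensor $\tau$ via \eqref{tau-1}. Your claimed ``bijection between symmetric $(0,2)$-tensors of $[3]$-type and antisymmetric $(0,2)$-tensors of $[-1]$-type orthogonal to $sp(1,\mathbb R)$'' cannot be correct as stated: for fixed $s$ the map $\mu\mapsto\mu(I_s\cdot,\cdot)$ lands only in the subspace of the antisymmetric $[-1]$-block commuting with $I_s$, which is one third of the proposed target, so surjectivity fails and the two crucial facts ($[\ta_{\xi_s},I_s]=0$ and the $s$-independence of $\mu$) are being assumed rather than proved. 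For the symmetric part you defer everything to unspecified ``extra relations'' from \eqref{xi}, \eqref{ccon} and \eqref{torha}; note that even the symmetry of $\tau$ as defined by \eqref{deftau} fails for a generic triple of symmetric endomorphisms (a $[3]$-component of $\tb_{\xi_s}$ contributes an \emph{antisymmetric} piece to $\sum_s\epsilon_s\tb_{\xi_s}I_s$), so those relations carry essentially all the weight of \eqref{tau-trfree}--\eqref{tau-1}. They are obtained in \cite{CIZ} from the differential structure --- the Lie derivatives $\mathcal{L}_{\xi_s}g$ and $\mathcal{L}_{\xi_s}I_t$, the identity $d^2\eta_s=0$ evaluated on mixed horizontal and vertical arguments, and the compatibility $(\xi_j\lrcorner d\eta_i)_{|H}=\epsilon_k(\xi_i\lrcorner d\eta_j)_{|H}$ --- and none of this is reproduced or replaced in your proposal.
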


\subsection{The curvature and the  Ricci type tensors}

Let $R=[\nabla,\nabla]-\nabla_{[\ ,\ ]}$ be the curvature tensor of
$\nabla$ and  denote the curvature tensor of type (0,4) by the
same letter. 
The first Bianchi identity for the canonical pqc connection reads 
\begin{equation}  \label{bian1}
\sum_{(A,B,C)}\Bigl\{R(A,B,C,D)\Bigr\}= \sum_{(A,B,C)}\Bigl\{ (\nabla_AT)(B,C,D) +
T(T(A,B),C,D)\Bigr\}=b(A,B,C,D),
\end{equation}
where $\sum_{(A,B,C)}$ denotes the cyclic sum and $A,B,C,D \in \Gamma(TM)$.

The curvature of a metric connection  is skew-symmetric with respect to
the last two arguments, $R(A,B,C,D)=-R(A,B,D,C)$. It follows directly from the first Bianchi
identity \eqref{bian1} that
\begin{multline}  \label{zam}
2Zam(A,B,C,D)=2R(A,B,C,D)-2R(C,D,A,B)\\=\ b(A,B,C,D) + \ b(B,C,D,A)
- \ b(A,C,D,B)-\ b(A,B,D,C).
\end{multline}
The horizontal Ricci tensor and the scalar curvature $Scal$ of the canonical pqc 
connection, called \emph{pqc Ricci tensor} and \emph{pqc scalar 
curvature}, respectively, are defined by
\begin{equation*}  \label{e:horizontal ricci}
Ric(X,Y)=R(e_a,X,Y,e_a), 
Scal=Ric(e_a,e_a).
\end{equation*}
The curvature of the canonical pqc connection admits  several 
traces, defined in \cite{CIZ} by
\begin{gather*}
4n\rho_s(A,B)=R(A,B,e_a,I_se_a), \hspace{1mm} 4n\varrho_s(A,B)=R(e_a,I_se_a,A,B), \hspace{1mm}
4n\zeta_s(A,B)=R(e_a,A,B,I_se_a).
\end{gather*}
The curvature operator $R(B,C)$ preserves the pqc structure on $M$  since the connection $\C$ preserves it. In particular, $R(B,C)$ preserves the splitting $H\oplus V$ and the paraquaternionic structure on $H$, so $R(B,C)\in sp(n,\mathbb R)\oplus sp(1,\mathbb R)$ on $H$. We use the following result established in \cite{CIZ}.
\begin{lemma}\cite{CIZ}
On a pqc manifold the next identities hold
\begin{gather}\label{rjr}
\epsilon_iR(A,B,I_iX,I_iY)+R(A,B,X,Y)=-2\epsilon_j\rho_j(A,B)\f_j(X,Y)-2\epsilon_k\rho_k(A,B)\f_k(X,Y);\\
\label{rhov}
 R(A,B)\xi_i=-2\epsilon_i\rho_k(A,B)\xi_j+2\epsilon_i\rho_j(A,B)\xi_k, \qquad
 \rho_i=\frac12\Big[\epsilon_kd\alpha_i-\epsilon_j\alpha_j\wedge\alpha_k \Big];
\end{gather}
\end{lemma}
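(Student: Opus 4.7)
The plan is to exploit the crucial observation stated just before the lemma: since the canonical pqc connection $\nabla$ preserves the $Sp(n,\mathbb{R})Sp(1,\mathbb{R})$-structure, the curvature operator $R(A,B)$ preserves the splitting $H\oplus V$ and its restriction to $H$ lies in $sp(n,\mathbb{R})\oplus sp(1,\mathbb{R})$. Accordingly, I would decompose
\[
R(A,B)|_H\ =\ R_0(A,B)\ +\ \sum_{s=1}^3 a_s(A,B)\,I_s,
\]
where $R_0(A,B)\in sp(n,\mathbb{R})$ commutes with $I_1,I_2,I_3$ and the scalar functions $a_s(A,B)$ encode the $sp(1,\mathbb{R})$-component. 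Tracing this identity against $I_s$ using the sign convention of Convention~\ref{conven}(c), the $R_0$-contribution vanishes by orthogonality of $sp(n,\mathbb{R})$ and $sp(1,\mathbb{R})$ inside $\mathrm{End}(H)$, while the $sp(1,\mathbb{R})$-contribution picks out $a_s$. Together with $g(I_sX,I_sY)=-\epsilon_sg(X,Y)$ this yields the key identification $a_s=-\epsilon_s\rho_s$.

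For identity \eqref{rjr} I would use the $g$-skewness of $I_i$ to write
\[
R(A,B,I_iX,I_iY)\ =\ -g\bigl(I_iR(A,B)I_iX,\,Y\bigr).
\]
The $R_0$-piece of $\epsilon_iR(A,B,I_iX,I_iY)+R(A,B,X,Y)$ vanishes because $R_0$ commutes with $I_i$ and $I_i^2=\epsilon_i$. For the $sp(1)$-piece, a short computation using the paraquaternion identities $I_iI_jI_i=-\epsilon_iI_j$, $I_iI_kI_i=-\epsilon_iI_k$ and $I_i^3=\epsilon_iI_i$ yields
\[
R(A,B)-\epsilon_iI_iR(A,B)I_i\ =\ 2a_j(A,B)\,I_j\ +\ 2a_k(A,B)\,I_k
\]
on $H$, whence the left-hand side of \eqref{rjr} equals $2a_j\omega_j(X,Y)+2a_k\omega_k(X,Y)$. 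Substituting $a_s=-\epsilon_s\rho_s$ gives exactly \eqref{rjr}.

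For identity \eqref{rhov} I would compute $R(A,B)\xi_i$ in two complementary ways. On one hand, the parallelism $\nabla\varphi=0$ with $\varphi(\xi_s)=I_s$ implies that $R(A,B)$ acts on $V$ via the commutator with its $sp(1)$-component; combined with the bracket relations $[I_j,I_i]=2\epsilon_kI_k$ and $[I_k,I_i]=-2\epsilon_jI_j$ coming from $I_iI_j=-\epsilon_kI_k$, this produces
\[
R(A,B)\,\xi_i\ =\ -2\epsilon_ja_k(A,B)\,\xi_j\ +\ 2\epsilon_ka_j(A,B)\,\xi_k,
\]
which, after inserting $a_s=-\epsilon_s\rho_s$ and using $\epsilon_i\epsilon_j\epsilon_k=-1$, is precisely the first assertion of \eqref{rhov}. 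On the other hand, plugging $\nabla_A\xi_i=-\alpha_j(A)\xi_k+\epsilon_k\alpha_k(A)\xi_j$ directly into $R(A,B)=\nabla_A\nabla_B-\nabla_B\nabla_A-\nabla_{[A,B]}$ and expanding $\nabla\xi_j$, $\nabla\xi_k$ by the same formula with cyclically shifted indices, then collecting coefficients of $\xi_j$ and $\xi_k$, expresses these coefficients as combinations of $d\alpha_s(A,B)$ and wedge products $\alpha_p\wedge\alpha_q$. Comparing with the first expression gives the stated formula $\rho_i=\tfrac12[\epsilon_kd\alpha_i-\epsilon_j\alpha_j\wedge\alpha_k]$.

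The main obstacle is the careful bookkeeping of signs arising from the three values of $\epsilon_s$ (subject to $\epsilon_1\epsilon_2\epsilon_3=-1$) and the indefinite signature of $g$. The scheme mirrors the quaternionic contact case in \cite{IV1}, but each use of $I_iI_j=-\epsilon_kI_k$, of $I_s^2=\epsilon_s$, or of $g(I_s\cdot,I_s\cdot)=-\epsilon_sg(\cdot,\cdot)$ introduces an $\epsilon$-factor that must be tracked consistently across both computations of $R(A,B)\xi_i$ to obtain the precise formula for $\rho_i$.
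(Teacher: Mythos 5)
The paper does not prove this lemma itself --- it is quoted verbatim from \cite{CIZ} --- so there is no internal proof to compare against; judged on its own, your argument is correct and is the standard one for such statements (it mirrors the quaternionic contact case in \cite{IV1}). The decomposition $R(A,B)|_H=R_0+\sum_s a_s I_s$ with $R_0\in sp(n,\mathbb R)$, the trace identification $a_s=-\epsilon_s\rho_s$ (using $\mathrm{tr}(I_sR_0)=0$ and $g(I_s\cdot,I_s\cdot)=-\epsilon_s g$), the computation $R(A,B)-\epsilon_iI_iR(A,B)I_i=2a_jI_j+2a_kI_k$ via $\epsilon_i\epsilon_j\epsilon_k=-1$, the commutator action on $V$ through $\varphi$, and the direct expansion of $R(A,B)\xi_i$ from \eqref{conef} all check out and reproduce \eqref{rjr} and both assertions of \eqref{rhov} with the correct signs.
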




It is shown in \cite{CIZ} that all horizontal Ricci type
contractions of the curvature of the canonical pqc connection can be
expressed in terms of the torsion endomorphism. 
We utilize the following from \cite{CIZ}.

\begin{thrm}
\cite{CIZ}\label{sixtyseven} On a $(4n+3)$-dimensional pqc manifold the horizontal Ricci
tensors $Ric$ and $\zeta_s(X,I_sY)$ are symmetric,  the
horizontal Ricci tensors $\rho_s(X,I_sY), \varrho_s(X,I_sY)$ are
symmetric (1,1) tensors with respect to $I_s$ 
and the next
formulas hold

\begin{gather}\label{ricci}
Ric(X,Y)  = \frac{Scal}{4n}g(X,Y)+(2n+2)\tau(X,Y)
+(4n+10)\mu(X,Y);\\\label{ricciformf}
 \rho_s(X,I_sY)  = \epsilon_s\frac{Scal}{8n(n+2)}g(X,Y)
+\frac12\Bigl[\epsilon_s\tau(X,Y)-\tau(I_sX,I_sY)\Bigr]+2\epsilon_s\mu(X,Y);\\
\label{riccitau}
 \varrho_s(X,I_sY)   = \epsilon_s\frac{Scal}{8n(n+2)}g(X,Y)
+\frac{n+2}{2n}\Bigl[\epsilon_s\tau(X,Y)-\tau(I_sX,I_sY)\Bigr];\\
\label{riccizeta}
\epsilon_s \zeta_s(X,I_sY) \  =\ -\frac{Scal}{16n(n+2)}g(X,Y)-
\frac{2n+1}{4n}\tau(X,Y)+\epsilon_s\frac{1}{4n}\tau(I_sX,I_sY)-\frac{2n+1}{2n}\mu(X,Y);\\
\label{torv}
 T(\xi_{i},\xi_{j}) =\epsilon_k\frac
{Scal}{8n(n+2)}\xi_{k}-[\xi_{i},\xi_{j}]_{H};
\\\label{vertor}
 T(\xi_i,\xi_j,I_kX)
=\rho_k(I_jX,\xi_i)=-\rho_k(I_iX,\xi_j)=\omega_k([\xi_i,\xi_j],X);
\\\label{ricvert1}
 -\epsilon_i\rho_i(\xi_i,\xi_j)-\epsilon_k\rho_k(\xi_k,\xi_j)=\frac{1}{16n(n+2)}\xi_j(Scal).
\end{gather}
For $n=1$ the above formulas hold with $\mu=0$.
\end{thrm}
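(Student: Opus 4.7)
The plan is to derive all the identities of Theorem~\ref{sixtyseven} from the first Bianchi identity \eqref{bian1} applied to suitably chosen arguments and then contracted, using the explicit form \eqref{torha} of the horizontal torsion, the connection equations \eqref{conef}, the curvature-paraquaternionic compatibility \eqref{rjr}--\eqref{rhov}, and crucially the decomposition of the torsion endomorphism recalled in Proposition~\ref{torb}. The key point is that $\tau$ sits in the $[-1]$-component and $\mu$ in the $[3]$-component of the Casimir operator \eqref{e:cross}; projecting the traced Bianchi identity onto these eigenspaces isolates the two tensors and forces the stated coefficients.

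First, I would specialize \eqref{bian1} to horizontal $X,Y,Z,V\in H$. Since $T(X,Y)\in V$ by \eqref{torha}, the quadratic torsion term expands as $-2\sum_{s}\epsilon_s\omega_s(X,Y)\,T(\xi_s,Z,V)$, and then $T(\xi_s,\cdot,\cdot)$ splits into its symmetric part, controlled by $\tau$ via \eqref{tau-1}, and its antisymmetric part, controlled by $\mu$ via \eqref{mus}. This rewrites the cyclic curvature sum purely in terms of $\tau$, $\mu$ and the $\omega_s$. Tracing by setting $X=e_a$, $V=e_a$ and summing yields an expression for $Ric(Y,Z)$; a further contraction produces $Scal$ and pins down the constant multiple of $g$. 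This gives \eqref{ricci}.

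Second, for the three Ricci-type tensors $\rho_s$, $\varrho_s$, $\zeta_s$ I would contract the same Bianchi identity against the paraquaternionic structure, setting $V=I_s e_a$ together with one of $Y=e_a$, $X=e_a$, $Z=e_a$ in turn. Identity \eqref{rjr} relates these three contractions to one another, and the symmetric/antisymmetric decomposition of $T_{\xi_s}$ again feeds in through \eqref{tau-1} and \eqref{mus}. Projecting onto the $[3]$ and $[-1]$ components using $\dag$ separates the $\mu$ and $\tau$ contributions: since $\tau(I_sX,I_sY)$ has prescribed sign under the paraquaternionic action, the combinations $\epsilon_s\tau(X,Y)-\tau(I_sX,I_sY)$ appear naturally in \eqref{ricciformf}, \eqref{riccitau}, \eqref{riccizeta}, while the $\mu$-terms, coming from the $[3]$-invariant part, are simple multiples of $\mu$. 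The already-identified scalar term $Scal/4n$ in $Ric$ then fixes the $Scal/(8n(n+2))$ coefficient in each $\rho_s$, $\varrho_s$, $\zeta_s$.

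Third, for the vertical identities \eqref{torv}, \eqref{vertor}, \eqref{ricvert1} I would switch inputs. Applying Bianchi to $(\xi_i,\xi_j,X)$ and using that $R(\xi_i,\xi_j)$ preserves $H\oplus V$ and the paraquaternionic structure (so that $R(\xi_i,\xi_j)\xi_\cdot$ is determined by the $\rho_s$ via \eqref{rhov}), together with $\nabla\xi_i$ from \eqref{conef}, writes $T(\xi_i,\xi_j)$ in terms of the $\rho_s$ evaluated on vertical vectors. Substituting the horizontal formula \eqref{ricciformf} yields \eqref{torv} and \eqref{vertor}. Finally, differentiating the scalar curvature along $\xi_j$ and comparing with the second Bianchi identity (or equivalently tracing an appropriate $(\nabla_A R)$-identity obtained from \eqref{bian1}) produces \eqref{ricvert1}. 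The $n=1$ case requires only observing that in dimension seven the $[3]$-component of any symmetric endomorphism commuting with all $I_s$ is proportional to the identity and that the decomposition in Proposition~\ref{torb} forces $\mu=0$, so every $\mu$-term drops out automatically.

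The main obstacle is the sign bookkeeping: the $\epsilon_s$ that distinguish the two para complex structures from the complex one propagate through every cyclic permutation, and each contraction of an $I_s$ with an $\omega_t$ or with $\tau$ or $\mu$ must be handled consistently so that the coefficients $(2n+2)$, $(4n+10)$, $(2n+1)/(4n)$, $(n+2)/(2n)$ and $1/(16n(n+2))$ come out correctly. The cleanest route is to compute in the adapted basis of Convention~\ref{conven}, use the trace-free and $[\pm 1]/[3]$-invariance properties \eqref{tau-trfree}--\eqref{propmu} of $\tau$ and $\mu$ to eliminate spurious traces, and only at the end match coefficients by evaluating both sides on a test pair $(X,Y)$ with prescribed behaviour under the paraquaternionic action.
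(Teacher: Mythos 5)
The first thing to say is that the paper contains no proof of Theorem~\ref{sixtyseven} to compare against: the theorem is stated with the citation \cite{CIZ} and is imported wholesale from that reference, so your reconstruction can only be judged on its own terms and against the standard method in the quaternionic/para-contact literature. Measured that way, your strategy is the right one and matches what \cite{CIZ} (and its quaternionic antecedents) actually do: specialize the first Bianchi identity \eqref{bian1} to horizontal arguments, observe that the $\nabla T$ terms die because $T(X,Y)\in V\perp H$ so only the quadratic torsion term $-2\sum_s\epsilon_s\omega_s(X,Y)T(\xi_s,Z,V)$ survives, feed in the decomposition of $T_{\xi_s}$ through \eqref{tau-1} and \eqref{mus}, contract, and use \eqref{rjr}, \eqref{zam} and the Casimir projections \eqref{New21} to separate the $\tau$-, $\mu$- and $Scal$-contributions; the vertical identities then come from the Bianchi identities with vertical entries together with \eqref{rhov} and \eqref{conef}.

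The genuine gap is that your text is a plan, not a proof, for a statement whose entire content is a list of exact numerical coefficients. Nothing in the proposal actually produces $(2n+2)$, $(4n+10)$, $\frac{n+2}{2n}$, $\frac{2n+1}{4n}$ or $\frac{1}{16n(n+2)}$; you acknowledge the ``sign bookkeeping'' as the main obstacle and then defer it entirely, so the one thing that needs verifying is the one thing not done. Two more specific soft spots: (i) the four horizontal trace formulas \eqref{ricci}--\eqref{riccizeta} are not independent outputs of a single traced Bianchi identity --- one needs the interplay of \eqref{zam} (relating $R(X,Y,Z,V)$ to $R(Z,V,X,Y)$, which itself injects torsion terms) with \eqref{rjr} to convert $Ric$, $\rho_s$, $\varrho_s$, $\zeta_s$ into one another, and the linear system this produces is exactly where the coefficients are determined; your sketch treats this as routine projection onto $[3]$ and $[-1]$ parts, which understates where the work is; (ii) for \eqref{ricvert1} you hedge between the first and second Bianchi identity --- this identity involves a derivative of $Scal$ and requires the differentiated (second) Bianchi identity or, equivalently, the expression $\rho_i=\frac12[\epsilon_kd\alpha_i-\epsilon_j\alpha_j\wedge\alpha_k]$ from \eqref{rhov} together with $d^2\alpha_i=0$; it does not follow from the algebraic first Bianchi identity alone. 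So: correct architecture, consistent with the source the paper cites, but not yet a proof of the theorem as stated.
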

Clearly, the condition $Ric=\frac{Scal}{4n} g$ is equivalent to $\tau=\mu=0$. It was shown in \cite{CIZ} that the torsion endomorphism of a pqc manifold vanishes exactly when it is locally para 3-Sasakian 
provided the pqc scalar curvature is nonvanishing  and the dimension is bigger than seven.

It is  established in \cite{CIZ} that the whole curvature is determined from the horizontal curvature. We will use  the next result proved in  \cite{CIZ}.
\begin{thrm}\cite{CIZ}
\label{bianrrre} On a pqc manifold the curvature of the canonical connection satisfies the
equalities:
\begin{multline}\label{vert1}
 R(\xi_i,X,Y,Z)= -(\nabla_X\mu)(I_iY,Z)
 -\frac14\Big[(\nabla_Y\tau)(I_iZ,X)+(\nabla_Y\tau)(Z,I_iX)\Big]\\ +\frac14\Big[(\nabla_Z\tau)(I_iY,X)+
 (\nabla_Z\tau)(Y,I_iX)\Big]
 +\omega_j(X,Y)\rho_k(I_iZ,\xi_i)-\omega_k(X,Y)\rho_j(I_iZ,\xi_i)\\
-\omega_j(X,Z)\rho_k(I_iY,\xi_i)+\omega_k(X,Z)\rho_j(I_iY,\xi_i)
-\omega_j(Y,Z)\rho_k(I_iX,\xi_i)+\omega_k(Y,Z)\rho_j(I_iX,\xi_i).
\end{multline}
\begin{multline}\label{vert2}
 R(\xi_i,\xi_j,X,Y)=(\nabla_{\xi_i}\mu)(I_jX,Y)-(\nabla_{\xi_j}\mu)(I_iX,Y) +\epsilon_j(\nabla_X\rho_k)(I_iY,\xi_i)\\
 -\frac14\Big[(\nabla_{\xi_i}\tau)(I_jX,Y)+(\nabla_{\xi_i}\tau)(X,I_jY)\Big]
 +\frac14\Big[(\nabla_{\xi_j}\tau)(I_iX,Y)+(\nabla_{\xi_j}\tau)(X,I_iY)\Big]\\ +\epsilon_k\frac{Scal}{8n(n+2)}T(\xi_k,X,Y)
 -T(\xi_j,X,e_a)T(\xi_i,e_a,Y)+T(\xi_j,e_a,Y)T(\xi_i,X,e_a),
\end{multline}
where the Ricci 2-forms are given by

\begin{eqnarray}\label{vert023}
3(2n+1)\rho_i(\xi_i,X)=-\epsilon_i\frac14(\nabla_{e_a}\tau)(e_a,X)-\frac34(\nabla_{e_a}\tau)(I_ie_a,I_iX)
+\epsilon_i(\nabla_{e_a}\mu)(X,e_a)\\\nonumber
-\epsilon_i\frac{2n+1}{16n(n+2)}X(Scal);\\
\label{vert024}
3(2n+1)\rho_i(I_kX,\xi_j)=-3(2n+1)\rho_i(I_jX,\xi_k)=-\frac{(2n+1)(2n-1)}{16n(n+2)}X(Scal)\\\nonumber
+2(n+1)(\nabla_{e_a}\mu)(X,e_a)
+\frac{4n+1}4(\nabla_{e_a}\tau)(e_a,X)-\epsilon_i\frac34(\nabla_{e_a}\tau)(I_ie_a,I_iX);
\\ \label{div}
(n-1)(\nabla_{e_a}\tau)(e_a,X)+2(n+2)(\nabla_{e_a}\mu)(e_a,X)-\frac{(n-1)(2n+1)}{8n(n+2)}d(Scal)(X)=0.
\end{eqnarray}
\end{thrm}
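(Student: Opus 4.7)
The plan is to derive all four identities of Theorem~\ref{bianrrre} by a systematic application of the first Bianchi identity \eqref{bian1} together with the antisymmetry formula \eqref{zam}, combined with the structural input already in hand: the splitting-preserving property of $R$ (so $R(A,B)H\subset H$, $R(A,B)V\subset V$), the formula \eqref{rhov} identifying $R(A,B)_{|V}$, and the explicit torsion given by \eqref{torha}, \eqref{torv}, \eqref{vertor} and Proposition~\ref{torb} in terms of $\tau$ and $\mu$.

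For \eqref{vert1}, I would specialize \eqref{zam} to $A=\xi_i$, $B=X$, $C=Y$, $D=Z$ with $X,Y,Z\in H$. Since $R(Y,Z)$ preserves $H$, one has $R(Y,Z,\xi_i,X)=-g(R(Y,Z)X,\xi_i)=0$ by the orthogonality $V\perp H$. Hence
\[
2R(\xi_i,X,Y,Z)\ =\ b(\xi_i,X,Y,Z)+b(X,Y,Z,\xi_i)-b(\xi_i,Y,Z,X)-b(\xi_i,X,Z,Y).
\]
Expanding each $b$ via \eqref{bian1}, the $\nabla T$ contributions split into a horizontal part producing $(\nabla_X\mu)(I_iY,Z)$ and $(\nabla_Y\tau)(I_iZ,X)+(\nabla_Y\tau)(Z,I_iX)$ after invoking \eqref{tau-1}, \eqref{mus}, and the decomposition of $\tau$, $\mu$ into their $[-1]$, $[3]$ parts \eqref{tau-trfree}--\eqref{propmu}. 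The quadratic piece $T(T(A,B),C,D)$, combined with \eqref{torha} and \eqref{vertor}, produces the $\omega_j(\cdot,\cdot)\rho_k(I_i\cdot,\xi_i)$ and $\omega_k(\cdot,\cdot)\rho_j(I_i\cdot,\xi_i)$ products. Collecting and symmetrizing gives \eqref{vert1}.

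For \eqref{vert2}, I apply \eqref{zam} with $A=\xi_i$, $B=\xi_j$, $C=X$, $D=Y$. Now $R(X,Y,\xi_i,\xi_j)=g(R(X,Y)\xi_i,\xi_j)$ is evaluated through \eqref{rhov}, and after inserting \eqref{torv} one sees the $\epsilon_k\frac{Scal}{8n(n+2)}T(\xi_k,X,Y)$ term appear. The Bianchi $b$-terms produce $(\nabla_{\xi_i}\tau)$, $(\nabla_{\xi_j}\tau)$, $(\nabla_{\xi_i}\mu)$, $(\nabla_{\xi_j}\mu)$ through Proposition~\ref{torb}, while the terms $(\nabla_X\rho_k)(I_iY,\xi_i)$ arise from writing $(\nabla_X T)(\xi_i,\xi_j,Y)$ using the definition of $\rho_k$ and \eqref{vertor}. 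The bilinear $T\circ T$ contribution yields the final products $T(\xi_j,X,e_a)T(\xi_i,e_a,Y)-T(\xi_j,e_a,Y)T(\xi_i,X,e_a)$ after replacing $[X,Y]_V$ from \eqref{torha}.

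The Ricci 2-form identities \eqref{vert023}--\eqref{vert024} are then obtained by taking traces of \eqref{vert1}: contracting $Y=e_a$, $Z=I_se_a$ and applying \eqref{ricci}--\eqref{riccizeta} together with \eqref{ricvert1} yields two independent linear combinations of $\rho_i(\xi_i,X)$ and $\rho_i(I_kX,\xi_j)$ in terms of $(\nabla_{e_a}\tau)(e_a,X)$, $(\nabla_{e_a}\tau)(I_ie_a,I_iX)$, $(\nabla_{e_a}\mu)(X,e_a)$ and $X(Scal)$; solving the two-by-two system gives \eqref{vert023} and \eqref{vert024}. Finally, \eqref{div} is a compatibility relation between these two expressions: equating the coefficients of any test combination that must cancel identically forces the stated linear relation among $(\nabla_{e_a}\tau)(e_a,X)$, $(\nabla_{e_a}\mu)(e_a,X)$ and $d(Scal)(X)$.

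The main obstacle is purely combinatorial: tracking the signs $\epsilon_1=\epsilon_2=-\epsilon_3=1$ consistently through every swap of a pair of $I_s$'s (since commutation with $I_s$ yields $+1$ for $s=1,2$ and $-1$ for $s=3$), and verifying that the numerous Bianchi-generated tensorial terms regroup into the exact symmetric pairs $(\nabla_Y\tau)(I_iZ,X)+(\nabla_Y\tau)(Z,I_iX)$ dictated by the $[-1]$-symmetry \eqref{tau-sym} of $\tau$ and the $[3]$-symmetry \eqref{propmu} of $\mu$. This is where the proof lives or dies; the overall strategy follows the quaternionic contact analogue in \cite{IV1}, but each identity requires reverification with the para-structure signs.
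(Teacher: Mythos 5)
The paper does not actually prove this theorem: it is imported verbatim from \cite{CIZ}, so there is no internal proof to compare your argument against. Your outline is nevertheless the standard (and correct) route, the same one used for the quaternionic contact analogue in \cite{IV1} and presumably in \cite{CIZ} itself: the combination \eqref{zam} with $A=\xi_i$, $B=X$ (resp.\ $B=\xi_j$) isolates $2R(\xi_i,X,Y,Z)$ because $R(Y,Z)$ preserves the splitting $H\oplus V$ (resp.\ requires \eqref{rhov} and \eqref{torv} to handle $R(X,Y,\xi_i,\xi_j)$ and $T(\xi_i,\xi_j)$, which is where the $\epsilon_k\frac{Scal}{8n(n+2)}T(\xi_k,X,Y)$ term enters); the $\nabla T$ terms reduce to $\nabla\tau$ and $\nabla\mu$ via \eqref{tau-1} and \eqref{mus}; and the quadratic torsion terms give the $\omega_s(\cdot,\cdot)\,\rho(\cdot,\xi)$ products through \eqref{torha} and \eqref{vertor} --- one checks along the way that the potentially troublesome contribution $(\nabla_XT)(Y,Z,\xi_i)$ vanishes identically once \eqref{conef} is used, so no stray $\alpha$-terms survive. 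The only genuinely vague spot is your treatment of \eqref{div}: it is not obtained by ``equating coefficients of a test combination'' but as the consistency condition of the over-determined linear system produced by contracting \eqref{vert1} in $(Y,Z)$ against $g$ and all three $\omega_s$ for each $i$, subject to the constraint $\rho_i(I_kX,\xi_j)=-\rho_i(I_jX,\xi_k)$ forced by \eqref{vertor}; with that clarification the plan goes through, modulo the sign bookkeeping you already acknowledge.
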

As a consequence of Theorem~\ref{bianrrre}  one gets the next result originally proved in \cite{CIZ}.
\begin{prop}\cite{CIZ}
\label{hflat} A pqc manifold is locally isomorphic to the
paraquaternionic Heisenberg group exactly when the curvature of the
canonical connection restricted to $H$ vanishes, $R_{|_H}=0$.
\end{prop}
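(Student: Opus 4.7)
The ``only if'' direction is immediate from the Introduction: the paraquaternionic Heisenberg group $pQH$ carries a flat canonical pqc connection, and flatness pulls back under local pqc isomorphisms, so in particular $R|_H=0$. For the converse, the strategy is to show that the horizontal vanishing $R|_H=0$ forces the full curvature of the canonical connection to vanish, $R\equiv 0$ on $TM$; the proposition then follows from the uniqueness (up to an $SO(1,2)$-action) of $pQH$ as the flat pqc model, stated in the Introduction and proved in \cite{CIZ}.

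The first step is to kill the torsion tensors $\tau$, $\mu$ and the pqc scalar curvature. Since $R|_H=0$, every horizontal Ricci-type contraction vanishes, and in particular $Ric(X,Y)=R(e_a,X,Y,e_a)=0$. Substituting into \eqref{ricci} gives $\frac{Scal}{4n}g+(2n+2)\tau+(4n+10)\mu=0$. By Proposition~\ref{torb}, $\tau$ lies in the $[-1]$-eigenspace of the Casimir $\dag$ while $\mu$ lies in the $[3]$-eigenspace, and $g$ itself lies in $[3]$ because of \eqref{param}. Separating the $[3]$ and $[-1]$ components of the identity therefore yields $\tau=0$ and $\frac{Scal}{4n}g+(4n+10)\mu=0$; taking the $g$-trace of the latter, using that $\mu$ is trace-free and $\operatorname{tr}_g g=4n$, gives $Scal=0$, whence also $\mu=0$. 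By \eqref{tau-1} and \eqref{mus} this already forces $T(\xi_s,\cdot)|_H\equiv 0$.

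Next I would substitute $\tau=\mu=0$ and $Scal=0$ (hence $\nabla\tau=\nabla\mu=0$ and $dScal=0$) into the vertical curvature identities of Theorem~\ref{bianrrre}. The right-hand sides of \eqref{vert023} and \eqref{vert024} vanish outright, giving $\rho_s(\xi_t,X)=0$ for all $s,t\in\{1,2,3\}$ and $X\in H$. Together with $\rho_s|_H=0$ (a direct consequence of $R|_H=0$), every term on the right of \eqref{vert1} disappears, so $R(\xi_i,X,Y,Z)=0$ for $X,Y,Z\in H$. The identity \eqref{vert2} is handled the same way: the $\tau$-, $\mu$- and $Scal$-terms are manifestly zero, the torsion quadratics vanish because $T(\xi_s,\cdot)|_H=0$, and the remaining $(\nabla_X\rho_k)(I_iY,\xi_i)$ term vanishes once one expands it by the product rule and observes that, because $\nabla$ preserves $H\oplus V$, each of the three summands is $\rho_k$ evaluated on a pair in $(H\times V)\cup(V\times H)$, which we have just shown to vanish. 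Hence $R(\xi_i,\xi_j,X,Y)=0$; by definition this forces $\rho_s(\xi_i,\xi_j)=0$ as well, and with all components of $\rho_s$ now zero \eqref{rhov} gives $R(A,B)\xi_i=0$ for every $A,B\in TM$. Combining everything yields $R\equiv 0$, and the cited uniqueness result of \cite{CIZ} identifies $M$ locally with $pQH$.

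The principal technical obstacle is the $(\nabla_X\rho_k)(I_iY,\xi_i)$ term in \eqref{vert2}, whose vanishing has to be established before $\rho_s$ is known to vanish on all of $TM\times TM$; the key point is that $\nabla(H\oplus V)\subset H\oplus V$ guarantees that the expansion never produces the as-yet-unknown quantity $\rho_k(\xi_s,\xi_t)$. Otherwise the argument is a systematic propagation of $\tau=\mu=Scal=0$ through the formulas imported from \cite{CIZ}.
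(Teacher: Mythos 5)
Your argument is correct, and it follows essentially the route the paper intends: the paper does not actually prove Proposition~\ref{hflat} but cites it from \cite{CIZ} as ``a consequence of Theorem~\ref{bianrrre}'', and your derivation --- extracting $\tau=\mu=0$ and $Scal=0$ from $Ric=0$ via \eqref{ricci} and the $[3]/[-1]$ decomposition, then propagating flatness to the vertical directions through \eqref{vert023}--\eqref{vert024}, \eqref{vert1}, \eqref{vert2} and \eqref{rhov} --- is precisely how that consequence is obtained, including the correct observation that $(\nabla_X\rho_k)(I_iY,\xi_i)$ only ever produces $\rho_k$ on mixed $H\times V$ pairs. The one step genuinely delegated to \cite{CIZ} is passing from $R\equiv 0$ to a local isomorphism with $pQH$, which also uses that the torsion is then parallel and standard (e.g.\ $T(\xi_i,\xi_j)=0$ from \eqref{torv} and \eqref{vertor} once $Scal=0$ and $\rho_s=0$); since the proposition itself is a cited result, that reliance is legitimate.
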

\section{paraquaternionic Heisenberg group and the paraquaternionic Cayley
transform}\label{s:standard structures} Since our goal is to classify
paraquaternionic contact manifolds locally conformal to the  paraquaternionic
Heisenberg group, we recall its definition from \cite{CIZ}, define  the paraquaternionic Cayley transform and show that it is a local paraquaternionic contact authomrphism between the para 3-Sasakian structure on the pseudo-sphere and the paraquaternionic Heisenberg group.

\subsection{The paraquaternionic Heisenberg group}  As a manifold the paraquaternionic Heisenberg group
of topological dimension $4n+3$ is   $G(pH) = pH^n\times Im(pH)$ with
the group law given by $$(q', \omega') = (q_o, \omega_o)_o(q, \omega) = (q_o + q, \omega_o + \omega + 2 Im(q_o\bar q)),$$
where $q,q_o\in pH^n$ and $\omega,\omega_o\in Im(pH)$.

On $G(pH)$ we define the paraquaternionic contact form in paraquaternionic variables as follows
$$\tilde\Theta=(\tilde\Theta_3,\tilde\Theta_1\tilde\Theta_2)=\frac12(d\omega-qd\bar q+dq\bar q).$$
In real coordinates we get
\begin{equation}\label{pqh}
\begin{split}
\tilde\Theta_3=\frac12dx-x^adt^a+t^adx^a-z^ady^a+y^adz^a;\\
\tilde\Theta_1=\frac12dy-y^adt^a-z^adx^a+t^ady^a+x^adz^a;\\
\tilde\Theta_2=\frac12dz-z^adt^a+y^adx^a-x^ady^a+t^adz^a.
\end{split}
\end{equation}
The structure equations of $G(pH)$ are
\begin{equation*}
\begin{split}
d\tilde\Theta_3=2(dt^a\wedge dx^a+dy^a\wedge dz^a);\\
d\tilde\Theta_1=2(dt^a\wedge dy^a+dx^a\wedge dz^a);\\
d\tilde\Theta_2=2(dt^a\wedge dz^a-dx^a\wedge dy^a).
\end{split}
\end{equation*}
The left-invariant horizontal  vector fields $T_a, X_a=J_3T_a, Y_a=-J_1T_a, Z_a=-J_2T_a$ are given by
\begin{equation*}
\begin{split}
T_a=\dta  + 2x^a\dx +2y^a\dy+2z^a\dz;\qquad X_a=\dxa  + 2t^a\dx-2z^a\dy+2y^a\dz;\\
Y_a=\dya  + 2z^a\dx-2t^a\dy-2x^a\dz;\qquad Z_a=\dza  -2y^a\dx+2x^a\dy-2t^a\dz.
\end{split}
\end{equation*}
The horizontal metric of signature (2n,2n) is defined by
$$g(T_a,T_a)=g(X_a,X_a)=-g(Y_a,Y_a)=-g(Z_a,Z_a)=1.$$
The central  (left-invariant vertical) Reeb vector  fields are
$\xi_3=2\dx, \quad \xi_1=2\dy, \quad \xi_2=2\dz
$
and a straightforward calculation shows the following commutation  relations
$$[J_iT_a,T_a]=-2\epsilon_i\xi_i,\qquad [J_iT_a,J_jT_a]=2\epsilon_k\xi_k.
$$
It is easy to verify that the left-invariant flat connection on $G(pH)$ coincides with the canonical pqc connection of the pqc manifold $(G(pH),\tilde\Theta)$.
\subsection{An embeding of the paraquaternionic Heisenberg group $G(pH)$}
Consider the hypersurface 
$$\Sigma\subset pH^n\times pH:\Sigma=(q',p')\in pH\times pH:Re(p')=-|q'|^2.$$
Clearly, $\Sigma$ is the $0$-level set of $\rho=|q'|^2+t$ and 
\be\label{ppo}d\rho=q'd\bar{q'}+dq'\bar{q'}+dt=2(t^adt^a+x^adx^a-y^ady^a-z^adz^a)+dt.
\ee
The standard paraquaternionic structure $J_3,J_1,J_2$ on $\mathbb R^{4n+4}$, induced by the multiplication on the right by the para quaternions $r_3,r_1,r_2\in pH^{n+1}$, is

\begin{equation}\label{rrr}
\begin{split}
J_3dt^a=dx^a,\quad J_1dt^a=dy^a, \quad J_2dt^a=dz^a,\\
J_3dy^a=-dz^a,\quad J_1dx^a=dz^a,\quad J_2dx^a=-dy^a,\\
J_3dt=dx,\quad J_1dt=dy, \quad J_2dt=dz.
\end{split}
\end{equation}
Combining \eqref{ppo} with \eqref{rrr} and comparing with \eqref{pqh} we get
\begin{equation*}
\begin{split}
J_3d\rho=2(t^adx^a-x^adt^a+y^adz^a-z^ady^a)+dx=2\tilde\Theta_3;\\
J_1d\rho=2(t^ady^a+x^adz^a-y^adt^a-z^adx^a)+dy=2\tilde\Theta_1;\\
J_2d\rho=2(t^adz^a-x^ady^a+y^adx^a-z^adt^a)+dz=2\tilde\Theta_2.
\end{split}
\end{equation*}
We identify $G(pH)$ with $\Sigma$ by $(q',\omega')\rightarrow (q',p'=-|q'|^2+\omega')$. Since $dp'=-q'd\bar{q'}-dq'\bar{q'}+d\omega'$, we  write
\[\tilde\Theta=\frac12(d\omega-q'd\bar{q'}+dq'\bar{q'})=\frac12dp'+dq'\bar{q'}.\]
Taking into account that $\tilde\Theta$ is pure imaginary we can  write the last equation in the form
\be\label{phsp}\tilde\Theta=\frac14(dp'-d\bar{p'})+\frac12(dq'\bar{q'}-q'd\bar{q'}).
\ee
\subsection{The para 3-Sasakian pseudo sphere and the paraquaternionic Cayley transform} The second explicit example is the  pqc-structure on the para 3-Sasakian pseudo-sphere. The para 3-Sasakian structure on the pseudo-sphere (hyperboloid) $pS^{4n+3}=\{|q|^2+|p|^2=1\}\subset pH^n\times pH$ is inherited from the standard flat hypersymplectic structure on $\mathbb R^{4n+4}= pH^n\times pH$.  In paraquaternionic variables,  the  pqc 1-form on the pseudo sphere $pS^{4n+3}=\{|q|^2+|p|^2=1\}\subset pH^n\times pH$ is defined as follows
\begin{equation}\label{p3sas1}
\tilde\eta=dq.\bar q+dp.\bar p-q.d\bar q-p.d\bar p.
\end{equation}
We consider  the map from the pseudo-sphere $pS^{4n+3}$ minus the points $\Sigma_0$,  
$$\Sigma_0=(q,p)\in pS^{4n+3}: |p-1|^2=(t-1)^2+x^2-y^2-z^2=0$$
to the paraquaternionic Heisenberg group $G(pH)\cong\Sigma$, defined by 
\[\mathbb C:\Big(pS^{4n+3}-\Sigma_0\Big)\rightarrow\Sigma,\quad  (q',p')=\mathbb C\Big((q,p)\Big), \quad q'=(p-1)^{-1}q,\quad p'=(p-1)^{-1}(p+1)
\]
since 
$Re(p')=Re\Big(\frac{ (\bar p-1)(p+1)}{|p-1|^2}\Big)=\frac{|p|^2-1}{|p-1|^2}=-\frac{|q|^2}{|p-1|^2}=-|q'|^2.
$

The inverse map  $(q,p)=\mathbb C^{-1}\Big((q',p')\Big)$ is given by
\[q=2(p'-1)^{-1}q',\quad  p=(p'-1)^{-1}(p'+1).
\]
We call this map \emph{paraquaternionic Cayley transform}.

An easy calculation gives
\begin{equation}\label{cay}
\begin{split}
dp'=-2(p-1)^{-1}.dp.(p-1)^{-1};\quad 
dq'=(p-1)^{-1}.\Big[dq-dp.(p-1)^{-1}.q  \Big].
\end{split}
\end{equation}
Using \eqref{phsp} together with \eqref{cay}, we calculate

\be\label{cal}
\begin{split}
2\mathbb C^*\tilde\Theta=-(p-1)^{-1}.dp.(p-1)^{-1}+(\bar p-1)^{-1}.d\bar p.(\bar p-1)^{-1}\\
+(p-1)^{-1}.\Big[dq-dp.(p-1)^{-1}.q\Big].\bar q.(\bar p-1)^{-1}\quad
- (p-1)^{-1}.q.\Big[d\bar q-\bar q.(\bar p-1)^{-1}.d\bar p\Big].(\bar p-1)^{-1}\\
=(p-1)^{-1}.\Big[dq.\bar q-q.d\bar q\Big].(\bar p-1)^{-1}\\
- (p-1)^{-1}.\Big[dp.(p-1)^{-1}(\bar p-1)+|q|^2dp.(p-1)^{-1}].(\bar p-1)^{-1}\\
+ (p-1)^{-1}.\Big[(p-1)(\bar p-1)^{-1}.d\bar p+|q|^2.(\bar p-1)^{-1}.d\bar p].(\bar p-1)^{-1}\\
=(p-1)^{-1}.\Big[dq.\bar q-q.d\bar q\Big].(\bar p-1)^{-1}
-(p-1)^{-1}.\Big[dp.(p-1)^{-1}(\bar p-\bar p.p)].(\bar p-1)^{-1}\\
+(p-1)^{-1}.\Big[(p-p\bar p)(\bar p-1)^{-1}.d\bar p].(\bar p-1)^{-1}\\
=(p-1)^{-1}.\Big[dq.\bar q-q.d\bar q+dp.\bar p-p.d\bar p\Big].(\bar p-1)^{-1}
=\frac1{|p-1|^2}\lambda.\tilde\eta.\bar{\lambda},
\end{split}
\ee
where $\lambda=\frac{|p-1|}{p-1}$ is a unit paraquaternion and $\tilde\eta$ is the standard paraquaternionic contact form on the pseudo-sphere $pS^{4n+3}$, given by \eqref{p3sas1}.

Since $p-1=2(p'-1)^{-1}$ we  have $\lambda=\frac1{|p'-1|}(p'-1)$ and we can put \eqref{cal} into the form
\be\label{cal-}
\lambda.(\mathbb C^*)^{-1}.\bar{\lambda}=\frac8{|p'-1|^2}\tilde\Theta.
\ee
Thus, we prove the next
\begin{prop}\label{phps}
The paraquaternionic Heisenberg group $G(pH)$ and the para 3-Sasakian pseudo-sphere $pS^{4n+3}$ are locally pqc conformally equivalent via the paraquaternionic Cayley transform
\end{prop}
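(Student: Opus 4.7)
The plan is to verify the conformality directly by computing the pullback $\mathbb{C}^*\tilde\Theta$ of the left-invariant contact form on $G(pH)$ (expressed via the embedding $\Sigma\subset pH^n\times pH$ in formula \eqref{phsp}) and to show it agrees, up to a nowhere-vanishing positive scalar factor and an $SO(1,2)$-valued rotation, with the standard para 3-Sasakian form $\tilde\eta$ on the pseudo-sphere given by \eqref{p3sas1}. Since pqc conformal equivalence is precisely the relation $\bar\eta=\nu\,\Psi\,\eta$ with $\Psi\in SO(1,2)$ and $\nu$ a nowhere vanishing smooth function, producing such an identity at the level of the $pH$-valued 1-forms (conjugation by a unit para quaternion acts as the corresponding $SO(1,2)$ rotation on the imaginary part) is exactly what is needed.

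First I would record the differentials of the Cayley transform. Starting from $q'=(p-1)^{-1}q$ and $p'=(p-1)^{-1}(p+1)$, and using the identity $d(p-1)^{-1}=-(p-1)^{-1}\,dp\,(p-1)^{-1}$ for para quaternion valued maps (valid wherever $p-1$ is invertible, i.e.\ off $\Sigma_0$), one obtains the two key formulas in \eqref{cay}. I would then substitute these into \eqref{phsp} written for the primed variables,
\[
2\tilde\Theta \;=\; \tfrac{1}{2}(dp'-d\bar{p'}) + (dq'\cdot\bar{q'} - q'\cdot d\bar{q'}).
\]
Next I would collect the four resulting blocks according to whether they contain $dp$, $d\bar p$, $dq$, or $d\bar q$, factoring $(p-1)^{-1}$ on the left and $(\bar p-1)^{-1}$ on the right throughout.

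The decisive algebraic step is the simplification of the $dp$- and $d\bar p$-coefficients using the sphere relation $|p|^2=1-|q|^2$, equivalently $\bar p\cdot p=p\cdot\bar p=1-|q|^2$. After this substitution the "inner'' factors $(p-1)^{-1}(\bar p-1)+|q|^2(p-1)^{-1}$ collapse to $\bar p-\bar p\cdot p$, and the corresponding expression on the other side collapses to $p-p\cdot\bar p$; combined with the $dq\cdot\bar q-q\cdot d\bar q$ piece this gives exactly $dq\cdot\bar q-q\cdot d\bar q+dp\cdot\bar p-p\cdot d\bar p=\tilde\eta$ sandwiched between the scalar $(p-1)^{-1}$ and $(\bar p-1)^{-1}=\overline{(p-1)^{-1}}$. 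Introducing the unit paraquaternion $\lambda=|p-1|/(p-1)$ (well-defined off $\Sigma_0$) rewrites this as $\frac{1}{|p-1|^2}\lambda\cdot\tilde\eta\cdot\bar\lambda$.

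Conjugation by $\lambda$ preserves $\mathrm{Im}\,pH$ and acts there as an element $\Psi\in SO(1,2)$ (since the neutral form $\langle p,q\rangle=\mathrm{Re}(\bar p q)$ has signature $(2,2)$ and restricts to $(1,2)$ on the imaginary part, and $\lambda\bar\lambda=1$). The scalar $1/|p-1|^2$ is smooth and positive on $pS^{4n+3}\setminus\Sigma_0$, so the identity $\mathbb{C}^*\tilde\Theta=\tfrac{1}{2|p-1|^2}\lambda\cdot\tilde\eta\cdot\bar\lambda$ exhibits $\tilde\eta$ and $\mathbb{C}^*\tilde\Theta$ as pqc conformally related, proving the proposition. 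The main obstacle is the bookkeeping in the middle step: paraquaternion multiplication is non-commutative and $pH$ contains zero divisors, so I must restrict to the domain where $p-1$ is invertible (precisely the complement of $\Sigma_0$) and I cannot freely move scalar factors like $|q|^2$ past the non-central quantities $(p-1)^{-1}$; writing everything as $(p-1)^{-1}\,[\,\cdot\,]\,(\bar p-1)^{-1}$ before simplifying is the cleanest way to avoid errors.
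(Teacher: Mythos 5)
Your proposal is correct and follows essentially the same route as the paper: differentiate the Cayley transform to get \eqref{cay}, substitute into \eqref{phsp}, use the pseudo-sphere relation $|p|^2+|q|^2=1$ to collapse the $dp$, $d\bar p$ blocks, and arrive at $2\mathbb{C}^*\tilde\Theta=\frac{1}{|p-1|^2}\lambda\cdot\tilde\eta\cdot\bar\lambda$ with $\lambda=|p-1|/(p-1)$ a unit paraquaternion whose conjugation action realizes the required $SO(1,2)$ rotation. Your cautionary remarks about non-commutativity, zero divisors, and restricting to the complement of $\Sigma_0$ match the paper's computation \eqref{cal} exactly.
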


\section{paraquaternionic contact conformal curvature. Proof of Theorem~\ref{main1}}
In this section we define the paraquaternionic contact conformal curvature and prove Theorem~\ref{main1}.
\subsection{Conformal transformations}
\label{s:conf transf}

A conformal paraquaternionic contact transformation between two
paraquaternionic contact manifold is a diffeomorphism $\Phi$, which
satisfies $\Phi^*\eta=\mu\ \Psi\cdot\eta$ for some positive smooth
function $\mu$ and some matrix $\Psi\in SO(1,2)$ with smooth functions
as entries, where $\eta=(\eta_1,\eta_2,\eta_3)^t$ is considered as
an element of $\mathbb{R}^3$. The canonical pqc connection introduced in \cite{CIZ} does not
change under the action of $SO(1,2)$, i.e., the canonical connection of $\Psi\cdot\eta$ and $%
\eta$ coincides. Hence, as we study pqc conformal transformations we may consider
only transformations $\Phi^*\eta\ =\ \mu\ \eta$.

\subsection{Paraquaternionic conformal transformations}
 Let $h$ be a positive smooth function on a pqc
manifold $(M, \eta)$. Let $\bar\eta=\frac{1}{2h}\eta$ be a conformal
deformation of the pqc structure $\eta$. We will denote the objects related
to $\bar\eta$ by over-lining the same object corresponding to $\eta$. Thus,
$$d\bar\eta=-(2h^2)^{-1}dh\wedge\eta\ +(2h)^{-1}d\eta, \qquad  \bar
g=(2h)^{-1}g.$$

The new triple $\{\bar\xi_1,\bar\xi_2,\bar\xi_3\}$, determined by
the conditions \eqref{xi} defining the Reeb vector fields, is
$$\bar\xi_s\ =\ 2h\,\xi_s\ +\ I_s\nabla h,$$  where $\nabla h$ is the
horizontal gradient defined by $\nabla h=dh(e_a)e_a, \quad g(\nabla h,X)=dh(X).$

The horizontal hyperbolic sub-Laplacian and the norm of the horizontal gradient are
defined by $\triangle_h h\ =\ tr^g_H(\nabla^2h)\ = g(e_a,e_a)\
\nabla^2h(e_a,e_a )$, $|\nabla h|^2\ =\
g(e_a,e_a)dh(e_a)\,dh(e_a)$, respectively. The canonical pqc connections $\nabla$ and
$\bar\nabla$ are connected by a (1,2)-tensor S,
\begin{equation}  \label{qcw2}
\bar\nabla_AB=\nabla_AB+S_AB, \qquad A,B\in\Gamma(TM).
\end{equation}
Condition \eqref{torha} yields
$
g(S_XY,Z)-g(S_YX,Z)=h^{-1}\sum_{s=1}^3\epsilon_s\omega_s(X,Y)dh(I_sZ),
$
while $\bar\nabla\bar g=0$ implies
$g(S_XY,Z)+g(S_XZ,Y)=-h^{-1}dh(X)g(Y,Z)$.
The last two equations determine $g(S_XY,Z)$,  
\begin{multline}  \label{Ivan4}
g(S_XY,Z)=-(2h)^{-1}\{dh(X)g(Y,Z)+\sum_{s=1}^3\epsilon_sdh(I_sX)\omega_s(Y,Z)\\+
dh(Y)g(Z,X)-\sum_{s=1}^3\epsilon_sdh(I_sY)\omega_s(Z,X)-dh(Z)g(X,Y)-%
\sum_{s=1}^3\epsilon_sdh(I_sZ)\omega_s(X,Y)\}.
\end{multline}
Using Theorem~\ref{biqcon} and after some calculations, we obtain
\begin{multline}  \label{New20}
g(\bar T_{\bar\xi_i}X,Y)-2hg(T_{\xi_i}X,Y)-g(S_{\bar\xi_i}X,Y)\\
=-\nabla dh(X,I_iY)-\epsilon_ih^{-1}[dh(I_kX)dh(I_jY)-dh(I_jX)dh(I_kY)].
\end{multline}
The identity $d^2=0$ yields 
$\nabla^2h(X,Y)-\nabla^2h(Y,X)=-dh(T(X,Y)).$ 
Applying \eqref{torha}, we have
\begin{equation}  \label{symdh}
\nabla^2h(X,Y)=[\nabla^2h]_{[sym]}(X,Y)+\sum_{s=1}^3 \epsilon_sdh(\xi_s)\omega_s(X,Y),
\end{equation}
where $[.]_{[sym]}$ denotes the symmetric part of the corresponding
(0,2)-tensor.  

Decompose \eqref{New20} into [3] and [-1] parts
according to \eqref{New21}, use the properties of the torsion
tensor $T_{\xi_i}$ and \eqref{deftau} to come to the next
transformation formulas 
\begin{prop}
Let $\bar\eta=\frac{1}{2h}\eta$ be a pqc conformal transformation of a given pqc structure $\eta$. Then the two parts of the torsion endomorphism transform as follows
\begin{equation}\label{defor}
\begin{split}
\bar\tau=\tau+h^{-1}[\nabla^2h]_{[sym][-1]},\\
\bar\mu=\mu+(2h)^{-1}[\nabla^2h-2h^{-1}dh\otimes dh]_{[3][0]},
\end{split}
\end{equation}
where $[\nabla^2h]_{[sym][-1]}$ denotes the (-1)-component of the symmetric part of the horizontal Hessian and $[\nabla^2h-2h^{-1}dh\otimes dh]_{[3][0]}$ is the trace-free part of the (3)-component. Explicitly,
\begin{equation}\label{pdefor}
\begin{split}
[\nabla^2h]_{[sym][-1]}(X,Y)=\frac14\Big[3\C^2h(X,Y)+\sum_{s=1}^3\epsilon_s\C^2h(I_sX,I_sY)-4\sum_{s=1}^3\epsilon_sdh(\xi_s)\f_s(X,Y)  \Big];\\
[\nabla^2h-2h^{-1}dh\otimes dh]_{[3][0]}(X,Y)=\frac14\Big[\C^2h(X,Y)-\sum_{s=1}^3\epsilon_s\C^2h(I_sX,I_sY)\\-\frac2{h}\Big(dh(X)dh(Y)-\sum_{s=1}^3\epsilon_sdh(I_sX)dh(I_sY)\Big)\Big]-\frac1{4n}\Big(\triangle_hh-\frac2{h}|\C h|^2 \Big)g(X,Y).
\end{split}
\end{equation}
The tensor $g(S_{\bar\xi_i}X,Y) $ is given by 
\begin{gather}\label{qcw3}
g(S_{\bar\xi_i}X,Y) \ =\ \frac{1}{4}\Big[\nabla^2h(X,I_iY)-\nabla^2h(I_iX,Y) -\epsilon_i\nabla^2h(I_jX,I_kY)+\epsilon_i\nabla^2h(I_kX,I_jY)\Bigr] \\\nonumber
\hskip.8truein +\ (2h)^{-1}\Bigl[\epsilon_idh(I_kX)dh(I_jY)-\epsilon_idh(I_jX)dh(I_kY)-
dh(I_iX)dh(Y)+dh(X)dh(I_iY)\Bigr] \\\nonumber
\hskip.11truein \ + \ \frac{1}{4n}\left(-\triangle h+2h^{-1}|\nabla
h|^2\right)\omega_i(X,Y) +\epsilon_idh(\xi_k)\omega_j(X,Y)-\epsilon_idh(\xi_j)\omega_k(X,Y).
\end{gather}
\end{prop}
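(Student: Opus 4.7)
The plan is to invert the identity \eqref{New20} using the $Sp(n,\mathbb{R})Sp(1,\mathbb{R})$-equivariant splitting of $\mathrm{End}(H)$ into its $[3]$ and $[-1]$ isotypic components from \eqref{New21}, realized as eigenspace projection for the Casimir $\dag$ of \eqref{e:cross}. By Proposition~\ref{torb}, $\tb_{\xi_i}$ is the $[-1]$-part and is reconstructed from $\tau$ via \eqref{tau-1}, while $\ta_{\xi_i}$ is the $[3]$-part and is reconstructed from $\mu$ via \eqref{mus}; the same holds for the barred objects. Accordingly, I would symmetrize and antisymmetrize \eqref{New20} in $(X,Y)$, average cyclically in $i$ exactly as in the definition \eqref{deftau}, and extract $\bar\tau-\tau$ and $\bar\mu-\mu$ in closed form. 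The projection formulas $P_{[-1]}\Phi=\tfrac14\bigl[3\Phi+\sum_s\epsilon_s\Phi(I_s\cdot,I_s\cdot)\bigr]$ and $P_{[3]}\Phi=\tfrac14\bigl[\Phi-\sum_s\epsilon_s\Phi(I_s\cdot,I_s\cdot)\bigr]$, obtained directly from \eqref{e:cross}, will be the workhorse of the computation.

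To obtain $\bar\tau$, I would substitute $Y\mapsto I_iY$ in \eqref{New20}, sum with weights $-\epsilon_i$ over the cyclic triples per \eqref{deftau}, and take the symmetric part in $(X,Y)$. The $2h\,g(T_{\xi_i}\cdot,\cdot)$ contribution collapses to $2h\tau(X,Y)$ via \eqref{tau-1}; the Hessian term $-\nabla^2h(X,I_i(I_iY))=-\epsilon_i\nabla^2h(X,Y)$, together with its $I_jI_k$-twisted partners, should assemble into exactly the $[-1]$-projection of the symmetric part of $\nabla^2 h$. The antisymmetric piece of $\nabla^2h$ equals $\sum_s\epsilon_s dh(\xi_s)\omega_s$ by \eqref{symdh}, producing the $-4\sum_s\epsilon_s dh(\xi_s)\omega_s$ correction in \eqref{pdefor}, while the $h^{-1}dh(I_kX)dh(I_jY)$ terms drop out by antisymmetry in $(X,Y)$ after the cyclic sum. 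For $\bar\mu$ the parallel argument uses the antisymmetric projection with weights $+\epsilon_i$: the Hessian contributes its $[3]$-part, the quadratic $dh$-terms collapse to $-2h^{-1}\bigl[dh(X)dh(Y)-\sum_s\epsilon_s dh(I_sX)dh(I_sY)\bigr]$, and the trace is removed by subtracting $\frac{1}{4n}(\triangle_h h-2h^{-1}|\nabla h|^2)g$, since $\mu$ must be trace-free by Proposition~\ref{torb}. Finally, to derive \eqref{qcw3} I would reassemble $\bar T_{\bar\xi_i}=\bar\tb_{\bar\xi_i}+\bar\ta_{\bar\xi_i}$ from the now-known $\bar\tau,\bar\mu$ via \eqref{tau-1} and \eqref{mus}, plug back into \eqref{New20}, and solve for $g(S_{\bar\xi_i}X,Y)$; the $\frac{1}{4n}(-\triangle h+2h^{-1}|\nabla h|^2)\omega_i$ term originates in the trace-removal for $\bar\mu$, and the $\epsilon_i dh(\xi_k)\omega_j-\epsilon_i dh(\xi_j)\omega_k$ terms come from the $\bar\tb$-reassembly through \eqref{tau-1}.

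The hard part will be the combinatorial bookkeeping of signs under the split signature $\epsilon_1=\epsilon_2=-\epsilon_3=1$, which changes how paraquaternionic identities mix indices relative to the quaternionic case of \cite{IV1}. Verifying that the mixed cross-terms $\nabla^2h(I_jX,I_kY)-\nabla^2h(I_kX,I_jY)$ appearing in \eqref{qcw3} arise correctly from the $[3]$-projection together with the $\bar\ta$-reassembly is the most delicate step; it amounts to tracking roughly a dozen sign factors through the cyclic summation and the symmetrizations, but is otherwise routine paraquaternionic linear algebra.
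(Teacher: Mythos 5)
Your proposal is correct and follows essentially the same route as the paper: the paper's proof is precisely to decompose \eqref{New20} into its $[3]$ and $[-1]$ parts via \eqref{New21}, use \eqref{symdh}, the torsion properties of Proposition~\ref{torb} and the definitions \eqref{deftau} to read off $\bar\tau-\tau$, $\bar\mu-\mu$ and then $g(S_{\bar\xi_i}X,Y)$. Your explicit Casimir projection formulas and the handling of the antisymmetric Hessian part and of the quadratic $dh$-terms match the computation the paper carries out.
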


\subsection{paraquaternionic contact conformal curvature}

Let $(M,g,\mathbb{pQ})$ be a (4n+3)-dimensional pqc manifold. We consider the
symmetric (0,2) tensor $L$ defined on $H$ by the equality
\begin{multline}  \label{lll}
L=\Bigl(\frac{1}{4(n+1)}Ric_{[-1]}+\frac{1}{2(2n+5)}Ric_{[3][0]}+ \frac{%
1}{32n(n+2)}Scal\,g\Bigr)
=\frac12\tau +\mu+\frac{Scal}{32n(n+2)}\,g,
\end{multline}
where $Ric_{[-1]}$ is the [-1]-part of the horizontal Ricci tensor, $Ric_{[3][0]}$ is the trace-free [3]-part of $Ric$ and we use the identities in Theorem~\ref{sixtyseven} to obtain the second equality.

Let us denote the trace-free part of $L$ with $L_0$, hence,
\begin{equation}  \label{l0}
L_0=\frac{1}{4(n+1)}Ric_{[-1]}+\frac{1}{2(2n+5)}Ric_{[3][0]}=\frac12\tau+\mu.
\end{equation}

The Kulkarni-Nomizu product of two (not
necessarily symmetric)  tensors is defined by
\begin{multline*}
(A\owedge B)(X,Y,Z,V):=A(X,Z)B(Y,V)+
A(Y,V)B(X,Z)-A(Y,Z)B(X,V)-A(X,V)B(Y,Z).
\end{multline*}
 We  note explicitly the following usual conventions 
$
I_s L\, (X,Y) = g(I_s L X,Y) = -L (X,I_s Y).
$

Now, define the (0,4) tensor $PWR$ on $H$ as follows
\begin{multline}\label{qcwdef}
PWR(X,Y,Z,V)=R(X,Y,Z,V)+(g\owedge L)(X,Y,Z,V)-\sum_{s=1}^3\epsilon_s(\omega _{s}\owedge
I_{s}L)(X,Y,Z,V)\\
+\frac{1}{2}\sum_{(i,j,k)}\epsilon_i\omega_i(X,Y)\Bigl[L(Z,I_iV)-L(I_iZ,V)-\epsilon_iL(I_jZ,I_kV)+\epsilon_iL(I_kZ,I_jV) %
\Bigr] \\
+\sum_{s=1}^3\epsilon_s\omega_s(Z,V)\Bigl[L(X,I_sY)-L(I_sX,Y)\Bigr]
-\frac{1}{2n}(tr L)\sum_{s=1}^3\epsilon_s\omega_s(X,Y)\omega_s(Z,V),
\end{multline}
where $\sum_{(i,j,k)}$ denotes the cyclic sum.

A substitution of \eqref{lll} and \eqref{l0} in \eqref{qcwdef},
invoking Proposition~\ref{torb} gives
\begin{multline}  \label{qcwdef1}
PWR(X,Y,Z,V)= R(X,Y,Z,V)+  (g\owedge L_0)(X,Y,Z,V)-\sum_{s=1}^3\epsilon_s(\omega_s\owedge
I_sL_0)(X,Y,Z,V)\\
+\frac12\sum_{s=1}^3\epsilon_s\Bigl[\omega_s(X,Y)\Bigl\{\tau(Z,I_sV)-\tau(I_sZ,V)\Bigr\} %
+ \omega_s(Z,V)\Bigl\{\tau(X,I_sY)-\tau(I_sX,Y)+4\mu(X,I_sY)\Bigr\}\Bigr]\\
+\frac{Scal}{32n(n+2)}\Big[(g\owedge
g)(X,Y,Z,V)-\sum_{s=1}^3\epsilon_s\Bigl((\omega_s\owedge\omega_s)(X,Y,Z,V)
+4\omega_s(X,Y)\omega_s(Z,V)\Bigr) \Big].
\end{multline}
\begin{prop}
\label{trfree} The tensor $PWR$ is completely trace-free, i.e.
\begin{equation*}
Ric(PWR)=\rho_s(PWR)=\varrho_s(PWR)=\zeta_s(PWR)=0.
\end{equation*}
\end{prop}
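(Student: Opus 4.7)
The plan is to verify each of the four trace-vanishings by direct substitution into \eqref{qcwdef1}, using the explicit Ricci formulas from Theorem~\ref{sixtyseven} and the type decomposition of $\tau$, $\mu$ recorded in Proposition~\ref{torb}. The correction terms added to $R$ in \eqref{qcwdef} are engineered precisely so that the horizontal Ricci-type traces of the Kulkarni--Nomizu and $\omega_s\owedge I_sL$ blocks cancel those of $R$ block by block, and the proof amounts to checking that the numerical coefficients match.

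First I would compute the pqc Ricci trace $Ric(PWR)(Y,Z)=PWR(e_a,Y,Z,e_a)$ in an adapted orthonormal basis as in Convention~\ref{conven}(b). The basic tracing identities are that summation of $g(e_a,Z)\,M(Y,e_a)$ weighted by $g(e_a,e_a)$ returns $M(Y,Z)$, and that tracing through $\omega_s(e_a,\cdot)$ introduces an $I_s$-shift carrying the sign $\epsilon_s$. Applying these to $g\owedge L$, to $\omega_s\owedge I_sL$, to the cyclic-sum term, to the $\omega_s(Z,V)\{\dots\}$ term, and to the $\omega_s(X,Y)\omega_s(Z,V)$ term reduces the trace to a $(0,2)$-tensor built from $L$, its $I_s$-shifts $L(I_sX,I_sY)$, $trL$, and $g$. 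Expanding $L=\frac12\tau+\mu+\frac{Scal}{32n(n+2)}g$, using \eqref{tau-sym} and \eqref{propmu} to simplify the $I_s$-shifts, and substituting \eqref{ricci}, the $g$-pure, $\tau$-pure, and $\mu$-pure contributions vanish separately; this step pins down the three coefficients appearing in \eqref{lll}.

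Next I would repeat the procedure for the three $sp(1,\mathbb R)$-traces $\rho_s$, $\varrho_s$, $\zeta_s$. For $\rho_s(PWR)$ one traces the last two slots against $e_a$ and $I_se_a$: the curvature contribution is \eqref{ricciformf}, while the corrections produce, after type decomposition, pieces proportional to $\tau(I_sX,I_sY)$, $\tau(X,Y)$, $\mu(X,Y)$, and $g(X,Y)$ with coefficients that exactly match \eqref{ricciformf}. The same procedure handles $\varrho_s$ using \eqref{riccitau} (trace in the first two slots) and $\zeta_s$ using \eqref{riccizeta} (mixed trace). Once each block is split into its $g$, $\tau$, and $\mu$ pieces the identities of Proposition~\ref{torb} make the verification mechanical.

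The main obstacle is purely combinatorial bookkeeping: the indefinite horizontal metric, the three signs $\epsilon_s$, and the asymmetry $I_1^2=I_2^2=\mathrm{id}$ versus $I_3^2=-\mathrm{id}$ produce many mixed-sign terms that must all be tracked carefully. The delicate point is that the coefficients $\tfrac1{4(n+1)}$, $\tfrac1{2(2n+5)}$, $\tfrac1{32n(n+2)}$ in the definition \eqref{lll} of $L$ are uniquely determined by insisting that $Ric(PWR)$ vanish on each of the three summands $[-1]$, $[3]$-trace-free, and pure trace; once they are so fixed, the remaining three traces $\rho_s,\varrho_s,\zeta_s$ vanish automatically, thanks to the algebraic relations \eqref{rjr}--\eqref{ricvert1} linking the various Ricci pieces of $R$ to the same tensors $\tau$, $\mu$, and $Scal\,g$.
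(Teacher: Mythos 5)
Your plan is correct and is essentially the paper's own proof: the paper likewise verifies the four traces by direct substitution, the only cosmetic difference being that it first rewrites $\tau$, $\mu$ and all four Ricci-type traces of $R$ in terms of $L$ (equations \eqref{t01}, \eqref{u01}, \eqref{ricis}) before tracing \eqref{qcwdef}, whereas you keep $\tau$, $\mu$, $Scal$ as the working variables via \eqref{qcwdef1}. One small caution: the claim that $\rho_s,\varrho_s,\zeta_s$ vanish ``automatically'' once the coefficients in \eqref{lll} are fixed by $Ric(PWR)=0$ is an overstatement --- the matching is overdetermined and each of \eqref{ricciformf}--\eqref{riccizeta} must be checked against the corresponding trace of the correction terms, as you in fact propose to do in your second paragraph.
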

\begin{proof}
Proposition~\ref{torb}  and
\eqref{lll} imply  the following identities
\begin{align}\label{t01}
\tau(X,Y) =\frac12\Big[3L(X,Y)+L(I_1X,I_1Y)+L(I_2X,I_2Y)-L(I_3X,I_3Y)\Big];\hspace{2.4cm}
\\\label{u01}
\mu(X,Y)  =\frac14\Big[L(X,Y)-L(I_1X,I_1Y)
-L(I_2X,I_2Y)+L(I_3X,I_3Y)-\frac1n tr\,L\,g(X,Y)\Big];
\\\label{t1}
T(\xi_{i},X,Y)=
-\frac{1}{2}\Big[L(I_{i}X,Y)+L(X,I_{i}Y)\Big]+\mu(I_{i}X,Y)
\hspace{4.5cm}\\\notag
 =-\frac{1}{4}L(I_{i}X,Y)-\frac{3}{4}L(X,I_{i}Y)+\frac{1}{4}\epsilon_i
L(I_{k}X,I_{j}Y)-\frac{1}{4}\epsilon_iL(I_{j}X,I_{k}Y)-\frac{1}{4n}(tr\,L)\,\f_i(X,Y).
\end{align}
After substituting \eqref{t01} and \eqref{u01} into the first
four equations of Theorem~\ref{sixtyseven}, we derive

\begin{equation}\label{ricis}
\begin{aligned}
Ric(X,Y)=\frac{2n+3}{2n}tr\,L\,g(X,Y)\hspace{7.5cm}\\+\frac{8n+11}{2}L(X,Y)- \frac32\Bigl[
\epsilon_iL(I_iX,I_iY)+\epsilon_jL(I_jX,I_jY)+\epsilon_kL(I_kX,I_kY)\Bigr];\\
\rho_i(X,Y)=L(X,I_iY)-L(I_iX,Y)-\frac1{2n}tr L\,\omega_i(X,Y); \hspace{4cm}\\
\varrho_i(X,Y)=- \frac1n
tr\,L\,\omega_i(X,Y)\hspace{8cm}\\-\frac{n+2}{2n}\Bigl[%
L(I_iX,Y)-L(X,I_iY)-\epsilon_iL(I_kX,I_jY)+\epsilon_iL(I_jX,I_kY)\Bigr]; \\ 
\zeta_i(X,Y)=\frac{2n-1}{8n^2}tr\,L\,\omega_i(X,Y)\hspace{7.5cm}
\\+\frac3{8n}L(I_iX,Y)-\frac{8n+3}{8n}L(X,I_iY)- \frac1{8n}\epsilon_i\Bigl[%
L(I_kX,I_jY)-L(I_jX,I_kY)\Bigr].  
\end{aligned}
\end{equation}
Take the corresponding traces in \eqref{qcwdef} and use
\eqref{ricis} to  verify the claim.
\end{proof}
We outline the following
\begin{thrm}
\label{bianrrr} On a pQC manifold the curvature of the canonical connection satisfies the
equalities:
\begin{multline}\label{zamiana}
 R(X,Y,Z,V)-R(Z,V,X,Y)=-2\sum_{s=1}^3\epsilon_s\Big[\omega_s(X,Y)\mu(I_sZ,V)-
\omega_s(Z,V)\mu(I_sX,Y)\Big]\\+\frac12\sum_{s=1}^3\epsilon_s\Big[\omega_s(Y,Z)\Big(\tau(I_sX,V)+\tau(X,I_sV)\Big)+\omega_s(X,V)\Big(\tau(I_sZ,Y)+\tau(Z,I_sY)\Big) \Big]\\
 -\frac12\sum_{s=1}^3\epsilon_s\Big[\omega_s(X,Z)\Big(\tau(I_sY,V)+\tau(Y,I_sV)\Big)+
\omega_s(Y,V)\Big(\tau(I_sZ,X)+\tau(Z,I_sX)\Big)\Big].
\end{multline}
The [3]-componenet of the horizontal curvature with respect to the first two arguments is given by
\begin{multline}\label{comp1}
 3R(X,Y,Z,V)+\sum_{s=1}^3\epsilon_sR(I_sX,I_sY,Z,V)\\
= 2\Big[g(Y,Z)\tau(X,V)+g(X,V)\tau(Z,Y)\Big]-2\Big[g(Z,X)\tau(Y,V)+ g(V,Y)\tau(Z,X)\Big]
\\
+2\sum_{s=1}^3\epsilon_s\Big[\omega_s(Y,Z)\tau(X,I_sV)+\omega_s(X,V)\tau(Y,I_sZ)\Big]
-2\sum_{s=1}^3\epsilon_s\Big[\omega_s(X,Z)\tau(Y,I_sV)+\omega_s(Y,V)\tau(X,I_sZ)\Big]
\\
-2\sum_{s=1}^3\epsilon_s\Big[\omega_s(X,Y)\Big(\tau(Z,I_sV)-\tau(I_sZ,V)\Big)-4\omega_s(Z,V)\mu(I_sX,Y)\Big] +\frac{Scal}{2n(n+2)}\sum_{s=1}^3\epsilon_s
\omega_s(X,Y)\omega_s(Z,V).
\end{multline}
\end{thrm}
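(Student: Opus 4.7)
Both identities stem from the first Bianchi identity and the explicit form $T(X,Y)=-2\sum_s\epsilon_s\omega_s(X,Y)\xi_s$ of the torsion on horizontal vectors; the plan is to prove \eqref{zamiana} first and then feed it into the derivation of \eqref{comp1}.

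For \eqref{zamiana} I apply the general pair-swap identity \eqref{zam} with all four arguments in $H$. The essential simplification is that on horizontal input the $(\nabla T)$--contribution to $b$ vanishes: since $\nabla$ preserves the splitting $H\oplus V$ (Theorem~\ref{biqcon}), $T(Y,Z)$ is vertical, and $(\nabla_X T)(Y,Z)=\nabla_X T(Y,Z)-T(\nabla_X Y,Z)-T(Y,\nabla_X Z)$ remains vertical, hence pairs to zero against the horizontal $V$ in the extended metric. Consequently $b(X,Y,Z,V)$ reduces to the cyclic sum $\sum_{(X,Y,Z)}T(T(X,Y),Z,V)$. Substituting $T(X,Y)=-2\sum_s\epsilon_s\omega_s(X,Y)\xi_s$ from \eqref{torha} together with
\[
T(\xi_s,C,D)=-\tfrac14\bigl[\tau(I_sC,D)+\tau(C,I_sD)\bigr]+\mu(I_sC,D),
\]
which is a direct consequence of \eqref{tau-1} and \eqref{mus}, converts each of the four copies of $b$ on the right of \eqref{zam} into a sum of terms of the shape $\omega_s(\cdot,\cdot)\tau(I_s\cdot,\cdot)$, $\omega_s(\cdot,\cdot)\tau(\cdot,I_s\cdot)$ and $\omega_s(\cdot,\cdot)\mu(I_s\cdot,\cdot)$. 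Collecting the twelve resulting terms and simplifying via the antisymmetry of $\omega_s$ and the symmetries of $\tau$ and $\mu$ from Proposition~\ref{torb} produces \eqref{zamiana}.

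For \eqref{comp1} I first sum \eqref{rjr} cyclically over $(i,j,k)$; since every product $\epsilon_s\rho_s\omega_s$ appears twice, this yields
\[
3R(A,B,X,Y)+\sum_{s=1}^3\epsilon_s R(A,B,I_sX,I_sY) = -4\sum_{s=1}^3\epsilon_s\rho_s(A,B)\omega_s(X,Y),
\]
which controls the appropriate projection in the \emph{last} pair of arguments. To transfer this statement to the \emph{first} pair, I apply \eqref{zamiana} separately to $R(X,Y,Z,V)$ and to each $R(I_sX,I_sY,Z,V)$, rewriting them as $R(Z,V,X,Y)$ and $R(Z,V,I_sX,I_sY)$ plus the explicit correction prescribed by \eqref{zamiana}. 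The identity above, applied with $(A,B)=(Z,V)$, then turns the curvature portion of the left-hand side of \eqref{comp1} into $-4\sum_s\epsilon_s\rho_s(Z,V)\omega_s(X,Y)$. Substituting the explicit expression for $\rho_s$ in terms of $\tau$, $\mu$ and $Scal$ from \eqref{ricciformf} of Theorem~\ref{sixtyseven}, together with $\omega_s(X,Y)=g(I_sX,Y)$, produces the claimed right-hand side.

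The main obstacle will be the algebraic bookkeeping needed to match the $\mu$--contributions: a naive substitution of $\rho_s$ produces terms of the form $\omega_s(X,Y)\mu(I_sZ,V)$, whereas \eqref{comp1} is written using $\omega_s(Z,V)\mu(I_sX,Y)$. Reconciling the two forces a careful use of the symmetries $\mu(X,Y)=\mu(Y,X)$ and $\mu(I_sX,I_sY)=-\epsilon_s\mu(X,Y)$ of Proposition~\ref{torb}, the antisymmetry of $\omega_s$, and the additional $\mu$-- and $\tau$--terms injected by the pair-swap in \eqref{zamiana}; only after all these contributions combine do the residual differences cancel. A similar but lighter juggling of the paraquaternionic identities \eqref{paraq} is required for the $\tau$--contributions, where expressions such as $\tau(I_jZ,I_kV)$ must be rewritten via $I_iI_j=-\epsilon_kI_k$ before the desired cross structure emerges.
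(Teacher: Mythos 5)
Your proposal is correct and follows essentially the same route as the paper: identity \eqref{zamiana} is obtained from the pair-swap consequence \eqref{zam} of the first Bianchi identity, with $b$ reduced to the $T(T(\cdot,\cdot),\cdot,\cdot)$ terms via \eqref{torha}, \eqref{tau-1} and \eqref{mus}, and identity \eqref{comp1} is obtained by cyclically summing \eqref{rjr} with $(A,B)=(Z,V)$ (the paper's \eqref{rjr1}), transferring to the first pair of arguments via \eqref{zamiana} (the $Zam$ terms in the paper's \eqref{comp2}), and substituting \eqref{ricciformf}. The bookkeeping issues you flag for the $\tau$- and $\mu$-terms are exactly the ``series of straightforward calculations'' the paper invokes.
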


\begin{proof}
The first Bianchi identity \eqref{bian1}, its consequence \eqref{zam} together with
the help of Proposition~\ref{torb} and \eqref{torha} imply
the identity \eqref{zamiana} in a straightforward way.


Set $A=Z, B=V$ in \eqref{rjr} 
and take a cyclic sum to get 
\begin{multline}\label{rjr1}
\sum_{s=1}^3\epsilon_sR(Z,V,I_sX,I_sY)+3R(Z,V,X,Y)=-4\sum_{s=1}^3\epsilon_s\rho_j(Z,V)\f_s(X,Y)\\=
\sum_{s=1}^3\epsilon_s\Big[ \frac{Scal}{2n(n+2)}\f_s(Z,V)+2\tau(I_sZ,V)-2\tau(Z,I_sV)-8\mu(Z,I_sV)\Big]\f_s(X,Y),
\end{multline}
where we used \eqref{ricciformf} to obtain the last equality. Furthermore, 
we  apply \eqref{rjr1} to get
\begin{multline}\label{comp2}
3R(X,Y,Z,V)+\sum_{s=1}^3\epsilon_sR(I_sX,I_sY,Z,V)=3R(Z,V,X,Y)+3Zam(X,Y,Z,V)\\
+\sum_{s=1}^3\epsilon_s\Big[R(Z,V,I_sX,I_sY)+Zam(I_sX,I_sY,Z,V)\Big]=3R(Z,V,X,Y)+3Zam(X,Y,Z,V)-3R(Z,V,X,Y)\\
+\sum_{s=1}^3\epsilon_s\Big[ \frac{Scal}{2n(n+2)}\f_s(Z,V)+2\tau(I_sZ,V)-2\tau(Z,I_sV)-8\mu(Z,I_sV)\Big]\f_s(X,Y)+
\sum_{s=1}^3\epsilon_sZam(I_sX,I_sY,Z,V)\\=3Zam(X,Y,Z,V)+\sum_{s=1}^3\epsilon_sZam(I_sX,I_sY,Z,V)\\
+\sum_{s=1}^3\epsilon_s\Big[ \frac{Scal}{2n(n+2)}\f_s(Z,V)+2\tau(I_sZ,V)-2\tau(Z,I_sV)-8\mu(Z,I_sV)\Big]\f_s(X,Y).
\end{multline}
Next, after substituting \eqref{zamiana} into \eqref{comp2} we obtain \eqref{comp1} by series of straightforward calculations.
\end{proof}
Comparing \eqref{qcwdef1} with \eqref{comp1} we  obtain the next
\begin{prop}\label{main0} On a pqc manifold
the [-1]-part with respect to the first two arguments of the tensor $PWR$ vanishes identically,
$$PWR_{[-1]}(X,Y,Z,V)=\frac14\Big[3PWR(X,Y,Z,V)+\sum_{s=1}^3\epsilon_sPWR(I_sX,I_sY,Z,V)\Big]=0.$$
The [3]-component with respect to the first two arguments of the tensor
$PWR$ is determined completely by the torsion and the pqc scalar curvature as follows
\begin{multline}\label{qccm}
PWR_{[3]}(X,Y,Z,V)=\frac14\Big[PWR(X,Y,Z,V)-\sum_{s=1}^3\epsilon_sPWR(I_sX,I_sY,Z,V)\Big]\\
=\frac14\Big[R(X,Y,Z,V)-\sum_{s=1}^3\epsilon_sR(I_sX,I_sY,Z,V)\Big]+\frac12\sum_{s=1}^3\epsilon_s\omega_s(Z,V)\Big[\tau(X,I_sY)-\tau(I_sX,Y)\Bigr]\\
+\frac{Scal}{32n(n+2)}\Big [ (g\owedge g)(X,Y,Z,V) -\sum_{s=1}^3\epsilon_s
(\omega_s \owedge \omega_s)(X,Y,Z,V) \Big ]\\
 +(g\owedge \mu) (X,Y,Z,V) - \sum_{s=1}^3\epsilon_s(\omega_s\owedge I_s\mu)(X,Y,Z,V).
\end{multline}
\end{prop}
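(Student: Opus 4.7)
The plan is to apply the $[3]$ and $[-1]$ projection operators (on the first two arguments) to the definition \eqref{qcwdef1} of $PWR$, and invoke identity \eqref{comp1} from Theorem~\ref{bianrrr} to handle the curvature term. The input is entirely algebraic: once one knows how each summand of \eqref{qcwdef1} transforms under the substitution $(X,Y)\mapsto(I_tX,I_tY)$, the result follows by term-by-term collection.

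The first step is to record the algebraic types of the constituent tensors under the $Sp(n,\mathbb R)Sp(1,\mathbb R)$-splitting into the components $[3]$ and $[-1]$. The horizontal metric $g$ is of type $[3]$. By Proposition~\ref{torb}, the symmetric trace-free tensor $\tau$ lies in the $[-1]$ component and $\mu$ in the $[3]$ component, so $L_0=\frac12\tau+\mu$ splits canonically. The fundamental 2-form $\omega_s$ corresponds to the endomorphism $-\epsilon_s I_s$, which commutes with $I_s$ alone and anticommutes with the other two paraquaternionic structures; hence each $\omega_s$ sits in the $[-1]$ component. These facts determine the behaviour of the Kulkarni--Nomizu products $g\owedge L_0$, $\omega_s\owedge I_sL_0$, and of the mixed products of the form $\omega_s(X,Y)\cdot(\text{torsion})$ under the $I_t$-action on the first pair of arguments.

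Applying the $[-1]$-projection $\tfrac14\bigl[3T(X,Y,Z,V)+\sum_s\epsilon_s T(I_sX,I_sY,Z,V)\bigr]$ to \eqref{qcwdef1}, the contribution from the curvature is exactly $\tfrac14$ times the left-hand side of \eqref{comp1}, which by that theorem equals a specific expression in $\tau$, $\mu$, $g$, $\omega_s$ and $Scal$. Each remaining summand of \eqref{qcwdef1} is an explicit polynomial in the same data; projecting term-by-term and using the tabulation of algebraic types above, one verifies that all contributions cancel pairwise, giving $PWR_{[-1]}=0$. A parallel calculation with the $[3]$-projection $\tfrac14\bigl[T(X,Y,Z,V)-\sum_s\epsilon_s T(I_sX,I_sY,Z,V)\bigr]$, combined with the rearrangement of \eqref{comp1} that isolates $\sum_s\epsilon_s R(I_sX,I_sY,Z,V)$ in terms of $R$ and torsion quantities, reproduces precisely the right-hand side of \eqref{qccm}.

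The main obstacle is the scale of the bookkeeping rather than any conceptual step: each Kulkarni--Nomizu product expands to four summands, the cyclic sums over $(i,j,k)$ must be combined with the $\epsilon_s$ signs carefully, and the mixed products $\omega_s\cdot(\text{torsion})$ require repeated use of the paraquaternion identities \eqref{paraq} and \eqref{param}. The coefficients in \eqref{qcwdef1} and \eqref{lll} are calibrated so that the $[-1]$-contributions annihilate, and no ingredient is needed beyond Theorem~\ref{bianrrr} and Proposition~\ref{torb}; once all pieces are tabulated, the proposition follows from the match between \eqref{qcwdef1} and \eqref{comp1}.
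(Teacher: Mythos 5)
Your proposal is correct and follows essentially the same route as the paper: the paper's entire proof is the remark ``Comparing \eqref{qcwdef1} with \eqref{comp1} we obtain the next [Proposition]'', i.e.\ precisely the term-by-term projection onto the $[3]$ and $[-1]$ components of the first two arguments of \eqref{qcwdef1}, with Theorem~\ref{bianrrr} supplying the $[-1]$-part of the curvature; your tabulation of the algebraic types of $g$, $\tau$, $\mu$, $\omega_s$ is exactly the bookkeeping the paper leaves implicit. (The only cosmetic slip is that $\omega_s$ corresponds to the endomorphism $I_s$ rather than $-\epsilon_sI_s$, which does not affect its $[-1]$ type or the argument.)
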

\begin{dfn} We denote the [3]-part of the tensor $PWR$ described in \eqref{qccm} by $W^{pqc}, W^{pqc}:=PWR_{[3]}$
and call it the paraquaternionic contact conformal
curvature.
\end{dfn}
\subsection{Proof of Theorem~\ref{main1}}
The significance  of the tensor  $PWR$ is partially justified by the following
\begin{thrm}
\label{qcinv} The tensor $PWR$ is
invariant under pqc conformal  transformations, i.e. if
\begin{equation*}
\bar\eta=(2h)^{-1}\Psi\eta\quad {\text then} \qquad
2hPWR_{\bar\eta}=PWR_{\eta}, 
\end{equation*}
for any smooth positive function $h$ and any $SO(1,2)$-matrix $\Psi$.
\end{thrm}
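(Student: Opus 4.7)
The plan is to perform a direct computation showing that all conformal correction terms in the transformed curvature are absorbed by the correction terms appearing in the transformed tensor $L$. Because the canonical connection is unchanged by the $SO(1,2)$-action, I would first reduce to the case $\bar\eta=(2h)^{-1}\eta$ (so $\bar g=(2h)^{-1}g$, $\bar I_s=I_s$, $\bar\omega_s=(2h)^{-1}\omega_s$). Throughout, the main technical tool is the $(1,2)$-tensor $S$ given in \eqref{Ivan4}, which encodes the difference $\bar\nabla_AB=\nabla_AB+S_AB$.

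The first computational block is the transformation of the horizontal part of the curvature. Using the identity
\[
\bar R(X,Y)Z \;=\; R(X,Y)Z + (\nabla_XS)_YZ - (\nabla_YS)_XZ + [S_X,S_Y]Z - S_{T(X,Y)}Z,
\]
I would expand everything using \eqref{Ivan4} and \eqref{torha}. The Kulkarni--Nomizu type combinations $(g\owedge\nabla^2h)$, $(\omega_s\owedge I_s\nabla^2h)$ and their counterparts with $dh\otimes dh$ appear naturally; the trick is to group them according to how the symmetric Hessian decomposes as in \eqref{symdh} into its symmetric part and the vertical pieces $dh(\xi_s)\omega_s$. The second block is the transformation of $L$: formula \eqref{defor} gives $\bar\tau$ and $\bar\mu$ explicitly in terms of $[\nabla^2h]_{[sym][-1]}$ and $[\nabla^2h-2h^{-1}dh\otimes dh]_{[3][0]}$, and I would derive the transformation of the pqc scalar curvature by tracing the curvature transformation (or, more efficiently, by computing the trace of the two Ricci identities in \eqref{ricis} applied to $\bar L$). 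This yields a sub-hyperbolic Yamabe-type identity expressing $\overline{Scal}$ in terms of $Scal$, $\triangle_hh$ and $|\nabla h|^2$, which exactly matches the trace component needed to combine $\bar\tau$, $\bar\mu$ and $\overline{Scal}$ into $\bar L$.

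Once $\bar R$ and $\bar L$ are written in terms of the unbarred objects and the Hessian of $h$, I would substitute into the defining formula \eqref{qcwdef} (written for $\bar\eta$) and collect terms by tensor type. Schematically, terms quadratic in $dh$ coming from $[S,S]-S_T$ cancel against the $h^{-2}dh\otimes dh$ pieces in $\bar L$; the Hessian-dependent terms coming from $\nabla S$ cancel against the $h^{-1}\nabla^2h$ pieces in $\bar L$; and the remaining vertical terms $dh(\xi_s)\omega_s$ organize themselves into the cyclic $\omega_s$-combinations appearing in \eqref{qcwdef}. After the dust settles, what is left is $2h\,PWR_{\bar\eta}=PWR_\eta$.

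The main obstacle I expect is bookkeeping: the expression \eqref{qcwdef} has many terms with prescribed $\epsilon_s$-signs and cyclic structure, and tracking how each Hessian-term is projected into its $[3]$- and $[-1]$-components under the paraquaternionic action requires systematic use of the commutation identities $I_iI_j=-\epsilon_kI_k$ and the symmetry relations in Proposition~\ref{torb}. A useful simplification is to exploit Proposition~\ref{main0}: since $PWR_{[-1]}\equiv 0$ for any pqc structure, I only need to verify the identity on the $[3]$-part $W^{pqc}$, so any contribution of $[-1]$-type in the conformal correction can be discarded without verifying that it vanishes separately. This reduces the computation considerably and isolates the $[3][0]$ and pure-trace parts, where the explicit Yamabe-type transformation of $Scal$ makes the final cancellation transparent.
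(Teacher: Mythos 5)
Your plan follows essentially the same route as the paper's proof: reduce to $\Psi=\mathrm{id}$ by $SO(1,2)$-invariance of the canonical connection, compute $2h\bar R-R$ through the tensor $S$ of \eqref{Ivan4} and \eqref{qcw3}, and show that the resulting correction tensor (the paper's $M$ in \eqref{qcw5}) coincides, up to the vertical terms $dh(\xi_s)\omega_s$, with $\bar L-L$ via the transformation of $\tau$, $\mu$ and the Yamabe-type identity for $\overline{Scal}$, so that the correction is exactly absorbed by the $L$-terms in \eqref{qcwdef}. The only cosmetic difference is your proposed shortcut of checking only the $[3]$-part via Proposition~\ref{main0}; this is legitimate, but the paper verifies the identity for the full tensor $PWR$ directly and only invokes Proposition~\ref{main0} afterwards to deduce Theorem~\ref{main1}.
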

\begin{proof}
After a series of  standard computations based on
\eqref{qcw2}, \eqref{Ivan4}, \eqref{qcw3} and  a careful study of the structure of the obtained equation we put it in the following form
\begin{multline}  \label{qcw4}
2hg(\bar R(X,Y)Z,V)-g(R(X,Y)Z,V) 
=-g\owedge M(X,Y,Z,V)+\sum_{s=1}^3\epsilon_s \omega_s\owedge (I_s M)(X,Y,Z,V) \\
-\frac12\sum_{(i,j,k)}\epsilon_i \omega_i(X,Y)\Bigl[M(Z,I_iV)-M(I_iZ,V)-\epsilon_iM(I_jZ,I_kV)+\epsilon_iM(I_kZ,I_jV) %
\Bigr] \\
-g(Z,V)\Bigl[M(X,Y)-M(Y,X)\Bigr]-\sum_{s=1}^3\epsilon_s\omega_s(Z,V)\Bigl[M(X,I_sY)-M(Y,I_sX)\Bigr] \\
+\frac{1}{2n}(tr M)\, \sum_{s=1}^3\epsilon_s\omega_s(X,Y)\omega_s(Z,V) +\frac{1}{2n}\sum_{(i,j,k)}  M_i\Bigl[\omega_j(X,Y)\omega_k(Z,V) -\omega_k(X,Y)\omega_j(Z,V)%
\Bigr],
\end{multline}
where the (0,2) tensor $M$ is given by
\begin{multline}  \label{qcw5}
M(X,Y)=\frac1{2h}\Bigl(\nabla^2h(X,Y)-\frac1{2h}\Bigl[dh(X)dh(Y)-
\sum_{s=1}^3\epsilon_sdh(I_sX)dh(I_sY)+\frac12g(X,Y)|\C h|^2\Bigr]\Bigr),
\end{multline}
and  $tr M=M(e_a,e_a),M_s=M(e_a,I_se_a)$
are its traces. Using \eqref{qcw5} and \eqref{symdh}, we obtain
\begin{equation}  \label{qcw6}
tr M=(2h)^{-1}\Bigl(\triangle_h h-(n+2)h^{-1}|\C h|^2\Bigr), \qquad
M_s=-2n\,h^{-1}dh(\xi_s).
\end{equation}
After taking the traces in \eqref{qcw4},  using \eqref{qcw5} and the
fact that the [3]-component $(\nabla^2h)_{[3]}$ of the horisontal hessian $\nabla^2h$  is symmetric, we obtain
\begin{gather}\label{qcwric}
\overline{Ric}-Ric=4(n+1)M_{[sym]}+6M_{[3]} +\frac{2n+3}{2n}tr M\,g, \quad 
\frac{\overline{Scal}}{2h}-Scal=8(n+2)tr M.
\end{gather}
The $Sp(n,\mathbb R)Sp(1,\mathbb R)$-invariant [-1] and [3] parts of \eqref{qcwric}
are
\begin{gather}\label{qcwp1}
(\overline{Ric}-Ric)_{[-1]}=4(n+1)M_{[sym][-1]}, \quad 
(\overline{Ric}-Ric)_{[3]}=2(2n+5)M_{[3]} + \frac{2n+3}{2n}(tr\,M)\,g.
\end{gather}
The identities in Theorem~\ref{sixtyseven} together with equations \eqref{qcwric}, 
 \eqref{qcwp1} and \eqref{lll} yield
\begin{multline}  \label{mm}
M_{[sym]}=\Bigl(\frac{1}{4(n+1)}\overline{Ric}_{[-1]}+\frac{1}{2(2n+5)}%
\overline{Ric}_{[3]}- \frac{2n+3}{32n(n+2)(2n+5)}\overline{Scal}\,\overline g%
\Bigr) \\
-\Bigl(\frac{1}{4(n+1)}Ric_{[-1]}+\frac{1}{2(2n+5)}Ric_{[3]}- \frac{2n+3}{%
32n(n+2)(2n+5)}Scal\,g\Bigr) \\
=\Bigl[\frac12\overline{\tau}+\overline{\mu}+\frac{\overline{Scal}}{32n(n+2)}\,%
\overline{g}\Bigr]- \Bigl[\frac12\tau+\mu+\frac{Scal}{32n(n+2)}\,g\Bigr]=\bar L-L.
\end{multline}
Next, from \eqref{qcw5} and \eqref{symdh} we obtain
\begin{equation}  \label{mm1}
M(X,Y)=M_{[sym]}(X,Y)+\frac{1}{2h}\sum_{s=1}^3\epsilon_sdh(\xi_s)\omega_s(X,Y).
\end{equation}
Substitute \eqref{mm} into \eqref{mm1}, insert the obtained equality into 
\eqref{qcw4} and use \eqref{qcw6} to complete  the proof of
Theorem~\ref{qcinv}.
\end{proof}
At this point, a combination of Theorem~\ref{qcinv} and
Proposition~\ref{main0} ends the proof of Theorem~\ref{main1}.

The second equality in \eqref{qcwric} combined with the first equation in \eqref{qcw6} yields  
\begin{cor}
On a pqc manifold the pqc scalar curvature transforms under the pqc conformal transformation $\bar\eta=(2h)^{-1}\Psi\eta$ according to the equation
\be\label{pyam}
\overline{Scal}= 2h Scal-8(n+2)^2h^{-1}|\C h|^2+8(n+2)\triangle_hh.
\ee
\end{cor}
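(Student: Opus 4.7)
The statement follows by directly combining two equations already established in the proof of Theorem~\ref{qcinv}. The plan is simply to substitute one into the other and simplify.

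First, I would isolate $\overline{Scal}$ from the second equality in \eqref{qcwric}, which reads
\[
\frac{\overline{Scal}}{2h}-Scal=8(n+2)\,tr\,M.
\]
Multiplying through by $2h$ gives
\[
\overline{Scal}=2h\,Scal+16h(n+2)\,tr\,M.
\]

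Next, I would substitute the explicit formula for $tr\,M$ from the first identity in \eqref{qcw6}, namely
\[
tr\,M=(2h)^{-1}\Bigl(\triangle_h h-(n+2)h^{-1}|\C h|^2\Bigr).
\]
The factor $16h(n+2)\cdot(2h)^{-1}$ simplifies to $8(n+2)$, and the resulting expression is
\[
\overline{Scal}=2h\,Scal+8(n+2)\Bigl(\triangle_h h-(n+2)h^{-1}|\C h|^2\Bigr)=2h\,Scal-8(n+2)^2 h^{-1}|\C h|^2+8(n+2)\triangle_h h,
\]
which is exactly \eqref{pyam}.

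There is no real obstacle here: both ingredients, the trace identity \eqref{qcwric} and the explicit expression for $tr\,M$ in \eqref{qcw6}, were already extracted during the proof of Theorem~\ref{qcinv}. The corollary is essentially a bookkeeping statement recording the conformal transformation of the pqc scalar curvature in a form convenient for later use (for instance, in connection with the pqc Yamabe-type equation).
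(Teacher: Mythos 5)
Your proposal is correct and is exactly the paper's argument: the text preceding the corollary states that it follows from the second equality in \eqref{qcwric} combined with the first equation in \eqref{qcw6}, which is precisely the substitution and simplification you carry out. The arithmetic ($16h(n+2)\cdot(2h)^{-1}=8(n+2)$) checks out and reproduces \eqref{pyam}.
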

The equation \eqref{pyam} constitutes  the sub-hyperbolic Yamabe equation.

\section{Converse problem. Proof of Theorem~\ref{main2}}

Suppose $W^{pqc}=0$, hence $PWR=0$ by Proposition~\ref{main0}. In
order to prove Theorem~\ref{main2} we search for a conformal factor,
such that after a pqc  conformal transformation using this factor the new
pqc structure has  pqc canonical connection with flat horizontal curvature. 
 After we achieve this task, we invoke Proposition~\ref{hflat} and conclude that the given
structure is locally pqc conformal to the flat pqc structure on the paraquaternionic Heisenberg group $G(pH)$.

With these considerations in mind, it is then sufficient to find (locally)  a solution  $h$ of
equation \eqref{mm1} with $M_{[sym]}=-L$. In fact, a  substitution
of \eqref{mm1} in \eqref{qcw4} and an application of the condition
$W^{pqc}=0=PWR$ allows us to see that the pqc structure
$\bar\eta=\frac1{2h}\eta$ has flat pqc canonical connection.

Let us consider the following overdetermined system of partial
differential equations with respect to an unknown function $u$.
\begin{multline}  \label{sist1}
\nabla^2u(X,Y)=-du(X)du(Y)-\sum_{s=1}^3\epsilon_s \Bigl [
du(I_sX)du(I_sY)-du(\xi_s)\omega_s(X,Y)\Bigr ]\\
+\frac12g(X,Y)|\nabla u|^2 -L(X,Y),
\end{multline}
\begin{multline}  \label{add1}
\nabla^2u(X,\xi_i)=\mathbb{B}(X,\xi_i)-L(X,I_idu)+\frac12du(I_iX)|\nabla
u|^2 \\-du(X)du(\xi_i)+\epsilon_idu(I_jX)du(\xi_k)-\epsilon_idu(I_kX)du(\xi_j),
\end{multline}
\begin{gather}  \label{add2}
\nabla^2u(\xi_i,\xi_i)=-\mathbb{B}(\xi_i,\xi_i)+\mathbb{B}%
(I_idu,\xi_i)-\epsilon_i\frac14|\nabla u|^4-(du(\xi_i))^2-\epsilon_k(du(\xi_j))^2-\epsilon_j(du(\xi_k))^2, \\
\nabla^2u(\xi_j,\xi_i)=-\mathbb{B}(\xi_j,\xi_i)+\mathbb{B}%
(I_idu,\xi_j)-2du(\xi_i)du(\xi_j) +\epsilon_k\frac{Scal}{16n(n+2)}du(\xi_k),
\label{add3} \\
\nabla^2u(\xi_k,\xi_i)=-\mathbb{B}(\xi_k,\xi_i)+\mathbb{B}(I_idu,\xi_k)
-2du(\xi_i)du(\xi_k)-\epsilon_j\frac{Scal}{16n(n+2)}du(\xi_j).  \label{add4}
\end{gather}
Here, the tensor $L$ is given by \eqref{lll}, while the tensors
$\mathbb{B}(X,\xi_i)$ and $\mathbb{B}(\xi_i,\xi_j)$ do not depend on
the unknown function $u$ and will be determined later in \eqref{bes}
and \eqref{bst}, respectively. If we make the substitution
\begin{equation*}
2u= \ln h,\qquad 2hdu=dh, \qquad \nabla^2h = 2h\nabla^2u +
4hdu\otimes du,
\end{equation*}
in \eqref{qcw5} we recognize that \eqref{mm1} transforms into
\eqref{sist1}. Therefore, it is sufficient to show that the system
\eqref{sist1}-\eqref{add4} admits (locally) a smooth solution.

The integrability condition of the 
over-determined system \eqref{sist1}-\eqref{add4} is the Ricci identity
\begin{equation}  \label{integr}
\nabla^3u(A,B,C)-\nabla^3u(B,A,C)=-R(A,B,C,du)-\nabla^2u((T(A,B),C), \quad
A,B,C \in \Gamma(TM).
\end{equation}

The proof of Theorem~\ref{main2} will be achieved by considering all
possible cases of \eqref{integr}. It will be presented as a sequel
of subsections, which occupy the rest of this section. The goal is to show that the vanishing of the pqc conformal
tensor $W^{pqc}$ implies the validity of \eqref{integr}, which
guaranties the existence of a local smooth solution to the system
\eqref{sist1}-\eqref{add4}.

We start with 
the next Lemma, which is an application of a standard result in differential geometry. 
\begin{lemma}\label{norma}
 In a neighborhood of any point $p\in M^{4n+3}$
and a  $p\mathbb Q$-orthonormal basis
$$\{X_1(p),X_2(p)=I_1X_1(p)\dots,X_{4n}(p)=I_3X_{4n-3}(p),\xi_1(p),\xi_2(p),\xi_3(p)\}$$ of the tangential space at $p$,
there exists a $p\mathbb Q$ - orthonormal frame field
\begin{equation*}
\begin{split}
\{X_1,X_2=I_1X_1,\dots,X_{4n}=I_3X_{4n-3},\xi_1,\xi_2,\xi_3\},
X_{a_|p}=X_a(p),
 \xi_{s_|p}=\xi_s(p),
  \end{split}
  \end{equation*}
   such that the
connection 1-forms of the canonical connection are all zero at the point p, i.e., we have
\begin{equation}\label{norm}
(\nabla_{X_a}X_b)_|p=(\nabla_{\xi_i}X_b)_|p=(\nabla_{X_a}\xi_t)_|p=(\nabla_{\xi_t}\xi_s)_|p=0,
\end{equation}
for $a,b =1,\dots,4n,  s,t,r=1,2,3.$
In particular,
$$((\nabla_{X_a}I_s)X_b){_|p}=((\nabla_{X_a}I_s)\xi_t){_|p}=((\nabla_{\xi_t}I_s)X_b){_|p}
=((\nabla_{\xi_t}I_s)\xi_r){_|p}=0.$$
\end{lemma}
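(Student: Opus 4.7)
The plan is to construct the desired frame by a standard two-step procedure from the theory of $G$-structures: first extend the given basis at $p$ to an arbitrary $pQ$-orthonormal frame field in a neighborhood, and then apply a pointwise structure-group gauge transformation that kills the connection matrix at $p$ without disturbing the value of the frame at $p$.

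First, since the horizontal distribution $H$ carries a reduction of its frame bundle to the subgroup $Sp(n,\mathbb R)Sp(1,\mathbb R)\subset GL(4n)$, and the vertical bundle $V$ is (via the identification $\varphi$) acted upon by $SO(1,2)$, any $pQ$-orthonormal basis at $p$ extends smoothly to a $pQ$-orthonormal frame field $\{\tilde X_a,\tilde\xi_s\}$ on some open neighborhood $U\ni p$ with the prescribed values at $p$. With respect to this initial frame, the canonical connection $\nabla$ of Theorem~\ref{biqcon} has a connection matrix $\tilde\omega$ taking values in the Lie algebra $\mathfrak g=\mathfrak{sp}(n,\mathbb R)\oplus\mathfrak{sp}(1,\mathbb R)\oplus\mathfrak{so}(1,2)$, because items (i) and (iv) of Theorem~\ref{biqcon} guarantee that $\nabla$ preserves the splitting $H\oplus V$, the neutral metric $g$, the bundle $\mathbb{PQ}$, and acts on $V$ through the identification $\varphi$ with $\mathfrak{sp}(1,\mathbb R)$.

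Second, choose local coordinates $(y^1,\dots,y^{4n+3})$ centered at $p$ and define a gauge transformation $\Phi:U'\to G$, where $G=Sp(n,\mathbb R)Sp(1,\mathbb R)\times SO(1,2)$, by
\[
\Phi(y)=\exp\!\Bigl(-\sum_A y^A\,\tilde\omega_A(p)\Bigr),
\]
where $\tilde\omega_A(p)$ are the components of the connection 1-form at $p$ in these coordinates. Then $\Phi(p)=\mathrm{Id}$ and $d\Phi|_p=-\tilde\omega|_p$. Setting $X_a=\Phi\cdot\tilde X_a$ and $\xi_s=\Phi\cdot\tilde\xi_s$ gives a new $pQ$-orthonormal frame field with $X_a|_p=\tilde X_a|_p$ and $\xi_s|_p=\tilde\xi_s|_p$, whose connection matrix is given by the standard gauge-transformation rule
\[
\omega=\Phi^{-1}d\Phi+\Phi^{-1}\tilde\omega\,\Phi,
\]
so that $\omega|_p=-\tilde\omega|_p+\tilde\omega|_p=0$. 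This directly yields
\[
(\nabla_{X_a}X_b)_{|p}=(\nabla_{\xi_t}X_b)_{|p}=(\nabla_{X_a}\xi_s)_{|p}=(\nabla_{\xi_t}\xi_s)_{|p}=0.
\]

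Finally, the vanishing of all covariant derivatives of $I_s$ at $p$ follows from formula \eqref{conef}: since the $\mathfrak{sp}(1,\mathbb R)$-connection 1-forms $\alpha_s$ are determined by $\nabla\xi_s$ and vanish at $p$ in view of the preceding identities, \eqref{conef} immediately gives $(\nabla I_s)|_p=0$ when evaluated on any combination of $X_a$ and $\xi_t$. The only subtle point in the whole construction is ensuring that the gauge transformation $\Phi$ is well-defined as a map into $G$ and that its action preserves the $pQ$-orthonormal property of the frame; this is guaranteed exactly because $G$ is by design the structure group preserved by $\nabla$, so the two steps of the argument are fully compatible.
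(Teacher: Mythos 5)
Your proof is correct and follows essentially the same route as the paper: extend the basis to a local $p\mathbb{Q}$-orthonormal frame, gauge-transform by $\exp\bigl(-\sum_c \varpi(\partial/\partial x^c)_{|p}\,x^c\bigr)$ to kill the connection matrix at $p$, and deduce $(\nabla I_s)_{|p}=0$ from \eqref{conef}. The only cosmetic difference is your choice of convention in the gauge-transformation formula, which is immaterial at $p$ since $\Phi(p)=\mathrm{Id}$.
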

\begin{proof}
Since $\nabla$ preserves the splitting $H\oplus V$ we can apply the standard arguments
for the existence of a normal frame with respect to a metric connection (see e.g.
\cite{Wu}). We sketch the proof for completeness.

Let $\{\tilde X_1,\dots,\tilde X_{4n},\tilde\xi_1,\tilde\xi_2,\tilde\xi_3\}$ be a
$p\mathbb Q$-orthonormal basis around p such that $\tilde X_{a_|p}=X_a(p),\quad
\tilde\xi_{i_|p}=\xi_i(p)$. We want to find a modified  frame
$X_a=o^b_a\tilde X_b, \quad \xi_i=o^j_i\tilde\xi_j,$ which
satisfies the normality conditions of the lemma.

Let $\varpi$ be the $sp(n,\mathbb R)\oplus sp(1,\mathbb R)$-valued connection 1-forms with respect to the frame
$\{\tilde X_1,\dots,\tilde X_{4n},\tilde\xi_1,\tilde\xi_2,\tilde\xi_3\}$, 
$$\nabla\tilde X_b=\varpi^c_b\tilde X_c, \quad
\nabla\tilde\xi_s=\varpi^t_s\tilde\xi_t, \quad B\in\{\tilde X_1,\dots,\tilde
X_{4n},\tilde\xi_1,\tilde\xi_2,\tilde\xi_3\}.$$ Let $\{x^1,\dots,x^{4n+3}\}$ be a
coordinate system around p, such that $$\frac{\partial}{\partial x^a}(p)=X_a(p),\quad
\frac{\partial}{\partial x^{4n+t}}(p)=\xi_t(p), \quad a=1,\dots,4n, \quad t=1,2,3.$$ One
can easily check that the matrices with entries
$$o^b_a=exp\left(-\sum_{c=1}^{4n+3}\varpi^b_a(\frac{\partial}{\partial x^c})_{|p}x^c\right)\in Sp(n,\mathbb R),\
o^s_t=exp\left(-\sum_{c=1}^{4n+3}\varpi^s_t(\frac{\partial}{\partial
x^c})_{|p}x^c\right)\in Sp(1,\mathbb R)$$ are the desired matrices making the identities
\eqref{norm} true.

Next, the last identity in the lemma is a consequence of the fact that the choice of the
orthonormal basis of $V$ does not depend on the action of $SO(1,2)$ on $V$ combined with \eqref{conef}.
\end{proof}
\begin{dfn}\label{d:normal frame}
We refer to the orthonormal frame constructed in Lemma~\ref{norma} as a \emph{pqc-normal
frame}.
\end{dfn}
Since \eqref{integr} is
$Sp(n,\mathbb R)Sp(1,\mathbb R)$-invariant it is sufficient to check it in a pqc-normal frame. 

From now on
the frame  $\{e_1,e_2=I_1e_1,e_3=I_2e_1,e_4=I_3e_1 \dots,
e_{4n}=I_3e_{4n-3}, \xi_1, \xi_2, \xi_3 \}$
is a fixed pqc-normal frame at a fixed point $p\in M$.

\subsection{Case 1: $X,Y,Z \in H$. Integrability condition ~(\ref{inte})}\hfill

The equation \eqref{integr} on $H$  takes the form
\begin{multline}  \label{inteh}
\nabla^3u(Z,X,Y)-\nabla^3u(X,Z,Y)=-R(Z,X,Y,du) \\
+ 2\omega_1(Z,X)\nabla^2u(\xi_1,Y)+2\omega_2(Z,X)\nabla^2u(\xi_2,Y)-2\omega_3(Z,X)\nabla^2u(\xi_3,Y),
\end{multline}
where we have used \eqref{torha}. The identity $d^2u=0$ gives
\begin{equation}  \label{comutat}
\nabla^2u(X,\xi_s)-\nabla^2u(\xi_s,X)=du(T(\xi_s,X))=T(\xi_s,X,du).
\end{equation}
After we take a covariant derivative of \eqref{sist1} along $Z\in
H$, substitute the derivatives from \eqref{sist1} and \eqref{add1},
then anti-commute the covariant derivatives, substitute the result
in \eqref{inteh}, use \eqref{qcwdef} with $PWR=0$, \eqref{comutat}, \eqref{lll} and the properties of the torsion
described in  Proposition~\ref{torb}, we obtain by series of standard calculations, that the integrability condition in this
case is
\begin{multline}  \label{inte}
(\nabla_ZL)(X,Y)-(\nabla_XL)(Z,Y)\\
=-\sum_{s=1}^3\epsilon_s\Bigl[\omega_s(Z,Y)\mathbb{B}%
(X,\xi_s)- \omega_s(X,Y)\mathbb{B}(Z,\xi_s)+2\omega_s(Z,X)\mathbb{B}%
(Y,\xi_s) \Bigr].
\end{multline}
Now, we determine the tensors $\mathbb{B}(X,\xi_s)$. The traces in \eqref{inte}  give the next sequence of
equalities
\begin{equation}  \label{bes}
\begin{aligned}
&(\nabla_{e_a}L)(I_ie_a,I_iX)=(4n+1)\mathbb{B}(I_iX,\xi_i)+\epsilon_k\mathbb{B}%
(I_jX,\xi_j)+\epsilon_j\mathbb{B}(I_kX,\xi_k); \\
&\sum_{s=1}^3\epsilon_s\mathbb{B}(I_sX,\xi_s)=\frac13\Bigl[(
\nabla_{e_a}L)(e_a,X)-\nabla_Xtr\,L\Bigr]=\frac1{4n-1}\sum_{s=1}^3\epsilon_s(
\nabla_{e_a}L)(I_se_a,I_sX); \\
&\mathbb{B}(X,\xi_i)=\frac1{2(2n+1)}\Bigl[(\nabla_{e_a}L)(I_ie_a,X)+\frac13%
\Bigl((\nabla_{e_a}L)(e_a,I_iX)-\nabla_{I_iX}tr\,L\Bigl)\Bigl],
\end{aligned}
\end{equation}
where the second equality in \eqref{bes} is precisely equivalent to %
\eqref{div}.
\begin{lemma}
\label{prost} The condition \eqref{inte} is equivalent to
\begin{equation}\label{con111}
(\nabla_ZL)(X,Y)-(\nabla_XL)(Z,Y)=0\qquad {\text mod} \quad g, \omega_1,\omega_2,\omega_3.
\end{equation}
\end{lemma}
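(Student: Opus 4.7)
The forward direction of the lemma is essentially tautological: the right-hand side of \eqref{inte} is a linear combination of the forms $\omega_1,\omega_2,\omega_3$ (no $g$-term appears), so the difference $(\nabla_ZL)(X,Y)-(\nabla_XL)(Z,Y)$ automatically vanishes modulo $g,\omega_1,\omega_2,\omega_3$. Thus \eqref{inte} immediately implies \eqref{con111}.

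For the converse direction, the plan is to assume \eqref{con111} and pin down every coefficient by successive trace arguments. Set
\[
\Delta(X,Y,Z):=(\nabla_ZL)(X,Y)-(\nabla_XL)(Z,Y).
\]
Since $L$ is symmetric in its two arguments, $\Delta$ is antisymmetric in $(Z,X)$ and symmetric in $Y$ with either $Z$ or $X$ played off against the other. The most general expression representing $\Delta$ modulo the ideal generated by $g$ and $\omega_1,\omega_2,\omega_3$ compatible with these symmetries has the form
\[
\Delta(X,Y,Z)=\sum_{s=1}^{3}\Bigl[\omega_s(Z,Y)A_s(X)-\omega_s(X,Y)A_s(Z)+\omega_s(Z,X)C_s(Y)\Bigr]+g(Z,Y)B(X)-g(X,Y)B(Z),
\]
for 1-forms $A_s, C_s, B$ on $H$ which remain to be determined.

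The next step is to contract this identity along an adapted orthonormal basis $\{e_a\}$ of $H$, using both the bare trace and the $I_s$-twisted traces, and then to exploit the explicit structure $L=\tfrac12\tau+\mu+\tfrac{Scal}{32n(n+2)}g$ from \eqref{lll} together with the trace-free properties of $\tau$ and $\mu$ collected in Proposition~\ref{torb}. Each contraction yields a linear relation among $A_s, C_s, B$ and the divergences $(\nabla_{e_a}L)(e_a,X)$, $(\nabla_{e_a}L)(I_se_a,I_sX)$, and $(\nabla_{e_a}L)(I_se_a,X)$. The divergence identity \eqref{div} from Theorem~\ref{bianrrre} is precisely the compatibility relation that makes this overdetermined system consistent; together with the symmetry constraints it forces $B\equiv 0$, $C_s=2A_s$, and
\[
A_s(X)=-\epsilon_s\,\mathbb{B}(X,\xi_s),
\]
where $\mathbb{B}(X,\xi_s)$ is given by \eqref{bes}. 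Substituting these values back into the general expression for $\Delta$ reproduces \eqref{inte}, completing the proof.

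The main obstacle is the trace bookkeeping: three $I_s$-twisted contractions can be taken of a tensor built from $\tau$, $\mu$ and the scalar curvature, and each produces an a~priori different formula for $\mathbb{B}(X,\xi_s)$. The sharp input that reconciles them is the conservation law \eqref{div}; without it one could write down infinitely many tensors of the shape on the right of \eqref{con111} whose traces would not admit a common $\mathbb{B}$, so \eqref{con111} would be strictly weaker than \eqref{inte}. It is precisely the fact that $L$ is assembled from the Ricci data of the canonical pqc connection, and thereby inherits \eqref{div}, that makes the equivalence hold.
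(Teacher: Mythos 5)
Your proposal is correct and follows essentially the same route as the paper: one direction is immediate since the right-hand side of \eqref{inte} lies in the span of the $\omega_s$, and for the converse one writes the most general ansatz for $(\nabla_ZL)(X,Y)-(\nabla_XL)(Z,Y)$ modulo $g,\omega_1,\omega_2,\omega_3$ compatible with its symmetries and determines the coefficients by traces, with the divergence identity \eqref{div} (equivalently the second equality in \eqref{bes}) forcing the $g$-coefficient to vanish. The only cosmetic difference is that the paper first invokes the identically vanishing cyclic sum $\sum_{(Z,X,Y)}\bigl[(\nabla_ZL)(X,Y)-(\nabla_XL)(Z,Y)\bigr]=0$ to build the relation $C_s=2A_s$ into the ansatz from the start, leaving only the single unknown $C$ to be killed by the trace comparison, whereas you recover that relation from the trace bookkeeping.
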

\begin{proof}
Observe  the 
 cyclic sum $
\sum_{(Z,X,Y)}[(\nabla_ZL)(X,Y)-(\nabla_XL)(Z,Y)]=0.$ 
The condition \eqref{con111} implies
\begin{multline}  \label{alg1}
(\nabla_ZL)(X,Y)-(\nabla_XL)(Z,Y)= g(Z,Y)C(X)-g(X,Y)C(Z) \\
-\sum_{s=1}^3\epsilon_s\Bigl[\omega_s(Z,Y) \mathbb{B}(X,\xi_s)- \omega_s(X,Y)\mathbb{B}%
(Z,\xi_s)+2\omega_s(Z,X) B\mathbb{(}Y,\xi_s) \Bigr],
\end{multline}
for some tensors $C(X),B(X,\xi_s)$.   The traces
in \eqref{alg1} yield
\begin{equation*}
\begin{aligned}
&(\nabla_{e_a}L)(I_ie_a,I_iX)=(4n+1) \mathbb{B}(I_iX,\xi_i)+\epsilon_k
\mathbb{B}
(I_jX,\xi_j)+\epsilon_j \mathbb{B}(I_kX,\xi_k) +\epsilon_iC(X) \quad \text{implying} \\
& \sum_{s=1}^3\epsilon_s(\nabla_{e_a}L)(I_se_a,I_sX)=\sum_{s=1}^3 \epsilon_s(4n-1)
\mathbb{B} (I_sX,\xi_s)+3C(X).
\end{aligned}
\end{equation*}
The latter together with the second equality in
\eqref{bes} shows  $C(X)=0$.
\end{proof}
\begin{prop}
\label{integrmain} If $W^{pqc}=0$  then the
condition \eqref{inte} holds.
\end{prop}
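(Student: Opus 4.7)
The plan is to translate the algebraic hypothesis $W^{pqc}=0$ into the first-order differential identity \eqref{inte} by means of the second Bianchi identity for the canonical pqc connection, exploiting Lemma~\ref{prost} to work only modulo the ideal generated by $g$ and the fundamental 2-forms $\omega_1,\omega_2,\omega_3$.

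The first step is to combine the hypothesis $W^{pqc}=0$ with Proposition~\ref{main0}, which states that $PWR_{[-1]}=0$ always, to conclude that $PWR=0$ identically. By \eqref{qcwdef}, this renders the horizontal curvature $R(X,Y,Z,V)$ an explicit algebraic expression in the tensor $L$, the metric $g$ and the fundamental 2-forms $\omega_s$. Covariantly differentiating this algebraic expression, and recalling that $\nabla g=0$ while $\nabla I_s$ is controlled by the $sp(1,\mathbb R)$ connection 1-forms $\alpha_s$ from \eqref{conef}, one obtains $(\nabla_U R)(X,Y,Z,V)$ as an expression linear in $\nabla L$ modulo terms proportional to $g$ and $\omega_s$.

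Next, apply the second Bianchi identity for $\nabla$ in the form
$$\sum_{(U,X,Y)}\Bigl[(\nabla_U R)(X,Y,Z,V) - R(T(U,X),Y,Z,V)\Bigr]=0, \qquad U,X,Y\in H,$$
substituting the horizontal torsion formula \eqref{torha} and the explicit expression \eqref{vert1} for $R(\xi_s,\cdot,\cdot,\cdot)$ in terms of $\nabla \tau$, $\nabla\mu$ and the Ricci 2-forms $\rho_s$. Since $L=\tfrac12\tau+\mu+\tfrac{Scal}{32n(n+2)}g$ by \eqref{lll}, this Bianchi identity becomes a relation involving only the horizontal derivatives of $L$ together with tensor terms lying in the ideal generated by $g, \omega_s$. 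Taking an appropriate horizontal trace on the resulting identity and comparing the surviving contractions with the divergences of $L$ that define $\mathbb{B}(X,\xi_s)$ in \eqref{bes}, one recovers the congruence $(\nabla_Z L)(X,Y)-(\nabla_X L)(Z,Y)\equiv 0 \pmod{g,\omega_s}$, i.e., exactly \eqref{con111}. Lemma~\ref{prost} then promotes this congruence to the full identity \eqref{inte}.

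The main obstacle is combinatorial rather than conceptual: the formula \eqref{qcwdef} contains seven distinct groups of terms (the Kulkarni--Nomizu products $g\owedge L$ and $\omega_s\owedge I_sL$, a cyclic sum over $(i,j,k)$, and the $\omega_s\otimes\omega_s$ trace term), and each must be covariantly differentiated, anti-symmetrized in the appropriate pair of horizontal arguments, and then reduced modulo the ideal. The tools that keep this manageable are the algebraic identities \eqref{t01}, \eqref{u01}, \eqref{t1} expressing $\tau,\mu$ and $T(\xi_i,\cdot,\cdot)$ in terms of $L$, the trace formulas \eqref{ricis}, and crucially the observation built into Lemma~\ref{prost}: essentially every term in $\nabla R$ produced by differentiating Kulkarni--Nomizu products comes already multiplied by a $g$ or an $\omega_s$, and hence drops out in the congruence, leaving a clean extraction of the divergence expressions $\mathbb{B}(X,\xi_s)$.
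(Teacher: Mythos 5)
Your proposal follows essentially the same route as the paper: contract the second Bianchi identity over a horizontal frame, use $W^{pqc}=0$ together with \eqref{qcwdef} to turn $\nabla R$ into an expression in $\nabla L$ modulo $g,\omega_s$, substitute \eqref{t01}, \eqref{u01} into \eqref{vert1} for the vertical curvature terms, and finish with Lemma~\ref{prost}. The one place you underestimate the work is the final extraction: the contracted Bianchi identity does not directly yield the congruence \eqref{con111} but rather a linear combination of $(\nabla_YL)(X,Z)-(\nabla_XL)(Y,Z)$ with its $I_s$-twisted companions (the paper's \eqref{qcbi1}), and isolating the untwisted term requires the further $[3]$- and $[-1]$-projections in the pairs $X,Y$ and $X,Z$ together with the auxiliary relation \eqref{neweq} coming from the cyclic Bianchi identity for the Ricci $2$-forms $\rho_i$ --- not merely a trace comparison with \eqref{bes}.
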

\begin{proof} 
The  second Bianchi identity
\begin{equation}  \label{secb}
\sum_{(A,B,C)}\Big\{(\nabla_AR)(B,C,D,E)+R(T(A,B),C,D,E)\Big\}=0,
\end{equation}
 combined with  \eqref{torha} yields
\begin{equation}  \label{bi20}
\sum_{(X,Y,Z)} \Bigl[(\nabla_XR)(Y,Z,V,W)-2\sum_{s=1}^3\epsilon_s\omega_s(X,Y)R(%
\xi_s,Z,V,W)\Bigr]=0.
\end{equation}
The trace in \eqref{bi20} leads to
\begin{multline}\label{bi2ric}
(\C_{e_a}R)(X,Y,Z,e_a)=(\C_XRic)(Y,Z)-(\C_YRic)(X,Z)\\
+2\sum_{s=1}^3\epsilon_s\Big[ R(\xi_s,Y,Z,I_sX)-R(\xi_s,X,Z,I_sY)+\f_s(X,Y)Ric(\xi_s,Z) \Big].
\end{multline}
We use $W^{pqc}=0$,  \eqref{bi2ric} and apply \eqref{qcwdef} to
calculate
\begin{multline}\label{n=11}
(\nabla_{e_a}R)(X,Y,Z,e_a)=-(\nabla_YL)(X,Z)+(\nabla_XL)(Y,Z)\\
+\sum_{s=1}^3\epsilon_s\Big[(\nabla_{I_sX}L)(Y,I_sZ)-(\nabla_{I_sY}L)(X,I_sZ)-(\nabla_{I_sZ}L)(X,I_sY)+
(\nabla_{I_sZ}L)(I_sX,Y)\Big]
\quad {\text mod} \quad g,\omega_s.
\end{multline}
By substituting \eqref{t01}, \eqref{u01} into \eqref{vert1} we get

\begin{multline}\label{n=12}
2\sum_{s=1}^3\epsilon_s\Big[R(\xi_s,Y,Z,I_sX)-R(\xi_s,X,Z,I_sY)\Big]\\=
-\sum_{s=1}^3\epsilon_s\Big[(\nabla_{I_sY}L)(X,I_sZ)-(\nabla_{I_sX}L)(Y,I_sZ)+(\nabla_{I_sY}L)(I_sX,Z)-
(\nabla_{I_sX}L)(I_sY,Z)\Big]\\+
\frac32\sum_{s=1}^3\Big[(\nabla_{Y}L)(X,Z)-(\nabla_{X}L)(Y,Z)-\epsilon_s(\nabla_{Y}L)(I_sX,I_sZ)+\epsilon_s
(\nabla_{X}L)(I_sY,I_sZ)\Big] \quad {\text mod} \quad g,\omega_s.
\end{multline}
The second Bianchi identity gives
$\sum_{(X,Y,Z)}(\nabla_X\rho_i)(Y,Z)=0\quad {\text mod}\quad
g,\omega_s.$ Use \eqref{ricis}  to see
\begin{multline}  \label{neweq}
3\Bigl((\nabla_YL)(X,Z)-(\nabla_XL)(Y,Z)\Bigr)-\sum_{s=1}^3\epsilon_s\Bigl(%
(\nabla_YL)(I_sX,I_sZ)-(\nabla_XL)(I_sY,I_sZ)\Bigr) \\
- \sum_{s=1}^3\epsilon_s\Bigl[(\nabla_{I_sZ}L)(X,I_sY)-(\nabla_{I_sZ}L)(I_sX,Y)\Bigr]%
=0 \quad {\text mod} \quad g,\omega_s.
\end{multline}
A substitution of \eqref{n=11}, \eqref{n=12}, \eqref{neweq} and
\eqref{ricis} in \eqref{bi2ric} shows, after  standard calculations, that
\begin{multline}  \label{qcbi1}
(4n+3)\Bigl[(\nabla_YL)(X,Z)-(\nabla_XL)(Y,Z)\Bigr]-
\sum_{s=1}^3\epsilon_s\Bigl[(\nabla_{I_sY}L)(I_sX,Z)-(\nabla_{I_sX}L)(I_sY,Z)\Bigr]\\-
2\sum_{s=1}^3\epsilon_s\Bigl[(\nabla_YL)(I_sX,I_sZ)-(\nabla_{I_sX}L)(Y,I_sZ)+(\nabla_{I_sY}L)(X,I_sZ)-(\nabla_XL)(I_sY,I_sZ)
\Bigr]
=0\quad {\text mod} \quad g,\omega_s.
\end{multline}
Taking the [3]-component with respect to $X,Y$ in \eqref{qcbi1}, we obtain
\begin{equation}\label{n=13}
(\nabla_YL)(X,Z)-(\nabla_XL)(Y,Z)-\sum_{s=1}^3\epsilon_s\Big[
(\nabla_{I_sY}L)(I_sX,Z)-(\nabla_{I_sX}L)(I_sY,Z)\Big]
=0\quad {\text mod} \quad g,\omega_s.
\end{equation}
A substitution of  \eqref{n=13} in \eqref{qcbi1} gives
\begin{multline}\label{qcbin1}
2n\Bigl[(\nabla_YL)(X,Z)-(\nabla_XL)(Y,Z)\Bigr]-
\sum_{s=1}^3\epsilon_s\Bigl[(\nabla_{I_sY}L)(X,I_sZ)-(\nabla_{X}L)(I_sY,I_sZ)\Bigr]\\+
(\nabla_YL)(X,Z)-(\nabla_XL)(Y,Z)-\sum_{s=1}^3\epsilon_s\Bigl[(\nabla_YL)(I_sX,I_sZ)-(\nabla_{I_sX}L)(Y,I_sZ)
\Bigr]
=0\quad {\text mod} \quad g,\omega_s.
\end{multline}
Taking the [-1]-component with respect to $X,Z$ of \eqref{qcbin1},
calculated with the help of \eqref{n=13}, yields
\begin{multline}  \label{qcbin2}
(6n-1)\Bigl[(\nabla_YL)(X,Z)-(\nabla_XL)(Y,Z)\Bigr]-
4\sum_{s=1}^3\epsilon_s\Bigl[(\nabla_{I_sY}L)(X,I_sZ)-(\nabla_{X}L)(I_sY,I_sZ)\Bigr]\\
+(2n+1)\sum_{s=1}^3\epsilon_s\Bigl[(\nabla_YL)(I_sX,I_sZ)-(\nabla_{I_sX}L)(Y,I_sZ)
\Bigr]
=0\quad {\text mod} \quad g,\omega_s.
\end{multline}
The equations \eqref{qcbin1} and \eqref{qcbin2} lead to
\begin{multline*}
(\nabla_YL)(X,Z)-(\nabla_XL)(Y,Z)-
\sum_{s=1}^3\epsilon_s\Bigl[(\nabla_YL)(I_sX,I_sZ)-(\nabla_{I_sX}L)(Y,I_sZ)\Bigr]
=0\quad {\text mod} \quad g,\omega_s.
\end{multline*}
The latter  and \eqref{qcbin1} imply
\begin{equation}  \label{qcbi5}
(2n-1)\Big[(\nabla_YL)(X,Z)-(\nabla_XL)(Y,Z)\Big] =0 \quad {\text
mod} \quad g,\omega_s
\end{equation}
and Lemma~\ref{prost} completes the proof of \eqref{inte}.
\end{proof}

\subsection{Case 2: $Z,X \in H,\quad \xi_i\in V$. Integrability condition
~(\ref{inte1})}\hfill

In this second case \eqref{integr} turns into
\begin{multline}  \label{intehxi}
\nabla^3u(Z,X,\xi_{i})-\nabla^3u(X,Z,\xi_{i})=-R(Z,X,\xi_{i},du)-\nabla^2u(T(Z,X),\xi_{i})= \\
-2\epsilon_idu(\xi_{j})\rho_{k}(Z,X)+2\epsilon_idu(\xi_{k})\rho_{j}(Z,X) \\
+2\epsilon_i\omega_{i}(Z,X)\nabla^2u(\xi_{i},\xi_{i})+2\epsilon_j\omega_{j}(Z,X)\nabla^2u(\xi_{j},\xi_{i})+2\epsilon_k\omega_{k}(Z,X)\nabla^2u(\xi_{k},\xi_{i}),
\end{multline}
after using \eqref{torha} and \eqref{rhov}.  
Take a covariant
derivative of \eqref{add1} along $Z\in H$, substitute into the obtained equality \eqref{sist1} and \eqref{add1} and anti-commute the covariant derivatives to get
\begin{multline}  \label{intver1}
\nabla^3u(Z,X,\xi_{i})-\nabla^3u(X,Z,\xi_{i})=(\nabla_Z\mathbb{B}%
)(X,\xi_{i})-(\nabla_X\mathbb{B})(Z,\xi_{i}) \\
-(\nabla_ZL)(X,I_{i}du)+(\nabla_XL)(Z,I_{i}du)
-L(X,\nabla_ZI_{i}du)+L(Z,\nabla_XI_{i}du) \\
+ {\text other \quad terms \quad comming \quad from \quad the \quad use
\quad of \quad \eqref{sist1} \quad and \quad \eqref{add1}}.
\end{multline}
Next, we substitute \eqref{intver1} into \eqref{intehxi}, use \eqref{inte}, already proved in Proposition~\ref{integrmain}, together with
\eqref{add2}, \eqref{add3}, \eqref{add4} and the second equation in \eqref{ricis}, perform
some basic calculation and  arrive at
\begin{equation}  \label{inte1}
(\nabla_Z\mathbb{B})(X,\xi_t)-(\nabla_X\mathbb{B})(Z,%
\xi_t)-L(Z,I_tL(X))+L(X,I_tL(Z))= -2\sum_{s=1}^3\epsilon_s\mathbb{B}(\xi_s,\xi_t)%
\omega_s(Z,X),
\end{equation}
which is the integrability condition in this case. The
functions $\mathbb{B}(\xi_s,\xi_t)$ are uniquely determined by
\begin{equation}  \label{bst}
\mathbb B(\xi_s,\xi_t)=\frac1{4n}\Bigl[ (\nabla_{e_a}\mathbb
B)(I_se_a,\xi_t) +L(e_a,e_b)L(I_te_a,I_se_b)\Bigr].
\end{equation}
\begin{prop}
\label{integrmain1} If $W^{qc}=0$ then
the condition \eqref{inte1} holds.
\end{prop}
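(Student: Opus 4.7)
The plan is to mirror the proof of Proposition~\ref{integrmain}, but at one derivative--order higher. Since \eqref{bes} expresses $\mathbb{B}(X,\xi_t)$ as a horizontal divergence of $L$, every term appearing in \eqref{inte1} is algebraic in $L$ and its first two horizontal derivatives. I would therefore take a horizontal covariant derivative of the already--established identity \eqref{inte}, antisymmetrize appropriately, and commute the covariant derivatives via the Ricci identity. This produces curvature terms of the schematic form $R(\cdot,\cdot)\cdot L$.

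The hypothesis $W^{pqc}=PWR=0$, combined with \eqref{qcwdef}, expresses the full horizontal curvature $R$ algebraically in terms of $L$, $g$, and the $\omega_s$. Substituting this into the Ricci commutator yields quadratic expressions in $L$, and in particular the combination $L(Z,I_tL(X))-L(X,I_tL(Z))$ appearing in the left--hand side of \eqref{inte1}. In parallel, the mixed curvature $R(\xi_t,Z,X,\cdot)$ that already occurred on the right of \eqref{intehxi} is rewritten through \eqref{vert1}, using \eqref{t01} and \eqref{u01} to express $\tau$ and $\mu$ in terms of $L$, purely as a combination of horizontal derivatives of $L$. Matching this with the divergence formula \eqref{bes} identifies the resulting second--order expression with the term $(\nabla_Z\mathbb{B})(X,\xi_t)-(\nabla_X\mathbb{B})(Z,\xi_t)$.

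Collecting these substitutions should yield an identity of the form
\[
(\nabla_Z\mathbb{B})(X,\xi_t)-(\nabla_X\mathbb{B})(Z,\xi_t)-L(Z,I_tL(X))+L(X,I_tL(Z))=\sum_{s=1}^3\epsilon_s C_{st}\,\omega_s(Z,X),
\]
i.e.\ the desired equality modulo the three $\omega_s(Z,X)$ terms. The $g(Z,X)$--component is automatically absent because both sides are antisymmetric in $(Z,X)$, and the $[3]$/$[-1]$ decomposition of the remaining ambiguity collapses exactly onto the $\omega_s$--multiples, in the spirit of Lemma~\ref{prost}. The coefficients $C_{st}$ are then pinned down by tracing against $\omega_s$ in $(Z,X)$: the trace reduces, via \eqref{inte} together with the second Bianchi identity applied to the vertical Ricci forms from \eqref{rhov}, to precisely the right--hand side of the defining formula \eqref{bst} for $\mathbb{B}(\xi_s,\xi_t)$. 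This forces $C_{st}=-2\mathbb{B}(\xi_s,\xi_t)$ and closes the argument.

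The main obstacle is computational rather than conceptual: the calculation is long, and one must carefully track the cyclic sums over $(i,j,k)$, the neutral--signature signs $\epsilon_s$, and the $[3]$/$[-1]$ projections introduced by the paraquaternionic structure. Working in a pqc--normal frame (Definition~\ref{d:normal frame}) eliminates all $\nabla I_s$ and $\nabla\xi_t$ contributions at the base point, so the Ricci--identity manipulations reduce to purely algebraic substitutions among the tensors $L$, $\tau$, $\mu$, and the horizontal derivatives of $L$. The uniqueness step --- verifying that once the $\omega_s$--trace is fixed by \eqref{bst} no further ambiguity remains --- is the place where the specific choice of the normalization constants inside $L$ in \eqref{lll} and inside $\mathbb B$ in \eqref{bes}, \eqref{bst} is used decisively.
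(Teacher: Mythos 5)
Your strategy is essentially the paper's own proof: differentiate the already-established \eqref{inte}, trace and antisymmetrize, commute derivatives via the Ricci identities so that the resulting curvature contractions can be replaced using $W^{pqc}=0$ and \eqref{qcwdef}, convert the vertical-curvature contributions through \eqref{vert1}/\eqref{vert023} and the divergence formulas \eqref{bes}, and observe that the $\omega_s$-coefficients are forced by the defining trace \eqref{bst} of $\mathbb{B}(\xi_s,\xi_t)$. The paper packages the intermediate bookkeeping into Lemma~\ref{l:rho-ta} and the tensor $F$ of \eqref{int09}, but the route and the key inputs are the same as yours.
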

\begin{proof}
To prove the assertion it is sufficient to
 show that the left hand side of \eqref{inte1} vanishes $mod\quad \omega_s$.
Differentiating \eqref{inte} and taking the corresponding traces
yield
\begin{multline}\label{int01}
(\nabla^2_{e_a,I_{i}e_a}L)(X,Y)-(\nabla^2_{e_a,X}L)(I_{i}e_a,Y)=
-(\nabla_Y
\mathbb{B})(X,\xi_{i})-2(\nabla_X\mathbb{B})(Y,\xi_{i}) \\
-\epsilon_i\Big[(\nabla_{I_{k}Y}\mathbb{B})(X,\xi_{j})+2(\nabla_{I_{k}X}\mathbb{B})(Y,\xi_{j})-
(\nabla_{I_{j}Y}\mathbb{B})(X,\xi_{k})-2(\nabla_{I_{j}X}\mathbb{B})(Y,\xi_{k})\Big];
\end{multline}
\begin{multline}\label{int02}
(\nabla^2_{e_a,X}L)(I_{i}e_a,Y)-(\nabla^2_{e_a,Y}L)(I_{i}e_a,X)=(\nabla_X\mathbb{
B})(Y,\xi_{i})-(\nabla_Y\mathbb{B})(X,\xi_{i}) \\
-\epsilon_i\Big[(\nabla_{I_{k}Y}\mathbb{B})(X,\xi_{j})-(\nabla_{I_{k}X}\mathbb{B}
)(Y,\xi_{j})-(\nabla_{I_{j}Y}\mathbb{B})(X,\xi_{k})+
(\nabla_{I_{j}X}\mathbb{B} )(Y,\xi_{k})\Big]\quad {\text mod}\quad
\omega_s;
\end{multline}
 \begin{multline}\label{int03}
(\nabla^2_{X,e_a}L)(I_{i}e_a,Y)=(4n+1)(\nabla_X\mathbb{B})(Y,\xi_{i})+\epsilon_i(\nabla_X
\mathbb{B})(I_{k}Y,\xi_{j})-\epsilon_i
(\nabla_X\mathbb{B})(I_{j}Y,\xi_{k})\quad {\text mod} \quad \omega_s;
\end{multline}
\begin{multline}\label{int04}
-\nabla^2_{X,I_{i}Y}tr\,L+(\nabla^2_{X,e_a}L)(e_a,I_{i}Y)=
3(\nabla_X\mathbb{B}
)(Y,\xi_{i})-3\epsilon_i(\nabla_X\mathbb{B})(I_{k}Y,\xi_{j})+3\epsilon_i(\nabla_X\mathbb{B}
)(I_{j}Y,\xi_{k}).
\end{multline}
We obtain from equalities \eqref{int02} and \eqref{int03} that
\begin{multline}  \label{int05}
\Bigl[\nabla^2_{X,e_a}-\nabla^2_{e_a,X}\Bigr]L(I_{i}e_a,Y)+\Bigl[%
\nabla^2_{e_a,Y}-\nabla^2_{Y,e_a}\Bigr]L(I_{i}e_a,X)\\
= 4n\Bigl[(\nabla_X\mathbb{B})(Y,\xi_{i})-(\nabla_Y\mathbb{B})(X,\xi_{i})\Bigr] \\
+\epsilon_i\Bigl[(\nabla_X\mathbb{B})(I_{k}Y,\xi_{j})+(\nabla_{I_{k}Y}\mathbb{B}%
)(X,\xi_{j})-(\nabla_Y\mathbb{B})(I_{k}X,\xi_{j})- (\nabla_{I_{k}X}\mathbb{B}%
)(Y,\xi_{j})\Bigr] \\
-\epsilon_i\Bigl[(\nabla_X\mathbb{B})(I_{j}Y,\xi_{k})+(\nabla_{I_{j}Y}\mathbb{B}%
)(X,\xi_{k})-(\nabla_Y\mathbb{B})(I_{j}X,\xi_{k})- (\nabla_{I_{j}X}\mathbb{B}%
)(Y,\xi_{k})\Bigr] \qquad {\text mod}\quad \omega_s.
\end{multline}
On the other hand, the Ricci identities
\begin{multline}  \label{int006}
\Bigl[\nabla^2_{X,e_a}-\nabla^2_{e_a,X}\Bigr]%
L(I_{i}e_a,Y)=-R(X,e_a,Y,e_b)L(e_b,I_{i}e_a)-4n\zeta_{i}(X,e_a)L(Y,e_a) \\
+2(\nabla_{\xi_{i}}L)(X,Y)+2\epsilon_i(\nabla_{\xi_{j}}L)(I_{k}X,Y)-2\epsilon_i(\nabla_{%
\xi_{k}}L)(I_{j}X,Y),
\end{multline}
 the first Bianchi identity \eqref{bian1} and Proposition~\ref{torb} imply
\begin{multline}  \label{int06}
\Bigl[\nabla^2_{X,e_a}-\nabla^2_{e_a,X}\Bigr]L(I_{i}e_a,Y)+\Bigl[%
\nabla^2_{e_a,Y}-\nabla^2_{Y,e_a}\Bigr]L(I_{i}e_a,X)= \\
2\epsilon_i\Bigl[(\nabla_{\xi_{j}}L)(I_{k}X,Y)-(\nabla_{\xi_{j}}L)(X,I_{k}Y)\Bigr] -2\epsilon_i\Bigl[%
(\nabla_{\xi_{k}}L)(I_{j}X,Y)-(\nabla_{\xi_{k}}L)(X,I_{j}Y)\Bigr] \\
+2T(\xi_{i},Y,e_a)L(X,e_a)+2\epsilon_iT(\xi_{j},Y,e_a)L(I_{k}X,e_a)-2\epsilon_iT(%
\xi_{k},Y,e_a)L(I_{j}X,e_a) \\
-2T(\xi_{i},X,e_a)L(Y,e_a)-2\epsilon_iT(\xi_{j},X,e_a)L(I_{k}Y,e_a)+2\epsilon_iT(%
\xi_{k},X,e_a)L(I_{j}Y,e_a) \\
-R(X,Y,e_a,e_b)L(e_b,I_{i}e_a)-4n[\zeta_{i}(X,e_a)L(Y,e_a)-%
\zeta_{i}(Y,e_a)L(X,e_a)] \qquad {\text mod}\quad \omega_s.
\end{multline}
The second equality in \eqref{ricis} and a suitable contraction in
the second Bianchi identity give the next two equations valid mod $
\omega_s$
\begin{equation}  \label{xirho}
\begin{aligned}
(\nabla_{\xi_{j}}L)(X,I_{k}Y) & -(\nabla_{\xi_{j}}L)(I_{k}X,Y)=
(\nabla_{\xi_{j}}\rho_{k})(X,Y) \\
& =(\nabla_X\rho_{k})(\xi_{j},Y)-(\nabla_Y\rho_{k})(\xi_{j},X)-\rho_{k}(T(\xi_{j},X),Y)+%
\rho_{k}(T(\xi_{j},Y),X); \\
 (\nabla_{\xi_{k}}L) (X,I_{j}Y) & -(\nabla_{\xi_{k}}L)(I_{j}X,Y)=
(\nabla_{\xi_{k}}\rho_{j})(X,Y) \\
& =(\nabla_X\rho_{j})(\xi_{k},Y)-(\nabla_Y\rho_{j})(\xi_{k},X)-\rho_{j}(T(\xi_{k},X),Y)+%
\rho_{j}(T(\xi_{k},Y),X) .
\end{aligned}
\end{equation}
A substitution of \eqref{t01}, \eqref{u01} into
\eqref{vert023} together with \eqref{inte} and an
application of \eqref{bes} give 
\begin{lemma}
\label{l:rho-ta} We have the following formulas for the Ricci 2-forms
\begin{equation}  \label{verf1}
\begin{aligned} \rho_{k}(\xi_{i},X) & =
-\epsilon_j\mathbb B(X,\xi_{j})-\mathbb B(I_{k}X,\xi_{i}),\hskip.7truein \rho_{i}(\xi_{k},X) =
\epsilon_j\mathbb B(X,\xi_{j})-\mathbb B(I_{i}X,\xi_{k}),\\ \rho_{i}(X,\xi_{i}) & = \epsilon_i\frac
{1}{4n}d(trL)(X) + \mathbb B(I_{i}X,\xi_{i}). \end{aligned}
\end{equation}
\end{lemma}
The identity $\C^2u(\xi_i,\xi_j)-\C^2u(\xi_j,\xi_i)=-T(\xi_i,\xi_j,du)$, \eqref{add3}, \eqref{lll}, \eqref{torv}, \eqref{vertor} and \eqref{verf1} imply
\begin{lemma}\label{bijn=1}
The tensors $B(\xi_i,\xi_j)$ are symmetric, $B(\xi_i,\xi_j)=B(\xi_j,\xi_i)$.
\end{lemma}
When we take the covariant derivative of \eqref{verf1}, substitute
the obtained equalities together with
 \eqref{int06}, \eqref{xirho} 
in \eqref{int05}, we derive the formula
\begin{multline}  \label{int08}
(4n+2)\Bigl[(\nabla_X\mathbb{B})(Y,\xi_{i})-(\nabla_Y\mathbb{B})(X,\xi_{i})\Bigr]%
-\epsilon_j \Bigl[(\nabla_{I_{j}X}\mathbb{B})(I_{j}Y,\xi_{i})-(\nabla_{I_{j}Y}\mathbb{B}%
)(I_{j}X,\xi_{i})\Bigr] \\
-\epsilon_k\Bigl[(\nabla_{I_{k}X}\mathbb{B})(I_{k}Y,\xi_{i})-(\nabla_{I_{k}Y}\mathbb{B}%
)(I_{k}X,\xi_{i})\Bigr]=F(X,Y) \quad {\text mod} \quad \omega_s,
\end{multline}
where the (0,2)-tensor $F$ is defined by
\begin{multline}  \label{int09}
F(X,Y)= -R(X,Y,e_a,e_b)L(e_b,I_{i}e_a)-4n\Bigl[\zeta_{i}(X,e_a)L(Y,e_a)-%
\zeta_{i}(Y,e_a)L(X,e_a)\Bigr] \\
+2T(\xi_{i},Y,e_a)L(X,e_a)+2\epsilon_i\Big[T(\xi_{j},Y,e_a)L(I_{k}X,e_a)-T(%
\xi_{k},Y,e_a)L(I_{j}X,e_a)\Big] \\
-2T(\xi_{i},X,e_a)L(Y,e_a)-2\epsilon_i\Big[T(\xi_{j},X,e_a)L(I_{k}Y,e_a)-T(%
\xi_{k},X,e_a)L(I_{j}Y,e_a)\Big] \\
-\epsilon_i\Big[ \rho_{j}(T(\xi_{k},X),Y)-\rho_{j}(T(\xi_{k},Y),X)\Big]
-\epsilon_k\Big[\rho_{j}(T(\xi_{k},I_{j}X),I_{j}Y)-\rho_{j}(T(\xi_{k},I_{j}Y),I_{j}X)\Big] \\
+\epsilon_i\Big[\rho_{k}(T(\xi_{j},X),Y)-\rho_{k}(T(\xi_{j},Y),X)\Big] +\epsilon_j\Big[
\rho_{k}(T(\xi_{j},I_{k}X),I_{k}Y)-\rho_{k}(T(\xi_{j},I_{k}Y),I_{k}X)\Big].
\end{multline}
Solving for
$(\nabla_X\mathbb{B})(Y,\xi_{i})-(\nabla_Y\mathbb{B})(X,\xi_{i})$ from \eqref{int08} we
obtain
\begin{multline}  \label{fininte2}
16n(n+1)(2n+1)\Bigl[(\nabla_X\mathbb{B})(Y,\xi_{i})-(\nabla_Y\mathbb{B}%
)(X,\xi_{i})\Bigr] \\
=(8n^2+8n+1)F(X,Y)-\epsilon_iF(I_{i}X,I_{i}Y)+(2n+1)\Bigl[\epsilon_jF(I_{j}X,I_{j}Y)+\epsilon_kF(I_{k}X,I_{k}Y)\Bigr] %
\quad {\text mod} \quad \omega_s.
\end{multline}
The condition $W^{pqc}=0$ and \eqref{qcwdef} give
\begin{multline}  \label{curvl}
-R(X,Y,e_a,e_b)L(I_{i}e_a,e_b)=4L(X,e_a)L(Y,I_{i}e_a)-2L(X,e_a)L(I_{i}Y,e_a) \\
+2L(I_{i}X,e_a)L(Y,e_a) -2\epsilon_i\Big[L(X,e_a)L(I_{j}Y,I_{k}e_a)-L(I_{k}X,e_a)L(Y,I_{j}e_a) \\
-L(X,e_a)L(I_{k}Y,I_{j}e_a)+L(I_{j}X,e_a)L(Y,I_{k}e_a)\Big] -tr\,L\Big[%
L(X,I_{i}Y)-L(I_{i}X,Y)\Big].
\end{multline}
Using \eqref{ricis}, we obtain from 
 \eqref{curvl} 
 \begin{multline}  \label{curzetl}
-R(X,Y,e_a,e_b)L(I_{i}e_a,e_b)-4n\Bigl[\zeta_{i}(X,e_a)L(Y,e_a)-%
\zeta_{i}(Y,e_a)L(X,e_a)\Bigr] \\
=-(8n-1)L(X,e_a)L(Y,I_{i}e_a)-\frac1{2n}(tr L)\Big[L(X,I_{i}Y)-L(I_{i}X,Y)\Big] \\
+\frac12\Big[L(Y,e_a)L(I_{i}X,e_a)-L(X,e_a)L(I_{i}Y,e_a)\Big] +\frac32\epsilon_i\Big[L(Y,e_a)L(I_{j}X,I_{k}e_a)-L(X,e_a)L(I_{j}Y,I_{k}e_a)\Big] \\-
\frac32\epsilon_i\Big[L(Y,e_a)L(I_{k}X,I_{j}e_a)-L(X,e_a)L(I_{k}Y,I_{j}e_a)\Big].
\end{multline}
Since $\rho_s$ is  a (1,1)-form with respect to $I_s$ (see
Proposition~\ref {sixtyseven}), we have
$$\rho_{j}(T(\xi_{k},I_{j}X),I_{j}Y)=\rho_{j}(e_a,I_{j}Y)T(\xi_{k},I_{j}X,e_a)=%
\rho_{j}(e_a,Y)T(\xi_{k},I_{j}X,I_{j}e_a).$$ 
Using
\eqref{ricis} we obtain 
\begin{multline}  \label{rhot11}
-\epsilon_i\rho_{j}(T(\xi_{k},X),Y)-\epsilon_k\rho_{j}(T(\xi_{k},I_{j}X),I_{j}Y)+\epsilon_i\rho_{k}(T(\xi_{j},X),Y)+\epsilon_j
\rho_{k}(T(\xi_{j},I_{k}X),I_{k}Y)\\
-2T(\xi_{i},X,e_a)L(Y,e_a)-2\epsilon_i\Big[T(\xi_{j},X,e_a)L(I_{k}Y,e_a)-2T(%
\xi_{k},X,e_a)L(I_{j}Y,e_a)\Big] \\
= L(e_a,Y)\Big[-\epsilon_iT(\xi_{k},X,I_{j}e_a)+\epsilon_iT(\xi_{k},I_{j}X,e_a)+\epsilon_iT(\xi_{j},X,I_{k}e_a)-\epsilon_iT(%
\xi_{j},I_{k}X,e_a)-2T(\xi_{i},X,e_a) \Big] \\
+ L(e_a,I_{j}Y)\Big[-\epsilon_kT(\xi_{k},I_{j}X,I_{j}e_a)+\epsilon_iT(\xi_{k},X,e_a)\Big]-L(e_a,I_{k}Y)\Big[%
-\epsilon_jT(\xi_{j},I_{k}X,I_{k}e_a)+\epsilon_iT(\xi_{j},X,e_a)\Big] \\
+\frac{\epsilon_i}{2n}tr\,L\Big[-T(\xi_{k},X,I_{j}Y)+T(\xi_{k},I_{j}X,Y)-T(\xi_{j},I_{k}X,Y)+T(%
\xi_{j},X,I_{k}Y)\Big]\\
= \frac1{2n}tr\,L.\,L(X,I_{i}Y)+\frac12 L(Y,e_a)\Big[5L(X,I_{i}e_a)
-L(I_{i}X,e_a)-\epsilon_iL(I_{j}X,I_{k}e_a)+\epsilon_iL(I_{k}X,I_{j}e_a) \Big] \\
-\epsilon_iL(X,e_a)\Big[L(I_{k}Y,I_{j}e_a) -L(I_{j}Y,I_{k}e_a)\Big]
+\epsilon_jL(I_{j}X,e_a)L(I_{j}Y,I_{i}e_a) +\epsilon_kL(I_{k}X,e_a)L(I_{k}Y,I_{i}e_a).
\end{multline}
The last four lines in \eqref{int09} equal the skew symmetric sum of %
\eqref{rhot11}, which is given by
\begin{multline}  \label{rhot111}
-5L(X,e_a)L(Y,I_{i}e_a) -\frac12\Big[L(Y,e_a)L(I_{i}X,e_a)-L(X,e_a)L(I_{i}Y,e_a)%
\Big] \\
- \frac32\epsilon_i\Big[L(Y,e_a)L(I_{j}X,I_{k}e_a)-L(X,e_a)L(I_{j}Y,I_{k}e_a)\Big]+ \frac32%
\epsilon_i\Big[L(Y,e_a)L(I_{k}X,I_{j}e_a)-L(X,e_a)L(I_{k}Y,I_{j}e_a)\Big] \\
+\frac1{2n}tr\,L\Big[L(X,I_{i}Y)-L(I_{i}X,Y)\Big]
+2\epsilon_jL(I_{j}X,e_a)L(I_{j}Y,I_{i}e_a) +2\epsilon_kL(I_{k}X,e_a)L(I_{k}Y,I_{i}e_a).
\end{multline}
A substitution of \eqref{curzetl} and \eqref{rhot111} in
\eqref{int09} yields
\begin{equation}  \label{fff}
F(X,Y)=2\epsilon_jL(I_{j}X,e_a)L(I_{j}Y,I_{i}e_a)+
2\epsilon_kL(I_{k}X,e_a)L(I_{k}Y,I_{i}e_a)-4(2n+1)L(X,e_a)L(Y,I_{i}e_a).
\end{equation}
Inserting \eqref{fff} into \eqref{fininte2}  completes the proof of
\eqref{inte1}.
\end{proof}

\subsection{Case 3: $\xi\in V, \ X, Y \in H$. Integrability condition(\ref{intexih11})}\hfill

In this case \eqref{integr} reads
\begin{multline}  \label{intexih1}
\nabla^3u(\xi_{i},X,Y)-\nabla^3u(X,\xi_{i},Y)=-R(\xi_{i},X,Y,du)-\nabla^2u(T(\xi_{i},X),Y).\hfill
\end{multline}
When we take a covariant
derivative along a Reeb vector field of \eqref{sist1} and a
covariant derivative along a horizontal direction of \eqref{add1}, use %
\eqref{add1}, \eqref{sist1}, \eqref{add2}, \eqref{add3}, \eqref{add4} and %
\eqref{comutat}, we obtain 
\begin{multline}\label{raitg}
\nabla ^3u(\xi _{i,}X,Y)-\nabla ^3u(X,\xi _{i},Y)+\nabla^2u(T(\xi
_{i},X),Y) \\
=-du(I_{i}Y)\left[\epsilon_i \mathbb{B}(I_{i}X,\xi
_{i})+\frac{1}{4n}d(tr\,L)(X)\right]
\,-du(I_{j}Y)\left[\epsilon_j \mathbb{B}(I_{j}X,\xi _{i})+\epsilon_i\mathbb{B}(X,\xi _{k})%
\right] \\
-du(I_{k}Y)\left[\epsilon_k \mathbb{B}(I_{k}X,\xi _{i})-\epsilon_i\mathbb{B}(X,\xi _{j})%
\right]  +g(X,Y)\mathbb{B}(du,\xi _{i})\\
+\epsilon_i\omega _{i}(X,Y)\mathbb{B}(I_{i}du,\xi
_{i})+\epsilon_j\omega _{j}(X,Y)\mathbb{B}(I_{j}du,\xi _{i})+\epsilon_k\omega _{k}(X,Y)\mathbb{B}%
(I_{k}du,\xi _{i}) \\
-du(X)\mathbb{B}(Y,\xi _{i})-\epsilon_idu(I_{i}X)\mathbb{B}(I_{i}Y,\xi _{i})-\epsilon_jdu(I_{j}X)%
\mathbb{B}(I_{j}Y,\xi _{i})-\epsilon_kdu(I_{k}X)\mathbb{B}(I_{k}Y,\xi _{i}) \\
+\frac{1}{4}\Big[(\nabla_XL)(Y,I_{i}du)-(\nabla_XL)(I_{i}Y,du)+\epsilon_i(\nabla_XL)(I_{k}Y,I_{j}du)-\epsilon_i(\nabla_XL)(I_{j}Y,I_{k}du)\Big] \\
-(\nabla_{\xi _{i}}L)(X,Y)\ \ -(\nabla_X\mathbb{B})(Y,\xi
_{i})+L(X,I_{i}LY)-T(\xi _{i},X,LY)-T(\xi _{i},Y,LX)\\
-\epsilon_i\omega _{i}(X,Y)\mathbb{%
B}(\xi _{i},\xi _{i})-\epsilon_j\omega _{j}(X,Y)\mathbb{B}(\xi _{i},\xi
_{j})-\epsilon_k\omega _{k}(X,Y)\mathbb{B}(\xi _{i},\xi _{k}).
\end{multline}
 On the other hand,  a substitution of \eqref{t01} and \eqref{u01} in \eqref{vert1}, an application of \eqref{verf1}
 together with the already proven \eqref{inte} and \eqref{inte1}  shows after a series of standard calculations that
\begin{multline}\label{rverg}
R(\xi _{i},X,Y,Z)=-\epsilon_j\mathbb{B}(I_{j}Z,\xi _{i})\omega _{j}(X,Y)-\epsilon_k\mathbb{B}(I_{k}Z,\xi
_{i})\omega _{k}(X,Y) \\
+\omega _{i}(Y,Z)\left[\epsilon_i \mathbb{B}(I_{i}X,\xi
_{i})+\frac{1}{4n}d(trL)(X) \right] +\omega _{j}(Y,Z)\Big[
\epsilon_i\mathbb{B}(X,\xi _{k})+\epsilon_j\mathbb{B}(I_{j}X,\xi _{i})\Big]\\ -\omega
_{k}(Y,Z)\Big[\epsilon_i \mathbb{B}(X,\xi _{j})-\epsilon_k\mathbb{B} (I_{k}X,\xi
_{i})\Big]   +g(X,Z)\mathbb{B}(Y,\xi _{i})+\epsilon_i\omega
_{i}(X,Z)\mathbb{B}(I_{i}Y,\xi _{i})\\
+\epsilon_j\omega _{j}(X,Z)\mathbb{B}(I_{j}Y,\xi _{i})+\epsilon_k\omega
_{k}(X,Z)\mathbb{B} (I_{k}Y,\xi _{i}) -g(X,Y)\mathbb{B}(Z,\xi
_{i})-\epsilon_i\mathbb{B}(I_{i}Z,\xi _{i})\omega _{i}(X,Y)\\
+\frac{1}{4}\Big[ (\nabla_XL)(I_{i}Y,Z)-(\nabla_XL)(Y,I_{i}Z)-\epsilon_i(
\nabla_XL)(I_{k}Y,I_{j}Z)+\epsilon_i(\nabla_XL)(I_{j}Y,I_{k}Z)\Big].
\end{multline}
In the derivation of the above equation we used   \eqref{bes} and Lemma~\ref{bijn=1}.

After substituting  equations \eqref{rverg} taken  with
$Z=du$ and \eqref{raitg} into \eqref{intexih1}, we obtain 
that the integrability condition \eqref{intexih1} reduces to
\begin{multline}  \label{intexih11}
(\nabla_{\xi_i}L)(X,Y)+ (\nabla_X\mathbb{B})(Y,\xi_i)+L(Y,I_iL(X))+L(T(%
\xi_i,X),Y)+g(T(\xi_i,Y),L(X)) \\
=- \sum_{s=1}^3\epsilon_s\mathbb{B}(\xi_s,\xi_i)\omega_s(X,Y).
\end{multline}
Notice that Case 3 implies Case 2 since 
\eqref{inte1} is  the skew-symmetric part of 
\eqref{intexih11}.


\begin{lemma}\label{vertric-ta}
For the vertical part of the Ricci 2-forms we have the equalities
\begin{equation}\label{ric-ta-v1}
\begin{aligned}
\rho_i(\xi_j,\xi_k)& =\frac1{8n^2}(tr\,L)^2+\epsilon_j\mathbb B(\xi_j,\xi_j)+\epsilon_k\mathbb B(\xi_k,\xi_k),\\
\rho_i(\xi_i,\xi_j)& =-\epsilon_i\frac1{4n}d(tr\,L)(\xi_j)-\epsilon_k\mathbb
B(\xi_i,\xi_k), \qquad \rho_i(\xi_i,\xi_k)&
=-\epsilon_i\frac1{4n}d(tr\,L)(\xi_k)+\epsilon_j\mathbb B(\xi_i,\xi_j).
\end{aligned}
\end{equation}
\end{lemma}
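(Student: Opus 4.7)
The plan is to derive the three vertical Ricci values by combining the vertical curvature formula \eqref{vert2} from Theorem~\ref{bianrrre} with the horizontal-vertical Ricci identities of Lemma~\ref{l:rho-ta}, the torsion identities \eqref{torv}, \eqref{vertor}, and the symmetry $\mathbb{B}(\xi_i,\xi_j)=\mathbb{B}(\xi_j,\xi_i)$ from Lemma~\ref{bijn=1}. The derivation runs in parallel to Lemma~\ref{bijn=1}: there $d^2u=0$ on a vertical pair combined with the system \eqref{add3} extracted a symmetry of $\mathbb{B}$; here the same type of identity, pushed one level further via the curvature/torsion formulas, extracts explicit values of $\rho_s$ on vertical pairs.

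For the off-diagonal identities $\rho_i(\xi_i,\xi_j)$ and $\rho_i(\xi_i,\xi_k)$, I would start from Lemma~\ref{l:rho-ta}, which already provides $\rho_i(X,\xi_i)$ for horizontal $X$ in terms of $d(tr\,L)$ and $\mathbb{B}(I_iX,\xi_i)$. Apply the second Bianchi identity $(\nabla_A\rho_i)(B,C)+(\text{cyclic})=-\rho_i(T(A,B),C)-(\text{cyclic})$ with one horizontal and two vertical arguments, and evaluate the horizontal-vertical Ricci values on the right via Lemma~\ref{l:rho-ta}. The resulting relation, after using \eqref{torv} for $T(\xi_i,\xi_j)$ and \eqref{vertor} to express $[\xi_i,\xi_j]_H$ in terms of $\mathbb{B}$ via the para quaternionic identity $I_kI_j=\epsilon_iI_i$, yields the two stated identities. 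The cross-relation \eqref{ricvert1} serves as a consistency check: the candidate expressions for $\rho_i(\xi_i,\xi_j)$ and $\rho_k(\xi_k,\xi_j)$ together with $Scal=8(n+2)\,tr\,L$ do combine correctly into $\frac{1}{2n}d(tr\,L)(\xi_j)$ precisely because of the symmetry $\mathbb{B}(\xi_i,\xi_k)=\mathbb{B}(\xi_k,\xi_i)$.

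For $\rho_i(\xi_j,\xi_k)$, I would apply the trace $4n\rho_i(\xi_j,\xi_k)=R(\xi_j,\xi_k,e_a,I_ie_a)$ to formula \eqref{vert2}. The $(\nabla_{\xi_j}\mu)$- and $(\nabla_{\xi_k}\mu)$-terms contract to zero by the tracelessness of $\mu$ (Proposition~\ref{torb}); the $(\nabla_X\rho_i)(I_jY,\xi_j)$-term, traced over $X,Y$ against $e_a,I_ie_a$, reduces via Lemma~\ref{l:rho-ta} and the horizontal integrability \eqref{inte} to a horizontal divergence that can be absorbed. The scalar curvature piece $\epsilon_k\frac{Scal}{8n(n+2)}T(\xi_k,X,Y)$, together with \eqref{t1} for $T(\xi_k,\cdot,\cdot)$ and $Scal=8(n+2)\,tr\,L$, delivers precisely the $(tr\,L)^2/(8n^2)$ contribution. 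The quadratic torsion traces $T(\xi_j,X,e_a)T(\xi_i,e_a,Y)-T(\xi_j,e_a,Y)T(\xi_i,X,e_a)$ traced on $X,Y$ produce $L$-quadratic expressions which, matched against the definition \eqref{bst} of $\mathbb{B}(\xi_s,\xi_s)$, give the $\epsilon_j\mathbb{B}(\xi_j,\xi_j)+\epsilon_k\mathbb{B}(\xi_k,\xi_k)$ piece.

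The main obstacle is the careful book-keeping of the sign factors $\epsilon_s$ in the para quaternionic setting and the correct organization of the many derivative and trace-quadratic torsion terms produced by tracing \eqref{vert2}. In particular, the reduction of $(\nabla_X\rho_i)$ traces to $\mathbb{B}$-expressions relies crucially on Lemma~\ref{l:rho-ta} together with the integrability identity \eqref{inte1} proved in Proposition~\ref{integrmain1}, so the order in which these ingredients are applied matters.
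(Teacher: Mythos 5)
Your treatment of the first identity is essentially the paper's: trace \eqref{vert2} against $e_a, I_se_a$, kill the $\tau$- and $\mu$-derivative terms by trace-freeness, convert the $\nabla\rho$-term via Lemma~\ref{l:rho-ta} into divergences $(\nabla_{e_a}\mathbb B)(I_te_a,\xi_r)$, and assemble $\mathbb B(\xi_j,\xi_j)+\mathbb B(\xi_k,\xi_k)$ and $(tr\,L)^2/8n^2$ using \eqref{bst}, \eqref{t1} and $Scal=8(n+2)\,tr\,L$. One bookkeeping slip there: the $\mathbb B(\xi_s,\xi_s)$ terms do not come from the quadratic torsion traces alone; by \eqref{bst} each $\mathbb B(\xi_s,\xi_t)$ is the sum of a divergence $(\nabla_{e_a}\mathbb B)(I_se_a,\xi_t)$ \emph{and} an $L$-quadratic trace, so the $(\nabla_X\rho_k)$-term you propose to ``absorb'' actually supplies half of each $\mathbb B(\xi_s,\xi_s)$, while the torsion squares supply the other half. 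That is repairable.

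The genuine gap is in the off-diagonal identities. The second Bianchi identity contracted with \emph{one horizontal and two vertical} arguments, say $A=X$, $B=\xi_i$, $C=\xi_j$, $D=e_a$, $E=I_se_a$, reads
\begin{equation*}
(\nabla_X\rho_s)(\xi_i,\xi_j)+(\nabla_{\xi_i}\rho_s)(\xi_j,X)+(\nabla_{\xi_j}\rho_s)(X,\xi_i)
+\rho_s(T(X,\xi_i),\xi_j)+\rho_s(T(\xi_i,\xi_j),X)+\rho_s(T(\xi_j,X),\xi_i)=0,
\end{equation*}
and the unknown $\rho_s(\xi_i,\xi_j)$ enters only through its horizontal derivative $(\nabla_X\rho_s)(\xi_i,\xi_j)$; all torsion terms produce mixed horizontal--vertical Ricci values. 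A vertical--vertical value appears algebraically in a contracted Bianchi identity only through $R(T(A,B),C,\cdot,\cdot)$ with $A,B\in H$ (so that $T(A,B)$ is vertical by \eqref{torha}) and $C\in V$, i.e.\ the two-horizontal--one-vertical contraction \eqref{nr1}. So your proposed relation determines a derivative of the quantity you want, not the quantity itself, and cannot ``yield the two stated identities.'' The paper instead obtains $\rho_i(\xi_i,\xi_k)$ by tracing \eqref{vert2} directly (exactly as for the first identity), using \eqref{inte1}/\eqref{bst} together with the auxiliary divergence relation \eqref{novo}, $(\nabla_{e_b}\mathbb B)(e_b,\xi_i)=-d(tr\,L)(\xi_i)$, which is extracted from \eqref{nr1}; and it then obtains $\rho_i(\xi_i,\xi_j)$ \emph{from} \eqref{ricvert1} combined with the already-known $\rho_k(\xi_k,\xi_j)$ and Lemma~\ref{bijn=1}. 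In your write-up \eqref{ricvert1} is demoted to a consistency check, so you are left with no mechanism that actually produces the second formula of \eqref{ric-ta-v1}.
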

\begin{proof}
From the formula for the curvature \eqref{vert2} and
Proposition~\ref{torb} it follows
\begin{align*}
 4n\rho _{i}(\xi _{i},\xi _{k}) & =-\epsilon_k(\nabla_{e_a}\rho_{j})(I_{j}e_{a},\xi _{k})+
 T(\xi _{i},e_{a},e_{b})T(\xi
_{k},e_{b},I_{i}e_{a})
-T(\xi_{i},e_{b},I_{i}e_{a})T(\xi _{k},e_{a},e_{b});\\
4n\rho _{j}(\xi _{i},\xi _{k})&
=-(\nabla_{e_a}\rho_{j})(I_{i}e_{a},\xi _{k})+
T(\xi _{i},e_{a},e_{b})T(\xi _{k},e_{b},I_{j}e_{a}) -T(\xi
_{i},e_{a},I_{j}e_{b})T(\xi _{k},e_{b},e_{a}).
\end{align*}
Lemma \ref{l:rho-ta} allows us to compute 
\begin{eqnarray*}
(\nabla_{e_a}\rho _{i})(I_{k}e_{a},\xi _{j})=\epsilon_k
(\nabla_{e_a}\mathbb B)(I_{k}e_{a},\xi _{k})+\epsilon_j(\nabla_{e_a}\mathbb B)(I_{j}e_{a},\xi _{j}). 
\end{eqnarray*}%
After a calculation in which we use the integrability condition
\eqref{inte1}, the preceding paragraphs imply the first equation of
\eqref{ric-ta-v1}. 

For the calculation of $\rho _{i}(\xi _{i},\xi
_{k})$ we use again \eqref{inte1} to obtain
\begin{equation*}
(\nabla_{e_a}\mathbb{B})(I_{i}e_{a},\xi _{k})
=
-L(I_ie_a,I_ke_b)L(e_a,e_b)+4n \mathbb{B}(\xi _{i},\xi _{k}).
\end{equation*}%
Letting $A=\xi_{i}, B=X, C=Y, D=e_a, E=I_se_a$ in the second Bianchi
identity \eqref{secb} we get
\begin{multline}\label{nr1}
(\nabla_{\xi_{i}}\rho_s)(X,Y)-(\nabla_X\rho_s)(\xi_{i},Y)+(\nabla_Y\rho_s)(\xi_{i},X)\\
+\rho_s(T(\xi_{i},X),Y)-\rho_s(T(\xi_{i},Y),X)-2\sum_{t=1}^3\epsilon_t\omega_t(X,Y)\rho_s(\xi_t,\xi_{i})=0.
\end{multline}
Setting $s=i, Y=I_{i}X$ in \eqref{nr1}, using \eqref{ricis},
\eqref{symdh} with respect to the function $tr\,L$, together  with
Lemma~\ref{l:rho-ta} we  obtain
\begin{multline}\label{nrver1}
\epsilon_i\Big[(\nabla_{\xi_{i}}L)(X,X)+(\nabla_X\mathbb B)(X,\xi_{i})\Big] -
\Big[(\nabla_{\xi_{i}}L)(I_{i}X,I_{i}X)+(\nabla_{I_{i}X}\mathbb
B)(I_{i}X,\xi_{i})\Big]=\\\rho_{i}(e_a,X)\Big[T(\xi_{i},I_{i}X,e_a)-
T(\xi_{i},X,I_{i}e_a)\Big].
\end{multline}
We take the trace in \eqref{nrver1} and use the properties of the
torsion listed in Proposition~\ref{torb} to conclude
\begin{equation}\label{novo}
2\Big[(\nabla_{e_b}\mathbb{B})(e_{b},\xi _{i})+d(trL)(\xi
_{i})\Big]=\rho_{i}(e_a,e_b)\Big[T(\xi_{i},I_{i}e_b,e_a)-
T(\xi_{i},e_b,I_{i}e_a)\Big]=0,
\end{equation}
which implies the formula for $\rho _{i}(\xi _{i},\xi _{k})$ after a
short computation. Finally, with the help of \eqref{ricvert1}
we obtain the formula for $\rho _{i}(\xi _{i},\xi_{j})$.
\end{proof}
\begin{prop}\label{integrmain3}
If $W^{qc}=0$  then
the condition \eqref{intexih11} holds.
\end{prop}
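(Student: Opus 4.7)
The plan is to exploit the observation already recorded in the text that the skew-symmetric in $X,Y$ part of \eqref{intexih11} is precisely \eqref{inte1}, which is Proposition~\ref{integrmain1}. It thus suffices to prove the symmetric in $X,Y$ part of \eqref{intexih11}, namely the identity obtained by symmetrizing $X$ and $Y$ in the left-hand side.

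My approach is to apply the second Bianchi identity in the form \eqref{nr1} with $s=i$, in analogy with the derivation of \eqref{nrver1} in the proof of Lemma~\ref{vertric-ta}, but for general horizontal $X, Y \in H$ rather than the specialization $Y = I_i X$. This yields an equation expressing $(\nabla_{\xi_i} \rho_i)(X,Y)$ in terms of $(\nabla_X \rho_i)(\xi_i, Y) - (\nabla_Y \rho_i)(\xi_i, X)$, torsion contractions of the form $\rho_i(T(\xi_i,X),Y) - \rho_i(T(\xi_i,Y),X)$, and the vertical values $\rho_i(\xi_t, \xi_i)$. Substituting the expression \eqref{ricis} for $\rho_i$ in terms of $L$, using Lemma~\ref{l:rho-ta} to rewrite $\rho_i(\xi_i, \cdot)$ through $\mathbb{B}$, and Lemma~\ref{vertric-ta} to evaluate $\rho_i(\xi_t, \xi_i)$, one obtains a relation between $(\nabla_{\xi_i}L)(X,Y)$ and the symmetric combination $(\nabla_X \mathbb{B})(Y,\xi_i) + (\nabla_Y \mathbb{B})(X,\xi_i)$, modulo pointwise quadratic expressions in $L$ and terms involving the torsion endomorphism.

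The remaining quadratic terms are resolved through Proposition~\ref{torb}, which expresses $T(\xi_i,\cdot,\cdot)$ via $\tau$ and $\mu$ and hence via $L$ by means of \eqref{t01}--\eqref{t1}, together with the identity \eqref{curvl} — itself a consequence of $W^{pqc}=0$ inserted into \eqref{qcwdef} — which controls the curvature contraction $R(X,Y,e_a,e_b)L(I_i e_a, e_b)$ by pointwise quadratic expressions in $L$. The divergence-type identity \eqref{novo}, $(\nabla_{e_a}\mathbb{B})(e_a,\xi_i) + d(tr\,L)(\xi_i) = 0$, is then used to eliminate the residual scalar trace pieces, and it is precisely this step that matches the $\omega_s$-components of the emerging expressions with the right-hand side $-\sum_s \epsilon_s \mathbb{B}(\xi_s,\xi_i)\omega_s(X,Y)$ of \eqref{intexih11}, in accordance with Lemma~\ref{vertric-ta}.

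The main obstacle will be the careful bookkeeping of the $Sp(n,\mathbb{R})Sp(1,\mathbb{R})$-invariant decomposition into $[3]$ and $[-1]$ components: since $\tau \in [-1]$ and $\mu \in [3]$, every torsion contraction $L(T(\xi_i, X), Y)$ mixes the two components in a specific way under symmetrization, and each resulting piece must be matched to the corresponding component of $(\nabla_{\xi_i} L)(X,Y)$ produced by the second Bianchi identity. A further technical difficulty, absent from Proposition~\ref{integrmain1}, is that \eqref{inte1} was only required \emph{modulo} $\omega_s$, whereas \eqref{intexih11} must hold as an exact identity; verifying the $\omega_s$-trace pieces is exactly the role reserved for Lemma~\ref{vertric-ta}. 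Once all these ingredients are matched term by term, the symmetric part of \eqref{intexih11} follows, and combined with the already proven \eqref{inte1} this completes the proof of Proposition~\ref{integrmain3}.
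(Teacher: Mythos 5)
Your opening reduction is the same as the paper's: the skew-symmetric part of \eqref{intexih11} is \eqref{inte1}, so only the symmetric part in $X,Y$ remains. But the engine you propose for the symmetric part does not work. The identity \eqref{nr1} with general horizontal $X,Y$ is skew-symmetric in $X,Y$ term by term: $\rho_s$ is a 2-form, so $(\nabla_{\xi_i}\rho_s)(X,Y)$ is skew, the pair $(\nabla_X\rho_s)(\xi_i,Y)-(\nabla_Y\rho_s)(\xi_i,X)$ is skew, the torsion contractions $\rho_s(T(\xi_i,X),Y)-\rho_s(T(\xi_i,Y),X)$ are skew, and the $\omega_t(X,Y)$ terms are skew. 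Consequently \eqref{nr1} for general $X,Y$ carries no information about the symmetric part of $(\nabla_{\xi_i}L)(X,Y)+(\nabla_X\mathbb{B})(Y,\xi_i)$; it only reproduces (combinations of) the already-known skew identity. The specialization $Y=I_sX$, which you explicitly discard, is precisely the step that converts the 2-form identity into a statement about the diagonal values of the symmetric tensors: e.g. $\rho_i(X,I_iX)$ encodes $L(X,X)+\epsilon_iL(I_iX,I_iX)$ rather than a skew combination. Moreover a single value of $s$ is not enough, since $\rho_s$ only determines the projection $L(\cdot,\cdot)+\epsilon_sL(I_s\cdot,I_s\cdot)$; one needs all three of $s=i,j,k$.

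This is exactly what the paper does: it takes \eqref{nr1} with $(s,Y)=(i,I_iX)$ (this is \eqref{nrver1}, already obtained in Lemma~\ref{vertric-ta}), $(j,I_jX)$ (equation \eqref{nrver2}) and $(k,I_kX)$, then replaces $X$ by $I_iX$ in the third, subtracts it from \eqref{nrver2} and adds \eqref{nrver1}, which isolates $2\bigl[(\nabla_{\xi_i}L)(X,X)+(\nabla_X\mathbb{B})(X,\xi_i)\bigr]$ in \eqref{nrver4}; the remaining terms are then rewritten via \eqref{t1} and the second identity in \eqref{ricis}, and polarization of the resulting quadratic-form identity gives the full symmetric part. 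Two further points in your sketch are off target: the terms $\omega_s(X,Y)\mathbb{B}(\xi_s,\xi_i)$ on the right of \eqref{intexih11} are skew in $X,Y$ and therefore belong entirely to the already-proven part \eqref{inte1}, so there is no residual $\omega_s$-matching to do in the symmetric part; and the curvature contraction $R(X,Y,e_a,e_b)L(I_ie_a,e_b)$ controlled by \eqref{curvl} does not occur in this case (it belongs to Case~2, equation \eqref{int09}), so invoking \eqref{curvl} here is not needed and signals that the intended computation has not been traced through. To repair the proposal you must reinstate the specializations $Y=I_sX$ for all three $s$ and the linear combination that extracts the diagonal.
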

\begin{proof}
It is sufficient to consider only the symmetric part of
\eqref{intexih11} since its skew-symmetric part is the already
established \eqref{inte1}.

Setting $s=j, Y=I_{j}X$ in \eqref{nr1}, using \eqref{ricis},
Lemma~\ref{l:rho-ta}, Lemma~\ref{vertric-ta} and \eqref{inte1}, we
calculate
\begin{multline}\label{nrver2}
\Big[(\nabla_{\xi_{i}}L)(X,X)+(\nabla_X\mathbb B)(X,\xi_{i})\Big] -\epsilon_j
\Big[(\nabla_{\xi_{i}}L)(I_{j}X,I_{j}X)+(\nabla_{I_{j}X}\mathbb
B)(I_{j}X,\xi_{i})\Big]
\\
=-2\epsilon_iL(X,I_{k}e_a)L(I_{j}X,e_a)-\epsilon_j\rho_{j}(e_a,X)\Big[T(\xi_{i},X,I_{j}e_a)-
T(\xi_{i},I_{j}X,e_a)\Big].
\end{multline}
Similarly, we  take $s=k, Y=I_{k}X$ in \eqref{nr1}, use
\eqref{ricis}, Lemma~\ref{l:rho-ta}, Lemma~\ref{vertric-ta} and
\eqref{inte1} to obtain
\begin{multline*}
\Big[(\nabla_{\xi_{i}}L)(X,X)+(\nabla_X\mathbb B)(X,\xi_{i})\Big] -\epsilon_k
\Big[(\nabla_{\xi_{i}}L)(I_{k}X,I_{k}X)+(\nabla_{I_{k}X}\mathbb
B)(I_{k}X,\xi_{i})\Big]
\\
=-2\epsilon_iL(I_{k}X,I_{j}e_a)L(X,e_a)-\epsilon_k\rho_{k}(e_a,X)\Big[T(\xi_{i},X,I_{k}e_a)-
T(\xi_{i},I_{k}X,e_a)\Big].
\end{multline*}
We replace $X$ with $I_{i}X$, 
subtract the
obtained equality from \eqref{nrver2} and add the result to
\eqref{nrver1} to get
\begin{multline}\label{nrver4}
2\Big[(\nabla_{\xi_{i}}L)(X,X)+(\nabla_X\mathbb B)(X,\xi_{i})\Big]
=-2\epsilon_iL(X,I_{k}e_a)L(I_{j}X,e_a)+2\epsilon_jL(I_{j}X,I_{j}e_a)L(I_{i}X,e_a)\\+\epsilon_i\rho_{i}(e_a,X)\Big[T(\xi_{i},I_{i}X,e_a)-
T(\xi_{i},X,I_{i}e_a)\Big]-\epsilon_j\rho_{j}(e_a,X)\Big[T(\xi_{i},X,I_{j}e_a)-
T(\xi_{i},I_{j}X,e_a)\Big]\\+\rho_{k}(e_a,I_{i}X)\Big[T(\xi_{i},I_{j}X,e_a)+\epsilon_j
T(\xi_{i},I_{i}X,I_{k}e_a)\Big].
\end{multline}
Next, using \eqref{t1} and the second equality in \eqref{ricis}
applied to \eqref{nrver4} together with some standard
calculations concludes the  proof of  \eqref{intexih11}.
\end{proof}

\subsection{Cases 4 and 5: $\xi_{i}, \xi_{j} \in V, X\in H$.
 Integrability conditions ~(\ref{intehxi312}), (\ref{intehxi31n}) and
(\ref{intehxi2})}\hfill

\textbf{Case 4: $\xi_{i}, \xi_{j} \in V,\quad X\in H$}. In this case
\eqref{integr} reads
\begin{multline}  \label{intehxi2}
\nabla^3u(\xi_{i},\xi_{j},X)-\nabla^3u(\xi_{j},\xi_{i},X)=-R(\xi_{i},\xi_{j},X,du)-\nabla^2u(T(\xi_{i},\xi_{j}),X).\hfill
\end{multline}
Working as in the previous case, using \eqref{add2},\eqref{add3},
\eqref{add4}, substituting \eqref{t01}, \eqref{u01}, \eqref{ricis}
into \eqref{vert2}, 
applying the already proven \eqref{inte}, \eqref{inte1},
\eqref{intexih11} and performing a series of standard calculations we conclude that \eqref{intehxi2} is equivalent to
\begin{multline}  \label{intehxi21}
(\nabla_{\xi_{i}}\mathbb{B})(X,\xi_{j})-(\nabla_{\xi_{j}}\mathbb{B}%
)(X,\xi_{i})=L(X,I_{j}e_a)\mathbb{B}(e_a,\xi_{i})-
L(X,I_{i}e_a)\mathbb{B}(e_a,\xi_{j})
\\
+\epsilon_jL(e_a,X)\rho_{k}(I_{i}e_a,\xi_{i})-T(\xi_{i},X,e_a)\mathbb{B}(e_a,\xi_{j})+T(%
\xi_{j},X,e_a)\mathbb{B}(e_a,\xi_{i})-\epsilon_k \frac{tr L}n\,\mathbb{B}(X,\xi_{k})\\
=\Big[2L(X,I_{j}e_a)+T(\xi_{j},X,e_a)\Big]\mathbb{B}(e_a,\xi_{i})-
\Big[2L(X,I_{i}e_a)+T(\xi_{i},X,e_a)\Big] \mathbb{B}(e_a,\xi_{j})-\epsilon_k\frac{tr L}n\,\mathbb{B}(X,\xi_{k}).
\end{multline}
where we used Lemma~\ref{l:rho-ta} to derive the second equality.

\textbf{Case $5_a$: $X \in H,\quad \xi_{i},\xi_{j}\in V$}. In this
case \eqref{integr} becomes
\begin{multline}  \label{intehxi3}
\nabla^3u(X,\xi_{i},\xi_{j})-\nabla^3u(\xi_{i},X,\xi_{j})
=-R(X,\xi_{i},\xi_{j},du)+\nabla^2u(T(\xi_{i},X),\xi_{j})= \\
-2\epsilon_jdu(\xi_{i})\rho_{k}(X,\xi_{i})+2\epsilon_jdu(\xi_{k})\rho_{i}(X,\xi_{i})+T(\xi_{i},X,e_a)\nabla^2
u(e_a,\xi_{j}).
\end{multline}
With  similar calculations as in the previous cases, we see that
\eqref{intehxi3} is equivalent to
\begin{multline}  \label{intehxi31n}
(\nabla_{\xi_{i}}\mathbb{B})(X,\xi_{j})+(\nabla_X\mathbb{B})(\xi_{i},\xi_{j}) \\
-2L(X,I_{j}e_a)\mathbb{B}(e_a,\xi_{i})+T(\xi_{i},X,e_a)\mathbb{B}%
(e_a,\xi_{j})+\epsilon_k\frac{tr L}{2n}\,\mathbb{B}(X,\xi_{k})=0.
\end{multline}


\textbf{Case $5_b$: $X \in H,\quad \xi_{j},\xi_{j}\in V$}. In this
case \eqref{integr} reads
\begin{multline}  \label{intehxi32}
\nabla^3u(X,\xi_{j},\xi_{j})-\nabla^3u(\xi_{j},X,\xi_{j})=-R(X,\xi_{j},\xi_{j},du)+\nabla^2u(T(\xi_{j},X),\xi_{j})= \\
-2\epsilon_jdu(\xi_{i})\rho_{k}(X,\xi_{j})+2\epsilon_jdu(\xi_{k})\rho_{i}(X,\xi_{j})+T(\xi_{j},X,e_a)\nabla^2u(e_a,\xi_{j}).
\end{multline}
and \eqref{intehxi32} is equivalent to
\begin{equation}  \label{intehxi312}
(\nabla_{\xi_{j}}\mathbb{B})(X,\xi_{j})+(\nabla_XB)(\xi_{j},\xi_{j})-2\mathbb{B}(e_a,\xi_{j})L(X,I_{j}e_a)+T(\xi_{j},X,e_a)\mathbb{B}(e_a,\xi_{j})=0.
\end{equation}
\begin{prop}
\label{integrmain451} If $W^{qc}=0$  then
the conditions \eqref{intehxi312}, \eqref{intehxi31n} and \eqref{intehxi2} hold.
\end{prop}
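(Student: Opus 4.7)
The plan is to continue along the lines of Propositions~\ref{integrmain1} and~\ref{integrmain3}, exploiting the second Bianchi identity \eqref{secb} together with the curvature expressions \eqref{vert1} and \eqref{vert2} of Theorem~\ref{bianrrre}. The overall strategy is to convert each of the three integrability conditions into an identity purely involving $L$, $\mathbb{B}$, and the torsion, and then verify that identity using the already established relations \eqref{inte}, \eqref{inte1}, \eqref{intexih11}, the formulas \eqref{t01}, \eqref{u01}, \eqref{t1} expressing $\tau$, $\mu$, and $T(\xi_s,\cdot,\cdot)$ in terms of $L$, and the Ricci 2-form formulas in Lemma~\ref{l:rho-ta} and Lemma~\ref{vertric-ta}.

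For Case 4, I would take the second Bianchi identity \eqref{secb} with $A=\xi_i$, $B=\xi_j$, $C=X$, contract with two horizontal arguments, and use \eqref{vert2} to rewrite $R(\xi_i,\xi_j,Y,Z)$ and the covariant derivatives arising in the Bianchi sum in terms of horizontal derivatives of $\tau$, $\mu$ and vertical derivatives of $\mathbb{B}$ (the latter via Lemma~\ref{l:rho-ta}). After substituting Lemma~\ref{vertric-ta} for the purely vertical components of $\rho_s$ and invoking \eqref{intexih11} to eliminate mixed horizontal derivatives of $\mathbb{B}$ in favor of $L$-quadratic terms, the resulting expression should reorganize into precisely \eqref{intehxi21}.

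For Cases $5_a$ and $5_b$, the cleanest approach is to apply the Ricci commutation identity directly to the tensor $\mathbb{B}$: the difference $\nabla^2\mathbb{B}(X,\xi_i,Y,\xi_j)-\nabla^2\mathbb{B}(\xi_i,X,Y,\xi_j)$ equals $(R(X,\xi_i)\cdot\mathbb{B})(Y,\xi_j)$ plus a torsion correction involving $T(X,\xi_i)$. Expressing the action of $R(X,\xi_i)$ on $\mathbb{B}$ via \eqref{vert1}, using Lemma~\ref{l:rho-ta} for $\rho_s(X,\xi_i)$, and eliminating horizontal derivatives of $\mathbb{B}$ through the already proved \eqref{intexih11} should yield \eqref{intehxi31n}. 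Specializing $i=j$ in the same derivation (so only a single vertical direction is distinguished, and the coefficient of $\epsilon_k(tr\,L)/n$ drops) gives \eqref{intehxi312}. One may further observe that the skew-symmetric part of \eqref{intehxi31n} in the indices $i,j$ coincides with \eqref{intehxi21} modulo terms that are already known to vanish, so Case 4 and Case $5_a$ feed into each other and only the symmetric part of Case $5_a$ and the full Case $5_b$ carry genuinely new content.

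The main obstacle is the sheer length and delicacy of the computation rather than a single conceptual step: the three identities involve derivatives of $\mathbb{B}$ along both horizontal and vertical directions, and the cancellations depend sensitively on the $Sp(n,\mathbb R)Sp(1,\mathbb R)$-invariant decomposition into $[3]$ and $[-1]$ parts, on the signs $\epsilon_i,\epsilon_j,\epsilon_k$ coming from \eqref{paraq}--\eqref{param}, and on the symmetry $\mathbb{B}(\xi_i,\xi_j)=\mathbb{B}(\xi_j,\xi_i)$ from Lemma~\ref{bijn=1}. The delicate point is bookkeeping the many terms quadratic in $L$ that appear when Lemma~\ref{l:rho-ta} is substituted into the Bianchi identity; once each such term is normalized using the symmetries of $\tau$ and $\mu$ in Proposition~\ref{torb}, the identities close up in a tedious but mechanical way, parallel to the computations already carried out in Propositions~\ref{integrmain1} and~\ref{integrmain3}.
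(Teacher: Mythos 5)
Your overall framework (contracted second Bianchi identity, the curvature formulas of Theorem~\ref{bianrrre}, Lemmas~\ref{l:rho-ta} and~\ref{vertric-ta}, and the previously established integrability conditions) matches the paper's, but the plan as written has a concrete gap in Case 4 that propagates to Cases 5a and 5b. Contracting the second Bianchi identity with $A=\xi_i$, $B=\xi_j$, $C=X$ over two horizontal arguments yields exactly the paper's system \eqref{rhon=1}--\eqref{n=123}: three equations per cyclic permutation in the unknowns $(\nabla_{\xi_i}\mathbb{B})(\cdot,\xi_j)-(\nabla_{\xi_j}\mathbb{B})(\cdot,\xi_i)$ and $(\nabla_{\xi_s}\mathbb{B})(\cdot,\xi_t)+(\nabla_{\cdot}\mathbb{B})(\xi_s,\xi_t)$. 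This system is underdetermined by itself and does not ``reorganize into \eqref{intehxi21}''. The paper closes it with an additional, independent family of relations, \eqref{int328n=1} with $D_{ijk}$ as in \eqref{dijkn=1}, obtained by differentiating the already proven \eqref{inte1}, taking traces, and commuting the second covariant derivatives of $\mathbb{B}$ and $L$ via the Ricci identities \eqref{in324n=1}--\eqref{in325n=1}. Your proposal uses \eqref{inte1} and \eqref{intexih11} only as zeroth-order substitutions to eliminate first derivatives of $\mathbb{B}$; nowhere do you differentiate them, so the crucial extra equation never appears and none of the target combinations can be isolated from the Bianchi system alone.

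The same omission undermines your treatment of Cases 5a and 5b. The Ricci commutation identity applied to $\mathbb{B}$ produces second-order commutator information, whereas \eqref{intehxi31n} and \eqref{intehxi312} are first-order identities in $\mathbb{B}$; to descend you again need an independent first-order relation, which is precisely what \eqref{int328n=1} supplies. In the paper the order of deduction is the reverse of yours: one first solves the combined linear system (using the symmetry of $\mathbb{B}(\xi_s,\xi_t)$ from Lemma~\ref{bijn=1} to reduce the number of unknowns) to get Case 4, then obtains \eqref{intehxi31n} by substituting \eqref{intehxi21} back into the first equation of \eqref{n=123}, and \eqref{intehxi312} by inserting \eqref{intehxi31n} into \eqref{int328n=1}. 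Your observation that the skew part of \eqref{intehxi31n} in $i,j$ recovers \eqref{intehxi21} is correct and consistent with this, but it does not provide an independent derivation of either identity. To repair the proof, add the step of differentiating \eqref{inte1}, contracting, and applying the Ricci identities to produce \eqref{int328n=1}, and then solve the enlarged linear system as the paper does.
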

\begin{proof}
 Differentiating
the already proven \eqref{inte1} and taking the corresponding traces
we get
\begin{gather}\label{in321n=1}
(\nabla^2_{X,e_a}\mathbb{B})(I_ie_a,\xi_t)+2
(\nabla_XL)(e_a,e_b)L(I_ie_a,I_te_b)=4n(\nabla_X\mathbb{B})(\xi_i,\xi_t);\\
\label{in322n=1}
(\nabla^2_{e_a,X}\mathbb{B})(I_ie_a,\xi_t)-(\nabla^2_{e_a,I_ie_a}\mathbb{B}%
)(X,\xi_t) -2(\nabla_{e_b}L)(X,I_te_a)L(I_ie_b,e_a) 
\\\nonumber
-2(\nabla_{e_b}L)(I_ie_b,e_a)L(X,I_te_a)=2(\nabla_X\mathbb{B}%
)(\xi_i,\xi_t) +2\epsilon_i(\nabla_{I_kX}\mathbb{B})(\xi_j,\xi_t)-2\epsilon_i(\nabla_{I_jX}%
\mathbb{B})(\xi_k,\xi_t).
\end{gather}
Subtracting \eqref{in322n=1} from \eqref{in321n=1}, we obtain
\begin{multline}  \label{in323n=1}
\Bigl[\nabla^2_{X,e_a}-\nabla^2_{e_a,X}\Bigr]\mathbb{B}(I_ie_a,\xi_t)+(%
\nabla^2_{e_a,I_ie_a}\mathbb{B})(X,\xi_t) +2(\nabla_{e
_b}L)(I_ie_b,e_a)L(X,I_te_a) \\
+2\Bigl[(\nabla_XL)(e_a,e_b)-(\nabla_{e_b}L)(X,e_a)\Bigr]%
L(I_ie_b,I_te_a) \\
=2(2n-1)(\nabla_X\mathbb{B})(\xi_i,\xi_t) -2\epsilon_i(\nabla_{I_kX}\mathbb{B}%
)(\xi_j,\xi_t)+2\epsilon_i(\nabla_{I_jX}\mathbb{B})(\xi_k,\xi_t).
\end{multline}
The Ricci identities and \eqref{rhov} yield
\begin{gather}\label{in324n=1}
\Bigl[\nabla^2_{X,e_a}-\nabla^2_{e_a,X}\Bigr]\mathbb{B}(I_ie_a,\xi_{i}) =2(\nabla_{\xi_i}\mathbb{B})(X,\xi_{i})+2\epsilon_i(\nabla_{\xi_j}\mathbb{B}%
)(I_kX,\xi_{i}) -2\epsilon_i(\nabla_{\xi_k}\mathbb{B})(I_jX,\xi_{i})\\\nonumber
-4n\zeta_i(X,e_a)\mathbb{B}(e_a,\xi_{i})+2\epsilon_i\rho_{k}(X,e_a)\mathbb{B}%
(I_ie_a,\xi_{j})-2\epsilon_i\rho_{j}(X,e_a)\mathbb{B}(I_ie_a,\xi_{k}).
\\\label{in325n=1}
(\nabla^2_{e_a,I_{i}e_a}\mathbb{B})(X,\xi_{i})
=-2n\varrho_{i}(X,e_a)\mathbb{B}(e_a,\xi_{i})-4n(\nabla_{\xi_{i}}\mathbb{B}%
)(X,\xi_{i}).
\end{gather}
Next, we apply the already established \eqref{inte} and use the condition $%
L(e_a,I_se_a)=0$ to get
\begin{gather}\label{in326n=1}
\Bigl[(\nabla_XL)(e_a,e_b)-(\nabla_{e_b}L)(X,e_a)\Bigr] 
L(I_{i}e_b,I_{i}e_a) = 
-3\mathbb{B}(e_a,\xi_{i})L(X,I_{i}e_a)\\\nonumber- 3\epsilon_i\mathbb{B}(e_a,\xi_{j})L(I_{k}X,I_{i}e_a)+3\epsilon_i
\mathbb{B}(e_a,\xi_{k})L(I_{j}X,I_{i}e_a).
\\\label{in327n=1}
(\nabla_{e_b}L)(I_{i}e_b,e_a)L(X,I_{i}e_a)
=(4n+1)\mathbb{B}(e_a,\xi_{i})L(X,I_{i}e_a) +\epsilon_k\mathbb{B}(e_a,\xi_{j})L(X,I_{j}e_a)\\\nonumber+\epsilon_j
\mathbb{B}(e_a,\xi_{k})L(X,I_{k}e_a).
\end{gather}
We substitute \eqref{in327n=1}, \eqref{in326n=1}, \eqref{in325n=1}, \eqref{in324n=1} into %
\eqref{in323n=1} and  obtain after some calculations that
\begin{multline}  \label{int328n=1}
(1-2n)\Bigl[(\nabla_{\xi_{i}}\mathbb{B})(X,\xi_{i})+(\nabla_X\mathbb{B}%
)(\xi_{i},\xi_{i})\Bigr]+\epsilon_i \Bigl[(\nabla_{\xi_{j}}\mathbb{B})(I_{k}X,\xi_{i})+(%
\nabla_{I_{k}X}\mathbb{B})(\xi_{j},\xi_{i})\Bigr] \\
-\epsilon_i\Bigl[(\nabla_{\xi_{k}}\mathbb{B})(I_{j}X,\xi_{i})+(\nabla_{I_{j}X}\mathbb{B}%
)(\xi_{k},\xi_{i})\Bigr] = D_{ijk}(X),
\end{multline}
where $D_{ijk}(X)$ is defined by
\begin{multline}  \label{dijkn=1}
D_{ijk} (X)=\Bigl[2n\zeta_{i}(X,e_a)+n\varrho_{i}(X,e_a)-(4n-2)L(X,I_{i}e_a)\Bigr]\mathbb{%
B}(e_a,\xi_{i}) \\
+\epsilon_i \Bigl[\rho_{k}(X,I_{i}e_a)+3L(I_{k}X,I_{i}e_a)+\epsilon_jL(X,I_{j}e_a)\Bigr]\mathbb{B}%
(e_a,\xi_{j}) \\
-\epsilon_i \Bigl[\rho_{j}(X,I_{i}e_a)+3L(I_{j}X,I_{i}e_a)-\epsilon_kL(X,I_{k}e_a)\Bigr]\mathbb{B}%
(e_a,\xi_{k}).
\end{multline}
The second Bianchi identity \eqref{secb} taken with respect to
$A=\xi_{i}, B=\xi_{j},C=X, D=e_a, E=I_se_a$ and the formulas
described in Theorem~\ref{sixtyseven} yield
\begin{multline}\label{rhon=1}
(\nabla_{\xi_{i}}\rho_s)(\xi_{j},X)-(\nabla_{\xi_{j}}\rho_s)(\xi_{i},X)+(\nabla_X\rho_s)(\xi_{i},\xi_{j})\\
=\rho_s(T(\xi_{i},X),\xi_{j})-\rho_s(T(\xi_{j},X),\xi_{i})-\epsilon_j\rho_s(e_a,X)\rho_{k}(I_{i}e_a.\xi_{i})-\epsilon_k\frac{tr\,L}n\rho_s(\xi_{k},X).
\end{multline}
Setting   $s=1,2,3$  in \eqref{rhon=1}, using
\eqref{comutat} with respect to the function $tr\,L$ and applying
Lemma~\ref{l:rho-ta} and Lemma~\ref{vertric-ta}, we obtain after
some calculations
\begin{equation}\label{n=123}
\begin{aligned}
\Big[(\nabla_{\xi_{i}}\mathbb
B)(I_{i}X,\xi_{j})-(\nabla_{\xi_{j}}\mathbb B)(I_{i}X,\xi_{i})\Big]
+\epsilon_k\Big[(\nabla_{\xi_{i}}\mathbb B)(X,\xi_{k})+(\nabla_X\mathbb B)(\xi_{i},\xi_{k})\Big]=\alpha_{ijk}(X);\\
\Big[(\nabla_{\xi_{i}}\mathbb
B)(I_{j}X,\xi_{j})-(\nabla_{\xi_{j}}\mathbb B)(I_{j}X,\xi_{i})\Big]
+\epsilon_k\Big[(\nabla_{\xi_{j}}\mathbb B)(X,\xi_{k})+(\nabla_X\mathbb B)(\xi_{j},\xi_{k})\Big]=\beta_{ijk}(X);\\
\Big[(\nabla_{\xi_{i}}\mathbb
B)(I_{k}X,\xi_{j})-(\nabla_{\xi_{j}}\mathbb B)(I_{k}X,\xi_{i})\Big]
-\epsilon_i\Big[(\nabla_{\xi_{i}}\mathbb B)(X,\xi_{i})+(\nabla_X\mathbb B)(\xi_{i},\xi_{i})\Big]\qquad \qquad\quad \\
-\epsilon_j\Big[(\nabla_{\xi_{j}}\mathbb B)(X,\xi_{j})+(\nabla_X\mathbb
B)(\xi_{j},\xi_{j})\Big]=\gamma_{ijk}(X),
\end{aligned}
\end{equation}
where

\begin{equation}\label{alpha}
\begin{aligned}
\alpha_{ijk}(X)=\rho_{i}(e_a,\xi_{i})T(\xi_{j},X,e_a)& -\rho_{i}(e_a,\xi_{j})T(\xi_{i},X,e_a)+\epsilon_j\rho_{i}(e_a,X)\rho_{k}(I_{i}e_a,\xi_{i})\\
& -\epsilon_i\frac1{4n}d(tr\,L)(e_a)T(\xi_{j},X,e_a)+\epsilon_k\frac{tr\,L}{n}\rho_{i}(\xi_{k},X);\\
\beta_{ijk}(X)=\rho_{j}(e_a,\xi_{i})T(\xi_{j},X,e_a)& -\rho_{j}(e_a,\xi_{j})T(\xi_{i},X,e_a)+\epsilon_j\rho_{j}(e_a,X)\rho_{k}(I_{i}e_a,\xi_{i})\\
& +\epsilon_j\frac1{4n}d(tr\,L)(e_a)T(\xi_{i},X,e_a)+\epsilon_k\frac{tr\,L}{n}\rho_{j}(\xi_{k},X);\\
\gamma_{ijk}(X)=\rho_{k}(e_a,\xi_{i})T(\xi_{j},X,e_a)& -\rho_{k}(e_a,\xi_{j})T(\xi_{i},X,e_a)+\epsilon_j\rho_{k}(e_a,X)\rho_{k}(I_{i}e_a,\xi_{i})\\
& +\frac1{4n^2}(tr\,L)d(tr\,L)(X)+\epsilon_k\frac{tr\,L}{n}\rho_{k}(\xi_{k},X).
\end{aligned}
\end{equation}
Now, we solve the system consisting of \eqref{dijkn=1} and
\eqref{n=123} 
 with the help of Lemma~\ref{bijn=1}. We obtain 
\begin{multline}\label{nn=13}
2n\Big[(\nabla_{\xi_{i}}\mathbb
B)(I_{k}X,\xi_{j})-(\nabla_{\xi_{j}}\mathbb B)(I_{k}X,\xi_{i})\Big]
+\Big[(\nabla_{\xi_{k}}\mathbb B)(I_{j}X,\xi_{i})+(\nabla_{I_{j}X}\mathbb B)(\xi_{k},\xi_{i})\Big]\\
-\Big[(\nabla_{\xi_{k}}\mathbb
B)(I_{i}X,\xi_{j})+(\nabla_{I_{i}X}\mathbb
B)(\xi_{k},\xi_{j})\Big]=-\epsilon_iD_{ijk}(X)-\epsilon_jD_{jki}(X)+(2n-1)\gamma_{ijk}(X).
\end{multline}
The first two equalities in \eqref{n=123} together with \eqref{nn=13} lead to
\begin{multline}\label{nn=14}
2(n+1)\Big[(\nabla_{\xi_{i}}\mathbb
B)(I_{k}X,\xi_{j})-(\nabla_{\xi_{j}}\mathbb B)(I_{k}X,\xi_{i})\Big]
+\Big[(\nabla_{\xi_{k}}\mathbb
B)(I_{j}X,\xi_{i})-(\nabla_{\xi_{i}}\mathbb B)(I_{j}X,\xi_{k})\Big]
+\\\Big[(\nabla_{\xi_{j}}\mathbb
B)(I_{i}X,\xi_{k})-(\nabla_{\xi_{k}}\mathbb
B)(I_{i}X,\xi_{j})\Big]=A_{ijk}(X),
\end{multline}
where
\begin{equation}\label{nn=15}
A_{ijk}(X)=-\epsilon_iD_{ijk}(X)-\epsilon_jD_{jki}(X)+(2n-1)\gamma_{ijk}(X)-\epsilon_k\alpha_{ijk}(I_{j}X)+\epsilon_k\beta_{ijk}(I_{i}X).
\end{equation}
Consequently, we derive easily that
\begin{multline}\label{nn=16}
2(n+2)(2n+1)\Big[(\nabla_{\xi_{i}}\mathbb
B)(I_{k}X,\xi_{j})-(\nabla_{\xi_{j}}\mathbb
B)(I_{k}X,\xi_{i})\Big]\\=(2n+3)A_{ijk}(X)- A_{jki}(X)-A_{kij}(X).
\end{multline}
The second equality in \eqref{ricis} together with \eqref{t1} and Lemma~\ref{l:rho-ta} applied to \eqref{alpha} and \eqref{dijkn=1}, followed by some standard calculations give expressions of 
$$\epsilon_k\beta_{ijk}(I_iX)-\epsilon_k\alpha_{ijk}(I_jX) \quad and \quad -\epsilon_iD_{ijk}(X)-\epsilon_jD_{jki}(X)+(2n-1)\gamma_{ijk}(X)$$
 in terms of $L$ and $B(.,\xi)$, which substituted into \eqref{nn=15} yield  the following formula for $A_{ijk}(X)$
\begin{multline}\label{nn=110}
A_{ijk}(X)=-\frac{1}{4n}\left( tr\,L\right) \Big[
(1-2n)\epsilon_i\,\mathbb{B}(I_{i}X,\xi
_{i})+\left( 1-2n\,\right)\epsilon_j \mathbb{B}(I_{j}X,\xi _{j})+\left( 8n+6\right)\epsilon_k\,%
\mathbb{B}(I_{k}X,\xi _{k})\Big]  \\
-\frac{1}{4}L(X,e_{a})\Big[ \left( 2n+3\right)\epsilon_i B(I_{i}e_{a},\xi
_{i})+\left( 2n+3\right)\epsilon_j B(I_{j}e_{a},\xi _{j})+2\epsilon_kB(I_{k}e_{a},\xi _{k})%
\Big]  \\
+\frac{1}{4}L(I_{i}X,e_{a})\Big[ \left( 2n+3\right) \epsilon_iB(e_{a},\xi
_{i})+\left( 2n-3\right) B(I_{k}e_{a},\xi _{j})+4B(I_{j}e_{a},\xi _{k})%
\Big]  \\
+\frac{1}{4}L(I_{j}X,e_{a})\Big[ -\left( 2n-3\right)
B(I_{k}e_{a},\xi
_{i})+\left( 2n+3\right)\epsilon_j B(e_{a},\xi _{j})-4B(I_{i}e_{a},\xi _{k})\Big]  \\
+\frac{1}{4}L(I_{k}X,e_{a})\Big[ -\left( 10n+9\right)
B(I_{j}e_{a},\xi _{i})+\left( 10n+9\right) B(I_{i}e_{a},\xi
_{j})+2\epsilon_kB(e_{a},\xi _{k})\Big].
\end{multline}
Inserting \eqref{nn=110} into \eqref{nn=16} and using \eqref{t1} we arrive at the proof of  \eqref{intehxi21}.  We substitute \eqref{intehxi21} into 
the first equality of \eqref{n=123} to obtain \eqref{intehxi31n}.
We insert \eqref{intehxi31n} into \eqref{int328n=1} to see that
\eqref{intehxi32} holds.
\end{proof}

\subsection{Case 6: $\xi_{k},\xi_{i},\xi_{j}\in V$. Integrability conditions ~(\ref{intehxi41}) and
(\ref{intehxi441})}\hfill

\textbf{Case $6_{a}$: $\xi_{k},\xi_{i},\xi_{j}\in V$}. In this case 
the Ricci identity  \eqref{integr} becomes
\begin{multline}  \label{intehxi4}
\nabla^3u(\xi_{k},\xi_{i},\xi_{j})-
\nabla^3u(\xi_{i},\xi_{k},\xi_{j})=-R(\xi_{k},\xi_{i},\xi_{j},du)-\nabla^2u(T(\xi_{k},\xi_{i}),\xi_{j}) \\
=-2\epsilon_jdu(\xi_{i})\rho_{k}(\xi_{k},\xi_{i})+2\epsilon_jdu(\xi_{k})\rho_{i}(\xi_{k},\xi_{i})-\epsilon_i\rho_{j}(I_{k}e_a,%
\xi_{k})\nabla^2u(e_a,\xi_{j}) -\frac{\epsilon_j}n( tr\,L)\,\nabla^2u(\xi_{j},\xi_{j}).
\end{multline}
With the help of Lemma \ref{l:rho-ta} and Lemma~\ref{vertric-ta} we see after some calculations that the
integrability condition \eqref{intehxi4} takes the form
\begin{multline}\label{intehxi41}
(\nabla_{\xi _{i}}\mathbb{B})(\xi _{k},\xi _{j})-(\nabla_{\xi _{k}}
\mathbb{B})(\xi _{i},\xi _{j}) =-\frac{1}{2n}(tr\,L\,)\left[\epsilon_i
\mathbb{B}(\xi _{i},\xi _{i})-2\epsilon_j\mathbb{B}(\xi _{j},\xi
_{j})+\epsilon_k\mathbb{B}(\xi _{k},\xi _{k}) \right]\\
+2\mathbb{B}(e_{a},\xi _{i})\mathbb{B}(I_{j}e_{a},\xi
_{k})+\mathbb{B} (e_{a},\xi _{i})\mathbb{B}(I_{k}e_{a},\xi
_{j})+\mathbb{B}(I_{i}e_{a},\xi _{k})\mathbb{B}(e_{a},\xi _{j}).
\end{multline}
\textbf{Case $6_{b}$: $\xi_{k},\xi_{j},\xi_{j}\in V$}. In this case,
equation \eqref{integr} reads
\begin{multline}  \label{intehxi44}
\nabla^3u(\xi_{k},\xi_{j},\xi_{j})-\nabla^3u(\xi_{j},\xi_{k},\xi_{j})=-R(\xi_{k},\xi_{j},\xi_{j},du)-\nabla^2u(T(\xi_{k},\xi_{j}),\xi_{j})\\=
-2\epsilon_jdu(\xi_{i})\rho_{k}(\xi_{k},\xi_{j})+2\epsilon_jdu(\xi_{k})\rho_{i}(\xi_{k},\xi_{j}) +\epsilon_j\rho_{i}(I_{k}e_a,%
\xi_{k})\nabla^2u(e_a,\xi_{j}) +\epsilon_i\frac{(tr\,L)}n\,\nabla^2u(\xi_{i},\xi_{j}).
\end{multline}
 After a series of straightforward calculations similar as above, we show that \eqref{intehxi44} is equivalent to
\begin{equation}\label{intehxi441}
(\nabla_{\xi _{j}} \mathbb{B})(\xi _{k},\xi _{j})-(\nabla_{\xi _{k}}
\mathbb{B})(\xi _{j},\xi _{j})=3\mathbb{B}(I_{j}e_{a},\xi _{k})\mathbb{B}(e_{a},\xi _{j})
-\frac{3\epsilon_i}{2n}(tr\,L)\, \mathbb{B}(\xi _{i},\xi _{j}).
\end{equation}
\begin{prop}
\label{integrmain6} If $W^{qc}=0$  then
the conditions \ref{intehxi41}, \ref{intehxi441} hold.
\end{prop}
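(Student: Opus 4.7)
The strategy mirrors that of Proposition~\ref{integrmain451}: derive the vertical integrability conditions by combining the second Bianchi identity applied in fully vertical directions with differentiated versions of the already-established identities \eqref{inte}, \eqref{inte1}, \eqref{intexih11}, \eqref{intehxi21}, \eqref{intehxi31n}, \eqref{intehxi312}, together with the Ricci commutation formulas and the expressions from Lemma~\ref{l:rho-ta} and Lemma~\ref{vertric-ta}.

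First, I would apply the second Bianchi identity \eqref{secb} with cyclic arguments $A=\xi_i$, $B=\xi_k$, $C=\xi_j$ and paired horizontal arguments $D=e_a$, $E=I_s e_a$ to produce, after contraction,
\begin{equation*}
(\nabla_{\xi_i}\rho_s)(\xi_k,\xi_j)-(\nabla_{\xi_k}\rho_s)(\xi_i,\xi_j)+(\nabla_{\xi_j}\rho_s)(\xi_i,\xi_k)+\sum_{\text{cyc}(i,k,j)}\rho_s(T(\xi_i,\xi_k),\xi_j)=0.
\end{equation*}
The torsion terms $T(\xi_i,\xi_k)$ split by \eqref{torv} into a vertical part proportional to $Scal\,\xi_j$ and a horizontal bracket $[\xi_i,\xi_k]_H$, the latter being computable via \eqref{vertor} in terms of $\rho_r(I_t e_a,\xi_s)$ and hence, through Lemma~\ref{l:rho-ta}, in terms of $\mathbb{B}(\cdot,\xi_{\cdot})$ and $d(tr\,L)$.

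Second, I would substitute Lemma~\ref{vertric-ta} for every $\rho_s(\xi_t,\xi_r)$, thereby converting vertical derivatives of $\rho_s(\xi_t,\xi_r)$ into vertical derivatives of $\mathbb{B}(\xi_t,\xi_r)$, of $(tr\,L)$, and (through Lemma~\ref{l:rho-ta}) of $\mathbb{B}(\cdot,\xi_{\cdot})$ that get paired with horizontal components of the brackets. To pin down the unknown combinations $(\nabla_{\xi_r}\mathbb{B})(\xi_s,\xi_t)$, I would produce auxiliary equations by differentiating \eqref{intehxi21}, \eqref{intehxi31n}, \eqref{intehxi312} along a Reeb direction, applying the Ricci commutation formula to swap vertical and horizontal derivatives, and taking horizontal traces. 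The scalar derivatives $\xi_s(tr\,L)$ that appear will be eliminated using \eqref{ricvert1} combined with Lemma~\ref{vertric-ta}, and any horizontal differentials $d(tr\,L)$ on $H$ come under control via \eqref{div} and \eqref{bes}.

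Third, assembling these relations yields a linear system in the nine a priori independent quantities $(\nabla_{\xi_r}\mathbb{B})(\xi_s,\xi_t)$, which I would solve by cyclic manipulation as in the derivation of \eqref{nn=14}--\eqref{nn=110}. The symmetry $\mathbb{B}(\xi_s,\xi_t)=\mathbb{B}(\xi_t,\xi_s)$ from Lemma~\ref{bijn=1} reduces the count. Solving and matching gives \eqref{intehxi41}. The second identity \eqref{intehxi441}, being the degenerate case of \eqref{intehxi41} with the two indices equal to $j$, follows either by specialization (after verifying the coefficients, using symmetry) or by running the same Bianchi/Ricci procedure with the choice $B=C=\xi_j$ and simplifying.

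The main obstacle will be carefully tracking the horizontal part of $T(\xi_i,\xi_j)$: this is the essential new ingredient compared to Cases 4--5, and it is responsible for coupling the vertical-vertical derivatives of $\mathbb{B}$ to the nonlinear $\mathbb{B}\otimes\mathbb{B}$ expressions appearing on the right-hand side of \eqref{intehxi41}. Showing that these contributions assemble exactly into the prescribed combinations $\mathbb{B}(e_a,\xi_i)\mathbb{B}(I_j e_a,\xi_k)$, etc., with the correct $(tr\,L)$-coefficients, will require invoking the previously-established horizontal integrability condition \eqref{intexih11} to eliminate leftover $(\nabla_X\mathbb{B})(\xi_s,\xi_t)$ terms and will demand the same delicate coefficient bookkeeping already encountered in \eqref{nn=110}.
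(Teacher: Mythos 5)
Your outline does not reproduce the mechanism that actually produces the quantities $(\nabla_{\xi_r}\mathbb{B})(\xi_s,\xi_t)$, and the two tools you put at the centre of the argument are, respectively, insufficient and misdirected. The paper's proof does not use the fully vertical second Bianchi identity at all in this case: it differentiates the already-established Case~5 conditions \eqref{intehxi31n} and \eqref{intehxi312} along a \emph{horizontal} direction, sets the free horizontal argument equal to $I_ke_a$ and traces over an adapted basis (equation \eqref{int60}). The point of this manoeuvre is that the antisymmetrized horizontal Hessian $(\nabla^2_{e_a,I_ke_a}-\nabla^2_{I_ke_a,e_a})\mathbb{B}(\xi_i,\xi_j)$ picks up, through the torsion term $T(e_a,I_ke_a)\propto\xi_k$ in the Ricci identity, exactly the first vertical derivative $-4n(\nabla_{\xi_k}\mathbb{B})(\xi_i,\xi_j)$ (see \eqref{int601}), while the commutator in $(\nabla^2_{e_a,\xi_i}\mathbb{B})(I_ke_a,\xi_j)$ together with the trace formula \eqref{bst} produces $4n(\nabla_{\xi_i}\mathbb{B})(\xi_j,\xi_k)$ (see \eqref{int602}); subtracting gives \eqref{e:c6.1}, and the remaining terms are evaluated via \eqref{comput3}, Lemma~\ref{l:rho-ta}, \eqref{intexih11} and \eqref{inte1}. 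Your proposal instead differentiates the earlier conditions along a Reeb direction, which yields second-order vertical (or mixed vertical-horizontal) derivatives of $\mathbb{B}$ that do not close up into the first-order vertical system you need; and after a vertical differentiation of \eqref{intehxi21} or \eqref{intehxi31n} there is no free pair of horizontal slots left over which to take the horizontal trace you invoke.

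The fully vertical Bianchi identity is also too weak to carry the argument. For fixed $s$ and the cyclic triple $(\xi_i,\xi_k,\xi_j)$ it gives a single scalar relation among the cyclic sum of $(\nabla_{\xi_\cdot}\rho_s)(\xi_\cdot,\xi_\cdot)$, i.e.\ three equations in total, against the nine independent antisymmetrized combinations appearing in \eqref{intehxi41} and \eqref{intehxi441}; and with a repeated vertical argument ($B=C=\xi_j$) the cyclic sum of $(\nabla_AR)(B,C,\cdot,\cdot)$ vanishes identically by the antisymmetry of $R$ in its first two slots, so this route gives no information whatsoever about \eqref{intehxi441}. Finally, \eqref{intehxi441} is not a "degenerate case" of \eqref{intehxi41}: in \eqref{intehxi41} the triple $(i,j,k)$ is a cyclic permutation of distinct indices, so one cannot specialize two of them to be equal, and the right-hand sides of the two conditions have different structure (three bilinear $\mathbb{B}\otimes\mathbb{B}$ terms with a three-term $(tr\,L)$-combination versus a single $3\mathbb{B}\otimes\mathbb{B}$ term with a single $(tr\,L)\mathbb{B}(\xi_i,\xi_j)$ term). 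The paper obtains \eqref{intehxi441} by running the same horizontal differentiate-and-trace argument starting from \eqref{intehxi312} instead of \eqref{intehxi31n}; some version of that step is unavoidable and is missing from your plan.
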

\begin{proof}
Differentiate
\eqref{intehxi31n} and take the corresponding trace to get
\begin{multline}
(\nabla _{e_{a},\xi _{i}}^{2}\mathbb{B})(I_{k}e_{a},\xi
_{j})+(\nabla
_{e_{a},I_{k}e_{a}}^{2}\mathbb{B})(\xi _{i},\xi _{j})=  \label{int60} \\
2(\nabla _{e_{b}}L)(I_{k}e_{b},I_{j}e_{a})\mathbb{B}(e_{a},\xi
_{i})+2L(I_{k}e_{b},I_{j}e_{a})(\nabla _{e_{b}}\mathbb{B})(e_{a},\xi _{i}) \\
-(\nabla _{e_{b}}T)(\xi _{i},I_{k}e_{b},e_{a})\mathbb{B}(e_{a},\xi
_{j})-T(\xi _{i},I_{k}e_{b},e_{a})(\nabla
_{e_{b}}\mathbb{B})(e_{a},\xi _{j})
\\
-\frac{\epsilon_k}{2n}d(tr\,L)(e_{a})\mathbb{B}(I_{k}e_{a},\xi _{k})-\frac{\epsilon_k}{2n}%
(tr\,L)(\nabla _{e_{a}}\mathbb{B})(I_{k}e_{a},\xi _{k}).
\end{multline}%
On the other hand, the Ricci identities, \eqref{bst}, \eqref{rhov} and %
\eqref{ricis} yield
\begin{gather} \label{int601}
(\nabla _{e_{a},I_{k}e_{a}}^{2}\mathbb{B})(\xi _{i},\xi
_{j})=-4n(\nabla _{\xi _{k}}\mathbb{B})(\xi _{i},\xi
_{j})-4\epsilon_j(tr\,L)\mathbb{B}(\xi _{j},\xi _{j})+4\epsilon_i(tr\,L)\mathbb{B}(\xi
_{i},\xi _{i});\\ \label{int602}
(\nabla _{e_{a},\xi _{i}}^{2}\mathbb{B})(I_{k}e_{a},\xi
_{j})
=4n(\nabla _{\xi _{i}}\mathbb{B})(\xi _{j},\xi _{k})-2(\nabla _{\xi
_{i}}L)(e_{a},e_{b})L(I_{j}e_{a},I_{k}e_{b})+4n\zeta _{k}(\xi _{i},e_{a})%
\mathbb{B}(e_{a},\xi _{j}) \\\nonumber
+2\epsilon_j\rho _{i}(e_{a},\xi _{i})\mathbb{B}(I_{k}e_{a},\xi _{k})-2\epsilon_j\rho
_{k}(e_{a},\xi _{i})\mathbb{B}(I_{k}e_{a},\xi _{i})+T(\xi
_{i},e_{a},e_{b})(\nabla _{e_{b}}\mathbb{B})(I_{k}e_{a},\xi _{j})
\end{gather}
Substituting \eqref{int601} and \eqref{int602} in \eqref{int60} we obtain
\begin{multline}\label{e:c6.1}
4n\Big[(\nabla _{\xi _{i}}\mathbb{B})(\xi _{j},\xi _{k})-(\nabla _{\xi _{k}}%
\mathbb{B})(\xi _{i},\xi _{j})\Big]\\
= 2(\nabla _{e_{b}}L)(I_{k}e_{b},I_{j}e_{a})\mathbb{B}(e_{a},\xi
_{i}) + 2\Big[(\nabla _{e_{b}}\mathbb{B})(e_{a},\xi _{i})+(\nabla
_{\xi
_{i}}L)(e_{b},e_{a})\Big]L(I_{k}e_{b},I_{j}e_{a})\\-\mathbb{B}(e_{a},\xi
_{j}) \Big[4n\zeta _{k}(\xi _{i},e_{a})+(\nabla _{e_{b}}T)(\xi
_{i},I_{k}e_{b},e_{a})\Big]-\Big[2\epsilon_j\rho _{i}(e_{a},\xi
_{i})+\frac{\epsilon_k}{2n}d(tr\,L)(e_{a})\Big] \mathbb{B}(I_{k}e_{a},\xi
_{k})\\+2\epsilon_j\rho _{k}(e_{a},\xi _{i})\mathbb{B}(I_{k}e_{a},\xi _{i})
+T(\xi_{i},I_{k}e_{a},e_{b})\Big[(\nabla
_{e_{b}}\mathbb{B})(e_{a},\xi _{j})- (\nabla
_{e_{a}}\mathbb{B})(e_{b},\xi _{j})\Big]
\\
-\frac{\epsilon_k}{2n}%
(tr\,L)(\nabla _{e_{a}}\mathbb{B})(I_{k}e_{a},\xi _{k})+4\epsilon_j(tr\,L)\mathbb{B}%
(\xi _{j},\xi _{j})-4\epsilon_i(tr\,L)\mathbb{B}(\xi _{i},\xi _{i}).
\end{multline}%
 We find with the help
of \eqref{rverg}, the symmetry of $L$ and 
\eqref{bes} that
\begin{multline*}
4n\zeta _{k}(\xi _{i},e_{a})=(4n+1)\mathbb{B}(I_{k}e_{a},\xi
_{i})+\epsilon_j\mathbb{B}(e_{a},\xi _{j})+\mathbb{B }(I_{i}e_{a},\xi
_{k})-\frac{\epsilon_j}{4n}d(trL)(I_{j}e_{a})\\
 -\frac{1}{4}\Big[( \nabla_{e_{b}}
L)(I_{i}e_{a},I_{k}e_{b})-\epsilon_j(\nabla_{e_{b}}
L)(e_{a},I_{j}e_{b})+(\nabla_{e_b}
L)(I_{k}e_{a},I_{i}e_{b})-\epsilon_j(\nabla_{e_b}
L)(I_{j}e_{a},e_{b})\Big]  .
\end{multline*}

It follows from \eqref{t1} that
\begin{multline*}
(\nabla_{e_b}T)(\xi _{i},I_{k}e_{b},e_{a}) =-\frac14\Big[\epsilon_j(\nabla_{e_b}L)(I_{j}{ e_{b}},{e_{a}})
+3(\nabla_{e_b}L)({I_{k}e_{b}},I_{i}{e_{a}})+\epsilon_j (\nabla_{e_b}
L)({e_{b}},I_{j}e_{a})\\
-(\nabla_{e_b}L)(I_{i}e_{b},I_{k}e_{a})
-\frac{\epsilon_j}{n}d(trL)(I_{j}e_{a})\Big].
\end{multline*}
The sum of the last two equalities  yields
\begin{equation}\label{comput3}
{4n\zeta }_{k}{(\xi }_{i}{,e}_{a}{)+(\nabla_{e_b}T)(\xi }_{i}{,I}_{k}{e}_{b}{,e}_{a})
=4nB(I_{k}{e}_{a},\xi_i)-4nB(I_{i}{e}_{a},{\xi}_{k}).
\end{equation}
Finally, combining \eqref{e:c6.1}, \eqref{comput3}, Lemma \ref{l:rho-ta},
\eqref{intexih11}, \eqref{inte1} with 
$L(e_{b},I_{s}e_{b})=0$ leads to a series of standart calculations, which at the end imply  \eqref{intehxi41}.

The other integrability condition in this case, \eqref{intehxi441},
can be obtained similarly using \eqref{intehxi312} and the Ricci
identities. The proof of Theorem~\ref{main2} is completed.
\end{proof}

\end{document}